\newtheorem{thm}{Theorem}[section]
\newtheorem*{thm*}{Theorem}
\newtheorem{lem}[thm]{Lemma}
\newtheorem{cor}[thm]{Corollary}
\newtheorem{defn}[thm]{Definition}
\newtheorem{rem}[thm]{Remark}
\newcommand{\A}{\mathcal{A}}
\newcommand{\F}{\mathfrak{F}}
\newcommand{\R}{\mathbb{R}}
\newcommand{\Z}{\mathbb{Z}}
\newcommand{\C}{\mathbb{C}}
\newcommand{\N}{\mathbb{N}}
\renewcommand{\S}{\mathcal{S}}
\newcommand{\T}{\mathcal{T}}
\renewcommand{\SS}{\mathbb{S}}
\renewcommand{\Re}{\mathop{\mathrm{Re}}}
\newcommand{\Orb}{\mathop{\mathrm{Orb}}}
\renewcommand{\H}{\mathcal{H}}
\newcommand{\z}{\mathcal{z}}
\newcommand{\CC}{\mathcal{C}}
\newcommand{\bA}{\bar{\mathcal{A}}}
\begin{document}

\title{Characterization of minimal sequences associated with self-similar interval exchange maps}

\author{Milton Cobo}
\address{Departamento de Matem\'atica, Universidade Federal do Esp\'{\i}rito Santo, Av. Fernando Ferrari 514, Goiabeiras, Vit\'oria, Brasil.} \email{milton.e.cobo@gmail.com}

\author{Rodolfo Guti\'errez-Romo}
\address{Departamento de Ingenier\'{\i}a
Matem\'atica and Centro de Modelamiento Ma\-te\-m\'a\-ti\-co, CNRS-UMI 2807, Universidad de Chile, Beauchef 851, Santiago,
Chile.}\email{rgutierrez@dim.uchile.cl}

\author{Alejandro Maass}
\address{Departamento de Ingenier\'{\i}a
Matem\'atica and Centro de Modelamiento Ma\-te\-m\'a\-ti\-co, CNRS-UMI 2807, Universidad de Chile, Beauchef 851, Santiago,
Chile.}\email{amaass@dim.uchile.cl}

\begin{abstract}
The construction of affine interval exchange maps with wandering intervals that are semi-conjugate to a given self-similar interval exchange map is strongly related to the existence of the so called minimal sequences associated with local potentials, which are certain elements of the substitution subshift arising from the given interval exchange map. In this article, under the condition called unique representation property, we characterize such minimal sequences for potentials coming from non-real eigenvalues of the substitution matrix.
We also give conditions on the slopes of the affine extensions of a self-similar interval exchange map that determine whether it exhibits a wandering interval or not.
\end{abstract}

\maketitle
\markboth{Milton Cobo, Rodolfo Guti\'errez-Romo, Alejandro
Maass}{Characterization of minimal sequences associated with self-similar i.e.m.}

\section{Introduction}\label{sec:introduction}
Let $I=[0,1)$ and $\A$ be a finite alphabet. 
An \emph{interval exchange map} (i.e.m.)\ is a bijective map $T\colon I\to I$ such that for a partition of 
$I$ by intervals $(I_a; a \in \A)$ of length $(|I_a|; a \in \A)$ and a vector $\delta=(\delta_a;a\in \A) \in\R^\A$ we have $T(t)= t+\delta_a$ if $t\in I_a$. In a similar way, an \emph{affine interval exchange map} (affine i.e.m.)\ is a bijective map 
$f\colon I\to I$ such that for a vector $\ell=(\ell_a; a\in \A) \in \R^\A$ with positive entries and a vector $\mathcal{d}=(\mathcal{d}_a; a\in \A) \in \R^\A$ we have $f(t)= \ell_a t + \mathcal{d}_a$ if $t\in I_a$. The vector $\ell$ is called the \emph{slope vector} of $f$.
An i.e.m.\ $T$ is self-similar if the first return map of $T$ to a proper interval $[0,\alpha) \subseteq I$ is, up to rescaling, equal to $T$. 

An affine i.e.m.\ $f$ is \emph{semi-conjugate} to an i.e.m.\ $T$ if there exists a continuous, surjective and non-decreasing map $ h\colon I \to I$ such that $h \circ f = T\circ h$. 
We refer to $f$ as an \textit{affine extension of $T$}. 
It can be visualized as an ``affine perturbation'' of $T$ in the sense that it can be obtained from the graph of $T$ by perturbing the slopes.

Given an i.e.m.\ $T$, the existence of an affine i.e.m.\ $f$ semi-conjugate to $T$ and having wandering intervals has been studied in several works during the last twenty years. As was established in \cite{yoccoz-wandering}, this situation is generic in the space of parameters describing interval exchange maps, although there are some restrictions on the possible slope vectors $\ell$ ($\log\ell=(\log\ell_a;a\in \A)$ should expand at a rate given by the second largest Lyapunov exponent of the associated Rauzy--Veech--Zorich cocycle).

Aside from \cite{yoccoz-wandering}, most results concern self-similar i.e.m.'s. In this context, 
the pioneering work \cite{cam-gut} established that there exists an affine i.e.m.\ with slope vector $\ell$ that is semi-conjugate to the i.e.m.\ $T$ if and only if the length vector $\lambda = (|I_a|; a \in \A)$ of $T$ is orthogonal to 
$\log\ell$. This condition is also equivalent to the fact that $\log\ell$ is generated by eigenvectors different from the Perron--Frobenius eigenvector of the matrix $M$ associated with $T$. In subsequent works \cite{cam-gut}, \cite{cobo}, \cite{persistence}, \cite{bressaud-deviation} and \cite{CGM}, the existence of such an affine i.e.m.\ having wandering intervals is shown to be related to the spectral properties of $M$. 

More precisely, it was proved in \cite{persistence} that, if $M$ admits a real eigenvalue $\beta > 1$ 
different from the Perron--Frobenius eigenvalue, but Galois conjugate to it, 
and an associated eigenvector $\gamma=(\gamma_a; a\in \A)$, then there exists an affine i.e.m.\ with slope vector $\ell=(\exp(-\gamma_a);a\in \A)$ which is semi-conjugate to $T$ and has wandering intervals. Then, in \cite{CGM} this result was extended to the case of a non-real eigenvalue $\beta$ with $|\beta|>1$ such that $\beta/|\beta|$ is not a root of unity and under the so called unique representation property for $\beta$ and some associated eigenvector $\Gamma$ (see Definition \ref{def:uniquerep}). More precisely, it was proved that for almost all complex 
eigenvectors $\gamma$ in the complex vector space generated by $\Gamma$ there exists an affine i.e.m.\ with slope vector 
$(\exp(-\Re(\gamma_a)); a\in \A)$ that is semi-conjugate to $T$ and has a wandering interval. 
When $\log\ell$ is an eigenvector of $M$ associated with $\beta$ and $|\beta| \leq 1$, then 
any affine i.e.m.\ which is semi-conjugate to $T$ is indeed conjugate and, therefore, does not have wandering intervals (see \cite{cam-gut}, \cite{cobo} and \cite{bressaud-deviation}).

The construction of affine perturbations of the i.e.m.\ $T$ with wandering intervals is somehow difficult. 
The strategy followed in \cite{persistence} and \cite{CGM}, when $T$ is self-similar, is the one proposed by Camelier and Guti\'errez in \cite{cam-gut}.  
It consists of ``blowing up'', \emph{\`a la Denjoy}, the orbits of specific points of the interval called \emph{distinguished points} of a complex vector $\gamma \in \C^\A$. The set of orbits of distinguished points is finite for each $\gamma$ (see \cite{cobothesis} and \cite{yoccoz-wandering}). These points 
are intimately related to the so called \emph{minimal sequences}
of the substitutive subshift $\Omega_\sigma$ associated with the self-similar i.e.m.\ $T$. More precisely, given $\omega \in \Omega_\sigma$ and a complex vector $\gamma \in \C^\A$, define $\gamma_n(\omega) = \gamma_{\omega_0} + \dotsb + \gamma_{\omega_{n-1}}$ and $\gamma_{-n}(\omega) = -(\gamma_{\omega_{-n}} + \dotsb + \gamma_{\omega_{-1}})$ for $n \geq 0$. The sequence $\omega$ is said to be \emph{minimal for $\gamma$} if 
$\Re(\gamma_n(\omega)) \geq 0$ for all $n \in \Z$. Then, the main technical step in the strategy devised in \cite{cam-gut} requires a sequence $\omega \in \Omega_\sigma$ for a complex vector $\gamma \in \C^\A$ such that the series $\sum_{n \in \Z} \exp(-\Re(\gamma_n(\omega)))$ is convergent. A necessary condition is that $\omega$ is minimal (up to a shift) for $\gamma$. 
Conversely, the main result \cite{CGM} states conditions under which minimal sequences always correspond to itineraries of distinguished points with respect to $(I_a; a \in \A)$. In the aforementioned works, even if minimal sequences are constructed from eigenvectors $\gamma$ associated with particular choices of eigenvalues $\beta$ as described before, very little is known about their nature, besides their existence. 

We think that minimal sequences are interesting in their own. In particular, they are related to the extreme points in the boundary of some ``fractal'' sets associated with the expansive eigenspaces of integer matrices arising from substitutions. These fractals sets were first introduced by Dumont and Thomas in \cite{dumont-thomas} to study numeration systems associated with substitutions. They are, in some sense, ``dual'' to the classical Rauzy fractals, which are associated with the contractive eigenspaces substitution matrices. They are also studied in \cite{geometricalmodels} in the particular case of the cubic Arnoux--Yoccoz map. Besides these two works, very little is known about them and understanding minimal sequences can shed light on such fractal sets.

In this article we characterize the set of minimal sequences and provide a method to compute them assuming the same hypotheses as in \cite{CGM}. That is, $\beta$ is a non-real eigenvalue of the matrix associated with the self-similar i.e.m.\ $T$ with $|\beta|>1$ such that $\beta/|\beta|$ is not a root of unity and under the unique representation property for $\beta$ and some associated eigenvector $\Gamma$. 
In Theorems \ref{thm:enter minimal} and \ref{thm:minimal points}, we state that minimal sequences can be obtained iterating another map $H$ that turns out to be conjugate to an i.e.m.\ in its minimal components. This map is in fact the main novelty of this article that we think can play an interesting role in studying minimal sequences associated with general substitutive subshifts or can be extended to study minimal sequences associated with more general minimal subshifts, such as linearly recurrent systems. Corollary \ref{cor:finite min points} states that for almost every eigenvector $\gamma$ in the complex vector space generated by $\Gamma$ the set of minimal sequences is finite. 
Finally, in Theorems~\ref{thm:no-wandering} and \ref{thm:no_conjugacy}, we give conditions on the eigenvector $\gamma$ in the complex space generated by $\Gamma$ that determine whether the affine extensions of $T$ with slope vector $\ell=\exp(-\Re(\gamma))$ have a wandering interval or not. 

We illustrate our results in the cubic Arnoux--Yoccoz map. We prove that for almost every complex eigenvector as before there are exactly two orbits of minimal sequences (and therefore exactly two orbits of distinguished points). Interestingly, the construction along the different minimal sequences produce different affine i.e.m.'s with different wandering intervals but the same slope vector. This shows that more than one affine i.e.m.\ with the same slope vector can be semi-conjugate to the cubic Arnoux--Yoccoz map. Since it was remarked at the end of Section 3.7.2 of \cite{yoccoz-wandering} that almost all i.e.m.'s are expected to have only one orbit of distinguished points, to our knowledge such examples are new.

In the next section we introduce the necessary background. We state our main results in Section \ref{sec:the main results}. Section \ref{sec:Extreme points} is devoted to presenting the main technical consequences of our hypotheses. The map allowing to characterize minimal sequences is defined in Section \ref{sec:the map H} together with its main properties. Finally, our main results are proved in Section \ref{sec:proofs}. To illustrate our main results, the cubic Arnoux--Yoccoz map is studied in Section \ref{sec:example} and an associated Appendix. 

\section{Background and Preliminaries}\label{sec:preliminaries}

\subsection{Self-similar i.e.m.'s}\label{sec:basic facts}
Let $\A$ be a finite alphabet and 
$T\colon [0,1)\mapsto [0,1) $ be an i.e.m.\ exchanging the intervals of the partition $(I_a; a\in\A)$ of $[0, 1)$, \emph{i.e.}, $T(t) = t+\delta_a$ if $t\in I_a$, where $\delta\in\R^\A$ is the translation vector. We suppose that $T$ is \emph{self-similar} on the interval $[0,\alpha)$ with 
$\alpha<1$. That is, up to rescaling, the induced map on $[0,\alpha)$ is equal to $T$. 
Then $T$ is uniquely ergodic and minimal (every orbit of $T$ is dense in $[0,1)$).
For more details on uniquely ergodic i.e.m.'s see \cite{veech78}.

For each $a \in \A$, define the interval $I_a^{(1)} = \alpha I_a\subseteq [0,\alpha)$ and denote by $R$ the \emph{renormalization matrix} given by $R_{a,b} = |\{ 0 \leq k \leq r_b - 1; T^k(I^{(1)}_b) \subseteq I_a\}|$, where $r_b$ is the first return time of $I^{(1)}_b$ to $[0, \alpha)$. By the minimality of $T$, some power of $R$ is a positive matrix. We have that $\alpha^{-1} > 1$ is its Perron--Frobenius eigenvalue and it is easy to see that the vector of lengths 
$\lambda = (|I^{(1)}_a|;a\in\A)$ is an eigenvector of $R$ associated with $\alpha^{-1}$. Also, the translation vector $\delta$ is an eigenvector of the transpose matrix $M=R^t$ associated with $\alpha$. 

\subsection{Substitution subshifts and minimal sequences}
Let $\A$ be a finite set or alphabet and let $\A^*$ be the set of all words in $\A$. For $w\in\A^*$, $|w|$ denotes its length, \emph{i.e.}, the number of letters in $w$.
The empty word is denoted by $\varepsilon$.

A substitution is a map $\sigma \colon \A \to \A^*\setminus \{\varepsilon\}$. 
It naturally extends to the set of two-sided sequences $\A^\Z$ by concatenation. That is, for
$\omega=(\omega_m)_{m\in \Z} \in \A^{\Z }$ the extension is given by 
$$
\sigma(\omega)=\ldots{} \sigma(\omega_{-2})\sigma(\omega_{-1}) \cdot \sigma(\omega_0)\sigma(\omega_1) \ldots{} , 
$$
where the central dot separates negative and nonnegative coordinates. 
A further natural convention is that $\sigma(\varepsilon) = \varepsilon$.

We define the matrix $M^\sigma$ associated with $\sigma$ by: the entry $M^\sigma_{a,b}$ is the number of times the letter $b$ appears in $\sigma(a)$ for any $a,b \in \A$. 
The substitution is said to be primitive if a power of $M^{\sigma}$ is strictly positive. 
This means that for some $n \geq 0$ any letter in $\A$ appears in the $n$-th iterate of the substitution of any other letter in $\A$.

Let $\Omega_\sigma \subseteq \A^\Z$ be the subshift defined from $\sigma$. That is, $\omega \in \Omega_\sigma$ if and only if any subword of $\omega$ is a subword of $\sigma^n(a)$ for some integer $n \geq 0$ and $a \in \A$. We denote by $ S $ the left shift in $\Omega_\sigma$.
We call $(\Omega_\sigma, S)$ the substitution subshift associated with $\sigma$. This subshift is minimal whenever $\sigma$ is primitive.

If $\sigma$ is primitive, by the recognizability property (see \cite{mosse}), given a point $\omega \in \Omega_\sigma$ there exists a unique sequence $(p_m,c_m,s_m)_{m\geq 0} \in (\A^* \times \A \times \A^*)^\N$ such that for each integer $m \geq 0$ we have $\sigma(c_{m+1})=p_m c_m s_m$ and
$$
{}\ldots \sigma^3(p_3) \sigma^2(p_2)\sigma^1(p_1) p_0 \cdot c_0 s_0 \sigma^1(s_1)\sigma^2(s_2)\sigma^3(s_3)\ldots{}
$$
is the central part of $\omega$, where the dot separates negative and nonnegative coordinates.
This sequence is called the prefix-suffix decomposition of $\omega$.

We refer to \cite{queff} and \cite{fogg} and references therein for the general theory of substitutions.

To a self-similar i.e.m.\ $T$ we associate a substitution subshift in the following way.
Given $t \in [0,1)$ we construct a symbolic sequence 
$\omega=(\omega_m)_{m\in \Z} \in \A^\Z$ by the rule 
$\omega_m=a$ if and only if $T^m(t) \in I_a$. The sequence $\omega$ is called the itinerary of $t$.
Let $\Omega_T \subseteq A^\Z$ be the closure of the set of sequences constructed in this way for every $t \in [0,1)$. Clearly the sequence associated with $T(t)$ corresponds to $S(\omega)$, where 
$S \colon\A^\Z\to \A^\Z$ is the left shift map. Moreover, it is classical that there exists 
a continuous and surjective map $\pi_T \colon \Omega_T \to [0,1)$ such that $T\circ \pi_T=\pi_T\circ S$. The map $\pi_T$ is invertible up to a countable set of points corresponding to the orbits of discontinuities of $T$. 
Since $T$ is self-similar, the restriction of $S$ to $\Omega_T$ is minimal and $\Omega_T$ is a subshift associated with a substitution.
The substitution is constructed in the following way: $\sigma(a)=w_0\ldots w_{r_a-1}$ if and only if $T^m(I^{(1)}_a) \subseteq I_{w_m}$ for every integer $0\leq m \leq r_a - 1$ and $a \in \A$. We then have that the matrix of the substitution $M^\sigma$ is the transpose of the renormalization matrix $R$ associated with $T$, \emph{i.e.}, $M^\sigma=M=R^t$. Furthermore, $\Omega_T=\Omega_\sigma$ and $\sigma$ is primitive. For details see \cite{cam-gut}. 

\subsection{\texorpdfstring{Minimal sequences for a vector $\bm{\gamma}$}{Minimal sequences for a vector gamma}}\label{sec:minimal sequences}

Let $\sigma$ be a primitive substitution in the alphabet $\A$. 
Given a vector $\gamma=(\gamma_a;a\in\A) \in\C^\A$ and a word $w = w_0 \ldots w_{n-1} \in \A^*$ we define $\gamma(w) = \gamma_{w_0} + \ldots \gamma_{w_{n-1}}$. For a sequence $\omega = (\omega_m)_{m\in\Z} \in \Omega_\sigma$ we define $\gamma_0(\omega) = 0$, 
$\gamma_n(\omega) = \gamma( \omega_0 \ldots \omega_{n - 1})$ and
$\gamma_{-n}(\omega) = -\gamma(\omega_{-n}\ldots\omega_{-1})$ for $n \geq 1$. It is easy to see that if $\gamma$ is an eigenvector of $M^\sigma$ associated with $\beta \in \C$, then for any integer $n\geq 0$, 
\begin{equation}\label{eq:gamma(sigma^n)} 
\gamma(\sigma^n(u)) = \beta^n \gamma(u).
\end{equation}

\begin{defn}\label{def:minimalsequences}
A sequence $\omega \in \Omega_\sigma$ is a \emph{minimal sequence} for the vector 
$\gamma \in \C^{\A}$ if $$\Re( \gamma_n(\omega))\geq 0 \text{ for all } n\in \Z.$$
\end{defn}

Assume $\sigma$ is the substitution associated with a self-similar i.e.m.\ $T$. We adopt all notations of Section \ref{sec:basic facts} and we assume $\beta$ is a non-real eigenvalue of $M =M^\sigma$ with $|\beta|>1$. 
In Lemma 4.4 in \cite{CGM} it is proved that:
\begin{lem}\label{lem:existence of minimal points}
For any eigenvector $\gamma$ of $M$ associated with $\beta$ there exist minimal sequences. 
\end{lem}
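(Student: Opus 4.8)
The plan is to build a minimal sequence directly by a compactness argument, producing points whose finite itineraries have all partial sums of $\Re(\gamma(\cdot))$ nonnegative. First I would observe that, since $\beta$ is non-real and $\gamma$ is an eigenvector of $M=M^\sigma$ associated with $\beta$, the self-similarity relation \eqref{eq:gamma(sigma^n)} gives $\gamma(\sigma^n(a)) = \beta^n \gamma_a$ for every $a \in \A$ and $n \geq 0$. Because $\beta \notin \R$, the arguments $\arg(\beta^n \gamma_a)$ rotate, so in particular for each $a$ there are infinitely many $n$ with $\Re(\beta^n\gamma_a) > 0$ (indeed $\beta^n\gamma_a$ lies in any prescribed open half-plane for a positive density of $n$, unless $\gamma_a = 0$; and not all $\gamma_a$ vanish since $\gamma \neq 0$). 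The key structural fact I would extract is that, using the prefix-suffix decomposition, every word appearing in $\Omega_\sigma$ sits inside some $\sigma^n(a)$, and the partial sums of $\gamma$ along $\sigma^n(a)$ are controlled: writing $\sigma^n(a) = w_0\ldots w_{L-1}$, the $k$-th partial sum $\gamma(w_0\ldots w_{k-1})$ decomposes, via the prefix-suffix structure of the point whose itinerary begins at position $k$, as a finite sum $\sum_{j} \gamma(\sigma^{n_j}(p_j))$ with $n_j$ strictly decreasing; each term has modulus $\asymp |\beta|^{n_j}$, so the whole partial sum is dominated in modulus by the last few scales.

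Next I would fix the following construction. For $N \geq 1$, choose (using the rotation of $\arg(\beta^n \gamma_a)$ described above) an exponent $n$ and a letter $a$ such that $\Re(\beta^n \gamma_a)$ is large and positive, and consider the point $\omega^{(N)} \in \Omega_\sigma$ whose central part is obtained by placing $\sigma^n(a)$ so that the origin sits deep inside it — concretely, at a position $k_N$ such that both the prefix $w_0\ldots w_{k_N-1}$ and the suffix $w_{k_N}\ldots w_{L-1}$ are long (e.g.\ $k_N$ near the middle). The partial sums of $\Re(\gamma_m(\omega^{(N)}))$ for $|m|$ up to roughly the length of that prefix/suffix are then, up to a bounded error coming from the lower-order scales $|\beta|^{n'}$ with $n' < n$, equal to $\Re$ of a piece of $\beta^n \gamma_a$, hence nonnegative once the chosen term dominates. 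Passing to a subsequential limit $\omega = \lim_N \omega^{(N)}$ in the compact space $\Omega_\sigma$, the inequality $\Re(\gamma_m(\omega)) \geq 0$ holds for every fixed $m \in \Z$, since for $N$ large the relevant prefix/suffix of $\omega^{(N)}$ agrees with that of $\omega$ on the window $[-|m|, |m|]$ and the inequality there was already established (or can be fixed up by an arbitrarily small perturbation, i.e.\ by also letting the dominant scale $n \to \infty$). Hence $\omega$ is minimal for $\gamma$.

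I would expect the main obstacle to be making the error estimate uniform: showing that the ``lower-scale'' contributions $\sum_{n' < n}\gamma(\sigma^{n'}(p_{n'}))$ to every partial sum are genuinely of smaller order than the dominant term $\beta^n\gamma_a$, so that choosing $\Re(\beta^n\gamma_a)$ sufficiently large (relative to a geometric-series bound $\sum_{n'<n} C|\beta|^{n'} = O(|\beta|^{n})$) forces nonnegativity rather than merely large positivity at the top scale. This requires a clean bound of the form $|\gamma(w)| \leq C |\beta|^{m}$ for any prefix $w$ of $\sigma^m(a)$, uniformly in $a$ and the prefix, which follows from iterating $|\gamma(\sigma(u))| = |\beta|\,|\gamma(u)|$ together with $|\gamma(v)| \leq \sum_{b} (\text{number of }b\text{ in }v)\,|\gamma_b| \leq |v|\max_b|\gamma_b|$ on the innermost scale and the bounded depth of the prefix-suffix decomposition at each scale (the prefixes $p_{n'}$ are words in $\A$ of length $< \max_a|\sigma(a)|$). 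Once this bound is in hand, the constant ``sufficiently large'' can be realized because $\sup_n \Re(\beta^n\gamma_a)/|\beta|^n = |\gamma_a| > 0$ for at least one $a$, i.e.\ the top-scale term can be made to beat the geometric tail by any fixed ratio. Everything else — the compactness passage to the limit and the identification $\Omega_T = \Omega_\sigma$ — is standard and already recorded in Section \ref{sec:preliminaries}.
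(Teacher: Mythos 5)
Note first that the paper does not actually prove this lemma: it is quoted verbatim from Lemma 4.4 of \cite{CGM}, so I can only assess the proposal on its own terms rather than against the authors' argument.

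There is a genuine gap in the construction. Placing the origin ``near the middle'' of $\sigma^n(a)$ with $\Re(\beta^n\gamma_a)$ large and positive does not control the signs of the intermediate partial sums. Writing $\sigma^n(a)=w_0\ldots w_{L-1}$, for $0<m<L-k_N$ one has $\gamma_m(\omega^{(N)})=\gamma(w_0\ldots w_{k_N+m-1})-\gamma(w_0\ldots w_{k_N-1})$, a difference of two \emph{proper-prefix} sums of $\sigma^n(a)$; the term $\beta^n\gamma_a$ you want to ``dominate'' never enters this expression, since it is the sum over the \emph{whole} word and only appears at $m=L-k_N$ (and, with the opposite sign, at $m=-k_N$). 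Your own bound $|\gamma(\text{prefix})|=O(|\beta|^{n-1})$ shows the intermediate partial sums are of that order, but gives no sign; already $\gamma_1(\omega^{(N)})=\gamma_{w_{k_N}}$ can have negative real part, and this is a sign defect of fixed size, not something an ``arbitrarily small perturbation'' or sending $n\to\infty$ can repair. So the inequality you invoke before passing to the limit was never actually established.

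The construction that works is the one the paper later codifies in Definition~\ref{def:localminimal}: for each $n$ take $k_n$ to be the length of a proper prefix $u_n$ of $\sigma^n(a)$ \emph{minimizing} $\Re(\gamma(u_n))$. Then $\Re(\gamma_m(\omega^{(N)}))\geq 0$ for all $m$ in range is automatic, being a difference of a prefix sum and the minimal prefix sum. What remains, and this is where your rotation observation and the bound $|\gamma(w)|\lesssim |\beta|^m$ for prefixes $w$ of $\sigma^m(b)$ actually earn their keep, is to show that this minimizing position drifts away from both ends, i.e.\ $k_n\to\infty$ and $|\sigma^n(a)|-k_n\to\infty$ along a subsequence, so that a subsequential limit in the compact space $\Omega_\sigma$ is a genuine two-sided point. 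For $k_n\to\infty$ one uses that the minimal prefix sum tends to $-\infty$ (here the non-realness of $\beta$ together with $|\beta|>1$ is used, since $\Re(\beta^{n'}\gamma_b)$ takes arbitrarily negative values) while prefixes of bounded length have bounded $\gamma$-sum; for $|\sigma^n(a)|-k_n\to\infty$ one passes to a subsequence on which $\Re(\beta^n\gamma_a)$ is large and positive, so that a bounded-length suffix would force $\Re(\gamma(u_n))=\Re(\beta^n\gamma_a)-O(1)>0$, contradicting minimality because the empty prefix already gives $0$. Your compactness step and the identification $\Omega_T=\Omega_\sigma$ are fine once this is in place.
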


\subsection{Fractals associated with a self-similar i.e.m.\ and the unique representation property}\label{subsec: fractals associated with sigma and beta}
Let $T$ be a self-similar i.e.m.\ which is self-similar on the interval $[0,\alpha)$, and $\beta$ be an eigenvalue of $M$ with $|\beta| > 1$ such that $\beta/|\beta|$ is not a root of unity. 
Consider an eigenvector $\gamma$ of $M$ for $\beta$. Recall $\sigma$ is the substitution associated with $T$. 

Denote by $\bA_a$ the set of possible triples $(p,c,s) $ in $\A^* \times \A \times \A^*$ such that
$\sigma(a)=pcs$. Set $\bA = \bigcup_{a\in\A} \bA_a$, which we call the set of \emph{labels}. We define
$$\S = \{(p_m, c_m, s_m)_{m \geq 1}\in\bA^\N; (p_{m+1}, c_{m+1}, s_{m+1})\in\bA_{c_{m}}, m\geq 1 \}$$
and, for $a \in \A$,
$$\S_a = \{(p_m, c_m, s_m)_{m \geq 1}\in\S; (p_1,c_1,s_1)\in\bA_a \}.$$ 

For $x \in \S_a$ write $x = (p_m^x, c_m^x, s_m^x)_{m \geq 1}$.
We also consider the partition of $\S_a$ by the subsets 
$$\S_{a,(p,c,s)} = \{x\in\S_a; (p^x_1,c^x_1,s^x_1) = (p,c,s)\}$$
for $(p,c,s)\in\bA_a$. 

The next concepts depend on the choice of $\gamma$, nevertheless, to simplify notations we omit this dependence. 

For each $a\in\A$ and $n\geq 1$ we define maps $\z_a \colon \S_a \mapsto \C$ and 
$\z_a^{(n)}\colon \S_a \mapsto \C$ by
$$\z_a(x) = \sum_{m \geq 1} \beta^{-m} \gamma(p_m^x) \textrm{ and }\z^{(n)}_a(x) = \sum_{m = 1}^n \beta^{-m} \gamma(p_m^x).$$

Given $ a\in\A $ and $(p,c,s)\in\bA_a$ we consider the sets (referred hereafter as 
``the fractals'')
$$\F_a = \{ \z_a(x); x \in \S_a \} \quad\text{and}\quad \F_{a,(p,c,s)} = \{ \z_a(x); x \in \S_{a,(p,c,s)} \}.$$ 
We easily notice the decomposition: $\F_a = \bigcup_{(p,c,s)\in\bA_a} \F_{a,(p,c,s)}$.
We say that a sequence $x \in \S_a$ is a representation of $z \in \F_a$ if $\z_a(x)=z$.
We also consider $\F^{(n)}_{a} = \{ \z^{(n)}_a(x); x \in \S_a \}$ and $\F^{(n)}_{a,(p,c,s)} = \{ \z^{(n)}_a(x); x \in \S_{a,(p,c,s)} \}$for each $n\geq 1$.

\begin{defn}\label{def:extremepoints}
For each $ a\in\A $ we define the maps
$$v_a(\tau) = \min_{z \in \F_a} \Re(\tau z) \quad\text{and}\quad 
v_{a,(p,c,s)}(\tau) = \min_{z \in \F_{a,(p,c,s)}} \Re(\tau z),$$
where $\tau \in \SS^1$. Analogously, for $n \geq 1$,
$$v_a^{(n)}(\tau) = \min_{z \in \F^{(n)}_a} \Re(\tau z) \quad\text{and}\quad 
v_{a,(p,c,s)}^{(n)}(\tau) = \min_{z \in \F^{(n)}_{a,(p,c,s)}} \Re(\tau z).$$
\end{defn}
Notice that $v_{a,(p,c,s)}(\tau) \geq v_a(\tau)$ and $v_a(\tau) = \min_{(p,c,s) \in \bA_a} v_{a,(p,c,s)}(\tau)$. 

In Lemmas 5.3 and 7.2 in \cite{CGM} it is proved that:
\begin{lem}
\label{lem:continuity of v_a}
For every $a \in \A$, $v_a \colon \SS^1 \to \R$ is continuous and has lateral derivatives at each $\tau\in\SS^1$.
\end{lem}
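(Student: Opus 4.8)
The plan is to prove continuity and existence of lateral derivatives of $v_a$ simultaneously for all $a \in \A$ by exploiting the self-referential structure of the fractals $\F_a$. First I would establish the fundamental recursive identity: decomposing a sequence $x \in \S_a$ according to its first label $(p_1^x, c_1^x, s_1^x) = (p,c,s) \in \bA_a$ and then shifting, one obtains that $\z_a(x) = \beta^{-1}\gamma(p) + \beta^{-1} \z_{c}(\theta x)$, where $\theta$ denotes the shift on $\S$ and $\theta x \in \S_c$. Consequently $\F_{a,(p,c,s)} = \beta^{-1}\gamma(p) + \beta^{-1}\F_c$, and taking the real part of $\tau z$ with $\tau \in \SS^1$ and minimizing yields the key functional equation
\begin{equation*}
v_{a,(p,c,s)}(\tau) = \Re\bigl(\beta^{-1}\tau\,\gamma(p)\bigr) + |\beta|^{-1}\, v_c\bigl(\tau'\bigr),
\end{equation*}
where $\tau' = \beta^{-1}\tau/|\beta^{-1}| = \overline{\beta}\,\tau/|\beta| \in \SS^1$ is a fixed rotation of $\tau$ (note $\beta/|\beta|$ not being a root of unity is what was used elsewhere, but here the key point is simply that $\tau \mapsto \tau'$ is a rigid rotation of the circle). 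Combining with $v_a(\tau) = \min_{(p,c,s)\in\bA_a} v_{a,(p,c,s)}(\tau)$ gives
\begin{equation*}
v_a(\tau) = \min_{(p,c,s)\in\bA_a}\Bigl[\Re\bigl(\beta^{-1}\tau\,\gamma(p)\bigr) + |\beta|^{-1} v_c(\tau')\Bigr].
\end{equation*}

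Next I would set up a fixed-point argument in the Banach space $C(\SS^1)^\A$ of tuples $(f_a)_{a\in\A}$ of continuous functions on the circle, with the sup norm. The right-hand side above defines an operator $\Phi$ on this space: $(\Phi f)_a(\tau) = \min_{(p,c,s)\in\bA_a}[\Re(\beta^{-1}\tau\gamma(p)) + |\beta|^{-1} f_c(\tau')]$. Since $|\beta|^{-1} < 1$ and minima of uniformly continuous functions are continuous, $\Phi$ maps $C(\SS^1)^\A$ into itself and is a contraction with constant $|\beta|^{-1}$. Its unique fixed point is $(v_a)_{a\in\A}$: indeed, approximating $\z_a$ by $\z_a^{(n)}$ shows $v_a^{(n)} \to v_a$ uniformly (the tail $\sum_{m>n}\beta^{-m}\gamma(p_m^x)$ is bounded by $C|\beta|^{-n}$ uniformly in $x$), and $v_a^{(n)}$ satisfies the truncated version of the recursion with $v_c^{(n-1)}$ on the right, so passing to the limit identifies $(v_a)$ as the fixed point. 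In particular each $v_a$ is continuous.

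For the lateral derivatives, I would again use the fixed-point representation, now iterating: writing the circle multiplicatively as $\SS^1 = \{e^{i\psi}\}$, the rotation $\tau \mapsto \tau'$ is $\psi \mapsto \psi + \psi_0$ for a fixed $\psi_0$, and unrolling the recursion $N$ times expresses $v_a(e^{i\psi})$ as a finite sum of terms $|\beta|^{-k}\Re(\beta^{-1}e^{i(\psi+k\psi_0)}\gamma(p))$ — each a $C^\infty$ function of $\psi$ — combined under nested minimizations, plus a remainder of size $|\beta|^{-N}$. Since each $v_a$ is a uniform limit of functions that are piecewise $C^1$, the hard part is controlling the one-sided differentiability uniformly through the infinite nesting of $\min$ operations. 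The right tool is that a minimum of finitely many functions each having lateral derivatives again has lateral derivatives (the left derivative of $\min_j g_j$ at a point is $\min$ of the left derivatives over the indices $j$ achieving the minimum value there), and that this "piecewise $C^1$ with one-sided derivatives" class is closed under uniform limits \emph{provided} one has geometric control of the difference quotients — which the $|\beta|^{-1}$-contraction supplies. Concretely, I would show by the same contraction principle applied to difference quotients $\tau \mapsto (v_a(\tau\rho) - v_a(\tau))/\arg\rho$ (for $\rho \in \SS^1$ near $1$ on a fixed side) that these converge as $\rho \to 1^{\pm}$, uniformly, to functions that are the lateral derivatives. The main obstacle is precisely this last step: ensuring that the interchange of the limit defining the lateral derivative with the infinite composition coming from the self-similarity is legitimate, which requires a careful equicontinuity/monotonicity estimate on the difference quotients rather than a routine computation.
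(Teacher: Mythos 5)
The paper does not prove this lemma itself; it is quoted from Lemmas 5.3 and 7.2 of \cite{CGM}, so the proposal has to stand on its own. Your recursive identity $v_{a,(p,c,s)}(\tau) = \Re(\beta^{-1}\tau\,\gamma(p)) + |\beta|^{-1}v_c(\beta_0^{-1}\tau)$ is correct and is indeed used implicitly elsewhere in the paper (it is the chain of equalities in the proof of Lemma~\ref{lem:intutitionH}). For continuity alone, though, the contraction apparatus is more than you need: since each $\F_a$ is compact, $|v_a(\tau)-v_a(\tau')|\le (\max_{z\in\F_a}|z|)\,|\tau-\tau'|$, so $v_a$ is Lipschitz, hence continuous.

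The genuine gap is in the lateral-derivatives part, which you yourself flag as ``the main obstacle''. A minor issue first: the \emph{left} derivative of $\min_j g_j$ at a point with active index set $J$ is the \emph{maximum}, not the minimum, of the left derivatives over $j\in J$ --- the minimum becomes a maximum when you divide the one-sided difference quotient by $s<0$. The substantive issue is that the assertion that the class of functions with one-sided derivatives ``is closed under uniform limits provided one has geometric control of the difference quotients'' is precisely what needs proof; uniform convergence alone certainly does not preserve one-sided differentiability. Your contraction does supply the needed control, but you stop short of running it. To finish along your lines, set $\Delta_s v_a(\tau)=s^{-1}\bigl(v_a(\tau e^{is})-v_a(\tau)\bigr)$ and $\Omega=\sup_{a,\tau}\bigl(\limsup_{s\to 0^+}\Delta_s v_a(\tau)-\liminf_{s\to 0^+}\Delta_s v_a(\tau)\bigr)$; the Lipschitz bound gives $\Omega<\infty$, and for small $s>0$ the recursion localizes to the active set $J_a(\tau)$, giving $\Delta_s v_a(\tau)=\min_{j\in J_a(\tau)}\bigl[\Delta_s g_{a,j}(\tau)+|\beta|^{-1}\Delta_s v_{c_j}(\beta_0^{-1}\tau)\bigr]$; since the $g_{a,j}$ are smooth and the oscillation of a finite minimum is at most the maximum of the oscillations, $\Omega\le|\beta|^{-1}\Omega$, so $\Omega=0$. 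Alternatively, and more directly, $-v_a(\tau)=\max_{z\in\F_a}\Re(-\tau z)$ is the support function of $\mathrm{conv}(\F_a)$ at $-\tau$; support functions of compact sets, being convex on $\R^2$, have one-sided directional derivatives everywhere, and restricting to $\SS^1$ preserves this. Either route fills the gap; as written, the proposal does not.
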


A point $z\in\F_a$ is called an extreme point for the direction 
$\tau\in\SS^1 $ if $v_a(\tau)=\Re(\tau z)$. The set of extreme points for the direction $\tau$ is written as $E_a(\tau)$. We also set $E_{a,(p,c,s)}(\tau) = E_a(\tau) \cap \F_{a, (p,c,s)}$.

The results of this article depend on the following hypothesis:

\begin{defn}\label{def:uniquerep}
We say that $T$ satisfies the unique representation property (u.r.p.)\ for $\beta$ and the eigenvector $\gamma$ if every extreme point of the associated fractals has a unique representation. 
That is, for any $a \in \A$ and any extreme point $z\in \F_a$ there exists a unique $x\in \S_a$ with $\z_a(x)=z$.

\end{defn}

The unique representation property implies in particular that extreme points of $\F_a$ do not belong to intersections $\F_{a,(p,c,s)}\cap \F_{a, (\bar p,\bar c,\bar s)}$ for distinct $(p,c,s), (\bar p,\bar c,\bar s) \in \bA_a$. In \cite{CGM} it is proved that this property holds for the cubic Arnoux--Yoccoz map.

\begin{figure}
\centering
\includegraphics[width=0.30\textwidth]{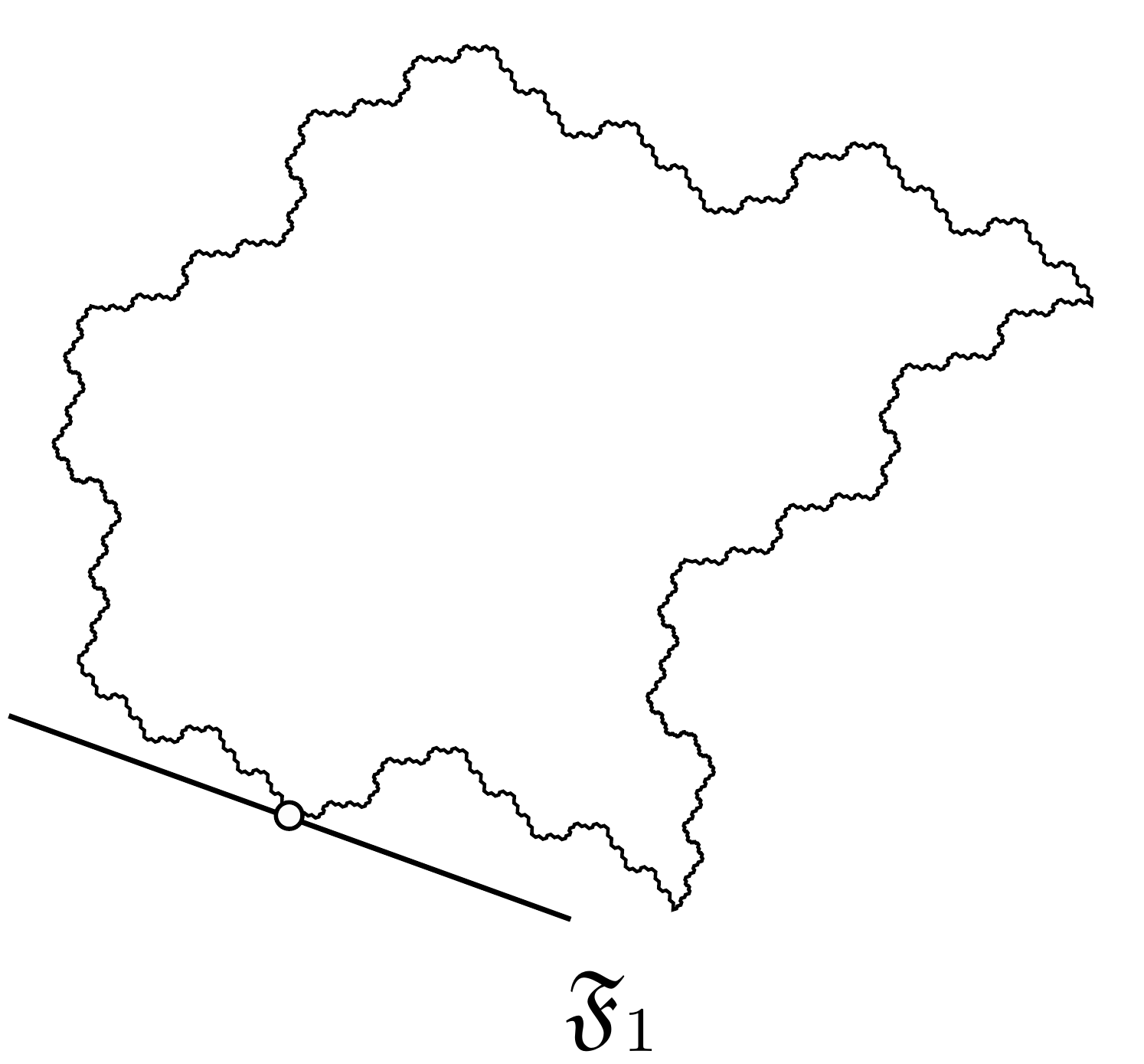}
\caption{The dot shows an extreme point of one of the Arnoux--Yoccoz fractals in the direction of the line (see Section \ref{sec:example}).}
\label{fig:extreme}
\end{figure}

\section{The main results}\label{sec:the main results}

Throughout the article we will assume the conditions of Section~\ref{sec:preliminaries}. Namely, $T$ is a self-similar i.e.m.\ on the interval $[0,\alpha)$, where $\alpha<1$. Recall that this implies that the associated symbolic system is generated by a substitution $\sigma$. We denote its matrix by $M$. We fix a non-real eigenvalue $\beta$ of $M$ satisfying that $|\beta| > 1$ and that $\beta_0 = \beta/|\beta|$ is not a root of unity, and an eigenvector $\Gamma$ for $\beta$. We assume that $T$ satisfies the u.r.p.\ for $\beta$ and $\Gamma$.

For $a\in\A$, we denote by $\Psi_a$ the set of directions $\tau \in \SS^1$ such that, for different
labels $(p,c,s)$ and $(\bar p,\bar c,\bar s)$ in $\bA_a$, we have $v_a(\tau)=v_{a,(p,c,s)}(\tau)=v_{a,(\bar p,\bar c,\bar s)}(\tau)$. This definition is slightly different from the one given in \cite{CGM}, but the two definitions coincide under the u.r.p. It will be proved in Lemma \ref{lem:psi finito} that the u.r.p.\ implies that $\Psi_a$ is finite for all $a \in \A$. 
We set $\Psi=\bigcup_{a\in\A} \Psi_a$.

Our main results depend on the following map.
Set $X = \SS^1\times \bA$ and let $H\colon X \to X$ be defined as
$H(\tau, (q, a, r)) = (\beta_0^{-1} \tau, (p, c, s))$ if $v_a(\tau)=v_{a,(p,c,s)}(\tau)$.
Clearly, the definition of $H$ is ambiguous if $\tau \in \Psi_a$, however, under the u.r.p., $H$ is well-defined and continuous except at finitely many points. The map $H$ will be extensively discussed in Section \ref{sec:the map H}. 
In particular, we will show that $H$ is conjugate to an \emph{interval translation map}, \emph{i.e.}, a piecewise isometry of an interval. These maps are different from interval exchange maps because they need not be bijective.

Our first result will be proved at the end of Section~\ref{sec:the map H}:

\begin{thm}\label{thm:H_minimal}
	Let $T$ be a self-similar i.e.m. Assume that $M$ has an eigenvalue $\beta$ with $|\beta|>1$ such that $\beta_0=\beta/|\beta|$ is not a root of unity, and that there exists an eigenvector $\Gamma$ for $\beta$ such that $T$ has the u.r.p.\ for $\beta$ and $\Gamma$. Then $H$ has finitely many minimal components and its restriction to each minimal component is an i.e.m.
\end{thm}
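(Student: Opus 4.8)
The plan is to analyze $H$ in two stages: first establish that $H$ is (piecewise) conjugate to an interval translation map, reducing the dynamical question to a well-understood setting; then use the theory of interval translation maps to extract finitely many minimal components on which the map is an i.e.m. For the first stage, I would fix $a \in \A$ and study the restriction of $H$ to the ``column'' $\SS^1 \times \bA_a$ — really just $\SS^1$, since the $\bA$-coordinate is determined by the others. Writing $\tau = e^{2\pi i \theta}$, multiplication by $\beta_0^{-1}$ is an irrational rotation of the circle parametrized by $\theta$ (irrational because $\beta_0$ is not a root of unity). The subtlety is that $H$ also records which label $(p,c,s) \in \bA_{c}$ the point lands in, and this is governed by the region of $\SS^1$ on which $v_a = v_{a,(p,c,s)}$. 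So I would first show, using the u.r.p.\ together with Lemma~\ref{lem:continuity of v_a} and the finiteness of $\Psi_a$ (Lemma~\ref{lem:psi finito}, cited as forthcoming), that for each $a$ the circle decomposes into finitely many arcs on each of which a single label achieves the minimum; on the overlaps (the finite set $\Psi$) the map is genuinely ambiguous but this is a measure-zero, finite set of bad points.

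Next I would assemble these arc decompositions across all $a \in \A$ into a single finite partition of $X = \SS^1 \times \bA$ into arcs (times singletons in the $\bA$-coordinate), on the interior of each of which $H$ acts as $\theta \mapsto \theta - \arg(\beta_0)/2\pi$ composed with a relabeling of the $\bA$-coordinate. Cutting the circle at a suitable point and unrolling, $H$ becomes a piecewise translation of a finite union of intervals — an interval translation map in the sense mentioned in the statement. The key structural input making this work cleanly is that the label $(p,c,s)$ output by $H$ at $(\tau,(q,a,r))$ depends only on $a$ and the arc containing $\tau$, \emph{not} on $(q,r)$; this is exactly what lets the dynamics on the $\bA$-coordinate be read off from a finite automaton and ensures $H$ genuinely lives on a finite union of circles cut into finitely many pieces.

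For the second stage I would invoke the structure theory of interval translation maps (Boshernitzan--Kornfeld and successors): any interval translation map has an attractor obtained as a decreasing intersection of its forward images, and on this attractor — when it is ``of finite type'', i.e., the intersection stabilizes — the map is conjugate to an i.e.m.\ on finitely many components, each minimal. I would argue finiteness of type here from the fact that the underlying rotation is by a single irrational angle on each circle factor, so the orbit closures are the whole circles, and the number of discontinuities cannot proliferate: each iterate introduces at most the preimages of the finitely many fixed breakpoints, but these all lie on rotation orbits of the same finitely many points, forcing stabilization. Restricting to each resulting minimal component gives a minimal, bijective piecewise isometry of an interval, i.e.\ an i.e.m., and the number of such components is finite, which is precisely the assertion.

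\smallskip
The main obstacle I anticipate is the second stage — rigorously ruling out an infinite-type interval translation map and pinning down the minimal components. Interval translation maps can in general have Cantor attractors and infinitely many components, so the argument must genuinely exploit the rigidity coming from $\beta_0^{-1}$ being a \emph{single} irrational rotation shared by all circle factors (as opposed to an arbitrary piecewise translation with incommensurable slopes). Concretely, the breakpoints of $H^n$ are all of the form $\beta_0^{-k}\tau_0$ for $\tau_0$ among the finitely many endpoints of the defining arcs and $0 \le k \le n$, and one must show the resulting partitions are eventually ``constant up to the dynamics'' so that the attractor is a finite union of arcs. A secondary, more technical point is handling the finitely many ambiguity points in $\Psi$: I expect these to be non-recurrent under $H$ (or to be absorbed harmlessly), but this needs a short argument, perhaps again using u.r.p.\ to show that the extreme points sitting over $\Psi$ cannot be revisited.
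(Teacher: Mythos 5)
Your first stage matches the paper closely: using Lemma~\ref{lem:psi finito} to show that, for each $a$, the circle decomposes into finitely many closed arcs $J_{a,(p,c,s)}$ with disjoint interiors (this is the paper's Lemma~\ref{lem:itm}), and then identifying $X = \SS^1 \times \bA$ with an interval so that $H$ becomes an aperiodic interval translation map. Your observation that the output label depends only on $a$ and the arc, not on $(q,r)$, is exactly right.

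Your second stage, however, has a genuine gap precisely at the point you yourself flag as the ``main obstacle.'' Two issues. First, a small conflation: the \emph{finiteness of the number} of minimal components of an aperiodic i.t.m.\ is automatic and does not require finite type --- the paper gets this directly from Theorem~2.4 of \cite{st} (Lemma~\ref{lem:finitecomponents}). What finite type (i.e.\ that the attractor $\bigcap_n H^n(X)$ stabilizes and is a finite union of intervals) is needed for is the stronger claim that each minimal component is a finite union of arcs on which $H$ is an i.e.m.\ --- which is what the theorem asserts. Second, and more seriously, your proposed mechanism for establishing finite type --- that the breakpoints of $H^n$ lie on finitely many rotation orbits ``forcing stabilization'' --- does not go through as stated. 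Since $\beta_0$ is not a root of unity, the points $\beta_0^{-k}\tau_0$ with $0 \le k \le n$ are all distinct, so the breakpoint set of $H^n$ strictly grows with $n$ and the induced partitions refine indefinitely; there is no stabilization at the level of partitions. What does stabilize is the sequence of images $H^n(X)$, and showing this requires a different idea. The paper's argument (Lemma~\ref{lem:finiteunion}) exploits the product structure of $X$ in a way your breakpoint count does not: given a minimal component $\hat J$ of the completed map $\hat H$, one considers the projections $\hat J_{(p,c,s)} \subseteq \SS^1$ of $\hat J \cap \hat I_{(p,c,s)}$; because the $\SS^1$-coordinate evolves by a \emph{single} irrational rotation and $\hat J$ is compact and $\hat H$-invariant, these closed sets must cover $\SS^1$, so by Baire one of them has nonempty interior. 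Minimality then lets one cover $\hat J$ by finitely many $\hat H$-iterates of that interval, and since $\hat H$ sends intervals to finite unions of intervals, $\hat J$ is a finite union of intervals of positive length. Once minimal components are unions of intervals, the i.e.m.\ claim is immediate: $H$ is surjective on a minimal component, hence (being an i.t.m.) injective, hence a piecewise isometric bijection of a finite union of intervals. You would need to replace your breakpoint-counting heuristic with some version of this covering argument.
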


We need the following definitions to state our results. In what follows $\llbracket \tau - \tau' \rrbracket$ is the natural distance between $\tau$ and $\tau'$ in $\SS^1$.

\begin{defn}\label{def:good} \
	\begin{enumerate}[label=(\roman{*})]
		\item A direction $\tau \in \SS^1$ is \emph{good} if for some constant $1 < A < |\beta|$ and every $\xi \in \Psi$ we have that $\liminf_{n \to \infty} A^n \llbracket \xi - \beta_0^n \tau\rrbracket = \infty$. 
		\item Conversely, a direction $\tau \in \SS^1$ is \emph{very bad} if for every $\xi \in \Psi$ we have $\limsup_{n \to \infty} |\beta|^n \llbracket \xi - \beta_0^n \tau\rrbracket = 0$.
		\item An eigenvector $\gamma$ of $M$ for $\beta$ is said to be good (resp.\ very bad) if $\gamma=z\Gamma$, where $z/|z|\in\SS^1$ is a good (resp.\ very bad) direction.
	\end{enumerate}
\end{defn}

Notice that if $\tau$ is a good direction then necessarily
$\liminf_{n \to \infty} A^n \llbracket \xi - \beta_0^n \tau\rrbracket =\infty$
for all $A \geq |\beta|$.

The definition of good direction is different from that of \cite{CGM}, but each good direction as in Definition \ref{def:good} is also a good direction in the sense of that article. Lemma 7.4 in \cite{CGM} can be then applied to show that the set of good directions has total Lebesgue measure, which implies that the set of very bad directions has measure zero. 

We will show in Lemma \ref{lem:cota_minimos} that the main result of \cite{CGM}, Theorem 7.1, still holds for good directions in the sense of Definition \ref{def:good}.

Our next two results characterize minimal sequences for good eigenvectors. Roughly speaking, the prefix-suffix decompositions of minimal sequences for these eigenvectors are given, up to a finite number of coordinates, by the pre-orbits of $H$ in its minimal components. 

\begin{thm}\label{thm:enter minimal}
	Assume the same hypotheses of Theorem \ref{thm:H_minimal}. If $\gamma=z\Gamma$ is a good eigenvector, $\tau=z/|z| \in \SS^1$ and $\omega \in \Omega_\sigma$ is a minimal sequence for $\gamma$, then the prefix-suffix decomposition $(p_m, c_m, s_m)_{m\geq 0}$ of $\omega$ satisfies that:
	\begin{itemize}
		\item $(\beta_0^{n}\tau, (p_{n}, c_{n}, s_{n}))$ belongs to a minimal component of $H$ for some $n \geq 0$,
		\item $(\beta_0^{m}\tau, (p_m, c_m, s_m)) = H^{-m+n}(\beta_0^{n}\tau, (p_{n}, c_{n}, s_{n}))$, for all $m \geq n$.
	\end{itemize}
\end{thm}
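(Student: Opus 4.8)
The plan is to translate the minimality condition $\Re(\gamma_n(\omega)) \geq 0$ for all $n \in \Z$ into a statement about the prefix-suffix decomposition, and then to recognize the resulting combinatorial constraint as exactly the orbit structure of $H$. First I would unpack what minimality says at the level of the prefix-suffix data $(p_m, c_m, s_m)_{m \geq 0}$. Using the hierarchical structure of $\omega$ coming from recognizability, the partial sums $\gamma_n(\omega)$ for $n$ ranging over a window around the origin at ``level $m$'' are governed by the quantities $\beta^{m}\z_{c_m}(\cdot)$ up to lower-order terms: more precisely, the negative and nonnegative coordinates of $\omega$ are built from $\sigma^m(p_m)$-blocks and $\sigma^m(s_m)$-blocks, and by \eqref{eq:gamma(sigma^n)} each such block contributes a term of the form $\beta^m \gamma(p)$ or $\beta^m \gamma(s)$. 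Collecting these, the requirement that $\Re(\gamma_n(\omega)) \geq 0$ for the $n$ that fall between two consecutive level-$m$ desubstitution breakpoints forces, after dividing by $|\beta|^m$ and writing $\tau_m = \beta_0^m \tau$, that the point $\z_{c_m}(x_m)$ associated with the tail of the prefix-suffix sequence starting at level $m$ is an extreme point of $\F_{c_m}$ in the direction $\tau_m$ — or, at the very least, that $\Re(\tau_m \z_{c_m}(x_m))$ equals $v_{c_m}(\tau_m)$ up to an error that shrinks as $m \to \infty$. This is where the ``good direction'' hypothesis enters: Lemma \ref{lem:cota_minimos} (the analogue of Theorem 7.1 of \cite{CGM}, which I may invoke) gives a quantitative lower bound showing that for good directions the error term cannot persist, so that for $m$ large enough the tail must represent a genuine extreme point, i.e. $v_{c_m}(\tau_m) = v_{c_m, (p_{m+1}, c_{m+1}, s_{m+1})}(\tau_m)$.

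Next I would exploit the u.r.p.\ to pin down the label. Since $\z_{c_m}(x_m)$ is an extreme point of $\F_{c_m}$ for the direction $\tau_m$, the unique representation property says it has a unique representation in $\S_{c_m}$; in particular it lies in exactly one piece $\F_{c_m, (p,c,s)}$, and that piece is $(p_{m+1}, c_{m+1}, s_{m+1})$. Translating back, this says precisely that $v_{c_m}(\tau_m) = v_{c_m, (p_{m+1}, c_{m+1}, s_{m+1})}(\tau_m)$ \emph{and} that $\tau_m \notin \Psi_{c_m}$ (the direction is not one of the finitely many ambiguous directions), so that $H$ is well-defined and single-valued at $(\tau_m, (p_m, c_m, s_m))$ and sends it to $(\beta_0^{-1}\tau_m, (p_{m+1}, c_{m+1}, s_{m+1})) = (\tau_{m-1}\cdot\beta_0^{-1}\cdot\beta_0, \dots)$ — more carefully, reading the definition of $H$ with $\tau = \tau_{m+1} = \beta_0^{m+1}\tau$, we get $H(\beta_0^{m+1}\tau, (p_{m+1}, c_{m+1}, s_{m+1})) = (\beta_0^{m}\tau, (p_m, c_m, s_m))$ once we verify the selector condition $v_{c_{m+1}}(\beta_0^{m+1}\tau) = v_{c_{m+1}, (p_m, c_m, s_m)}(\beta_0^{m+1}\tau)$; note the reindexing, since $H$ decreases the argument by one power of $\beta_0$ while the prefix-suffix index and the exponent of $\beta_0$ move in the same direction. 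Iterating this for all large $m$ gives the second bullet, namely $(\beta_0^m\tau, (p_m, c_m, s_m)) = H^{-m+n}(\beta_0^n\tau, (p_n, c_n, s_n))$ for all $m \geq n$, for some threshold $n$.

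It remains to upgrade ``for some $n$'' in the first bullet to membership in a \emph{minimal} component of $H$. By Theorem \ref{thm:H_minimal}, $H$ has finitely many minimal components and, on each, acts as an i.e.m.; a general fact about such maps (together with the interval-translation-map structure discussed in Section \ref{sec:the map H}) is that every forward orbit eventually enters a minimal component, or equivalently that the $\omega$-limit set of any point lies in the union of the minimal components. Since we have just shown that $(\beta_0^m\tau, (p_m, c_m, s_m))_{m \geq n}$ is a legitimate backward orbit of $H$ — equivalently, $(\beta_0^{m}\tau, (p_m, c_m, s_m))$ is in the image of $H^{m - n}$ for every $m$ — this infinite backward orbit forces the base point $(\beta_0^n\tau, (p_n, c_n, s_n))$ into the set of points that are ``$H$-recurrent in the past'', which for a map that is a finite union of i.e.m.'s on minimal components plus a wandering transient part must coincide with the union of the minimal components. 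Replacing $n$ by a possibly larger index where this membership is attained yields the first bullet.

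The main obstacle I anticipate is the first step: carefully controlling the error terms in the expansion of $\gamma_n(\omega)$ across scales and showing that the good-direction hypothesis forces the tails to be honest extreme points rather than merely approximate minimizers. This requires a uniform geometric estimate relating $v_{c_m}^{(k)}$ to $v_{c_m}$ (using Lemma \ref{lem:continuity of v_a} and the lateral-derivative structure) and a Borel--Cantelli-type argument along the sequence $\beta_0^m\tau$ — precisely the content packaged into Lemma \ref{lem:cota_minimos} — so the bulk of the real work is in setting up the bookkeeping so that this lemma can be applied cleanly. The u.r.p.\ step and the minimal-component step should then be comparatively formal.
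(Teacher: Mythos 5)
Your overall architecture matches the paper's: establish that for all sufficiently large $m$ the transition $(\beta_0^m\tau,(p_m,c_m,s_m)) \mapsto (\beta_0^{m-1}\tau,(p_{m-1},c_{m-1},s_{m-1}))$ is effected by $H$, and then invoke the structure theory of $H$ (every point falls into its minimal component after finitely many steps, equivalently $\Lambda_H$ is a disjoint union of minimal components) to obtain both bullets. Your third step, using the infinite backward $H$-orbit to place $(\beta_0^n\tau,\cdot)$ in $\Lambda_H$, is a clean and correct route to the first bullet; it is essentially what Lemma~\ref{lem:minimalarrival} and Corollary~\ref{thm:H^N_minimal} give.

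The gap is in the middle step, which is the technical heart. You attribute the quantitative estimate to Lemma~\ref{lem:cota_minimos}, but that lemma is about the boundedness of the series $\Sigma(w^{(n)},\ell)$ for minimal pointed words and is used in this paper only for Corollary~\ref{cor:finite min points} and the two theorems about wandering intervals; it does not control which subfractal label a finite-order minimizer lives in. The lemma the proof actually turns on is Lemma~\ref{lem:technical}, whose content is different and more delicate: if along a subsequence $m_k$ the label selected by $H$ is $(p,c,s)$ (i.e.\ $v_a(\beta_0^{m_k}\tau)=v_{a,(p,c,s)}(\beta_0^{m_k}\tau)$) and $x_k$ achieves the \emph{finite-order} minimum $v_a^{(m_k)}(\beta_0^{m_k}\tau)$, then for large $k$ one must have $x_k\in\S_{a,(p,c,s)}$. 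Its proof combines Lemma~\ref{lem:central} (a consequence of the u.r.p.\ giving a strictly positive modulus of separation between subfractals near $\Psi_a$), the exponential approximation bounds of Lemmas~\ref{lem:converg-exponencial} and \ref{converg-exponencial-2}, and the good-direction hypothesis $|\beta|^{m_k}\llbracket\xi-\beta_0^{m_k}\tau\rrbracket\to\infty$. The paper then argues by contradiction: if infinitely many transitions were not effected by $H$, Corollary~\ref{cor:minimalprefix} (the reversed truncation of the prefix-suffix decomposition achieves $v_a^{(m_k)}$, a fact due to minimality of $\omega$) would contradict Lemma~\ref{lem:technical}. Your plan of directly upgrading the truncations to ``genuine extreme points'' is a stronger intermediate claim than the paper establishes, and it is not what Lemma~\ref{lem:minimalprefix} gives (that lemma is about $v^{(n)}$, not $v$). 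Also note that the objects you should be working with are the \emph{reversed} initial segments of the prefix-suffix decomposition placed into $\S_{c_n}$, not ``tails'': the prefix-suffix decomposition and elements of $\S_a$ run in opposite directions relative to the substitution hierarchy, and conflating them obscures exactly where the finite-order quantities $v^{(n)}$ enter.
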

\smallskip

Conversely, we have a way to construct minimal sequences for good eigenvectors. 

\begin{thm}\label{thm:minimal points}
	Assume the same hypotheses of Theorem \ref{thm:H_minimal}. If $\gamma=z\Gamma$ is a good eigenvector, $\tau=z/|z| \in \SS^1$ and $(\tau,(p_0,c_0,s_0))$ belongs to a minimal component of the map $H$, then, setting $H^{-m}(\tau,(p_0,c_0,s_0)) = (\beta_0^m\tau, (p_m,c_m,s_m))$ for $m\geq 0$, we have that
	$(p_m,c_m,s_m)_{m\geq 0}$ is the prefix-suffix decomposition of some shift of a minimal sequence for the vector $\gamma$. 		
\end{thm}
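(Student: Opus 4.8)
The plan is to reverse the logic of Theorem~\ref{thm:enter minimal}: instead of starting from a minimal sequence and extracting the $H$-orbit of its prefix-suffix decomposition, we start from a point $(\tau,(p_0,c_0,s_0))$ in a minimal component of $H$, use its pre-orbit under $H$ to produce a candidate prefix-suffix sequence $(p_m,c_m,s_m)_{m\geq 0}$, reconstruct the corresponding point $\omega \in \Omega_\sigma$, and then verify that some shift of $\omega$ is minimal for $\gamma$. First I would check that $(p_m,c_m,s_m)_{m\geq 0}$ is a genuine prefix-suffix decomposition: since each step $H^{-1}$ sends $(\beta_0^m\tau,(p_m,c_m,s_m))$ to a triple $(p_{m+1},c_{m+1},s_{m+1})\in\bA_{c_m}$ (the pre-image living in the relevant $\S_{c_m,(\cdot)}$-piece), the compatibility relations $\sigma(c_{m+1})=p_m c_m s_m$ hold by construction; the only genuine point to rule out is the degenerate case where all but finitely many $p_m$ (and $s_m$) are empty, which would not correspond to a two-sided sequence — this is excluded because the pre-orbit stays in the minimal component of $H$, so it visits every label block infinitely often and in particular never stabilizes at triples with $p=s=\varepsilon$. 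Thus $\omega$ is well defined as a point of $\Omega_\sigma$ with prefix-suffix decomposition $(p_m,c_m,s_m)_{m\geq 0}$.

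Next I would compute $\gamma_n(\omega)$ in terms of the $\z$-maps. Writing the central part of $\omega$ as
$$
\ldots \sigma^2(p_2)\sigma(p_1)p_0 \cdot c_0 s_0 \sigma(s_1)\sigma^2(s_2)\ldots,
$$
a standard telescoping argument using \eqref{eq:gamma(sigma^n)} shows that the partial sums $\gamma_n(\omega)$, as $n$ ranges over the "natural" cut points (ends of the blocks $\sigma^m(s_m)$ for the non-negative side, and starts of the blocks $\sigma^m(p_m)$ for the negative side), are, up to an overall factor $\beta^{m}$ and bounded correction terms, given by partial evaluations of $\z_{c_m}$ along the $H$-pre-orbit. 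The condition that $(\tau,(p_0,c_0,s_0))$ lies in a minimal component of $H$ — where $H$ is conjugate to an i.e.m.\ by Theorem~\ref{thm:H_minimal} — forces $\beta_0^m\tau$ to stay uniformly away from $\Psi$ in a suitable quantitative sense; combined with goodness of $\tau$ this is exactly the input needed to invoke (the good-direction version of) Theorem~7.1 of \cite{CGM}, restated here as Lemma~\ref{lem:cota_minimos}, which controls $\Re(\beta_0^m\tau\,\z_{c_m}(\cdot))$ from below by $v_{c_m}(\beta_0^m\tau)$ and guarantees that the relevant $\z$-values are extreme points for their directions. By the u.r.p.\ these extreme points have unique representations, which pins down the pre-orbit and yields $\Re(\gamma_n(\omega))\geq -C$ for all $n\in\Z$, for a uniform constant $C$.

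Finally, to upgrade $\Re(\gamma_n(\omega))\geq -C$ to genuine minimality, I would pass to a shift: by Lemma~\ref{lem:existence of minimal points} (or rather its proof) the sequence $\gamma_n(\omega)$ attains a minimum real part over $n\in\Z$, say at $n=N$; replacing $\omega$ by $S^N\omega$ shifts the partial sums by $-\Re(\gamma_N(\omega))$, after which $\Re(\gamma_n(S^N\omega))\geq 0$ for all $n$, i.e.\ $S^N\omega$ is minimal for $\gamma$. One checks that shifting $\omega$ does not change the tail of its prefix-suffix decomposition beyond finitely many terms, so the constructed $(p_m,c_m,s_m)_{m\geq 0}$ is indeed the prefix-suffix decomposition of a shift of a minimal sequence, as claimed. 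The main obstacle, I expect, is the second step: carefully bookkeeping the telescoping of $\gamma_n(\omega)$ across the nested blocks $\sigma^m(p_m)$, $\sigma^m(s_m)$ and matching the error terms with the quantitative estimate of Lemma~\ref{lem:cota_minimos} so that the lower bound is uniform in $n$ — this is where goodness of the direction and the exponential contraction rate $1<A<|\beta|$ enter, and it requires the pre-orbit to remain in the minimal component rather than merely eventually entering it.
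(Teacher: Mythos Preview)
Your outline has the right shape, but there is a genuine gap at the third step and some misplaced citations along the way.

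First, your argument that infinitely many $p_m$ and $s_m$ are non-empty --- ``the pre-orbit visits every label block infinitely often'' --- is not quite complete: you would need to know that the minimal component actually contains a label with non-empty prefix (resp.\ suffix), which is not automatic. The paper argues differently: if $s_m=\varepsilon$ for all large $m$, the sequence $(p_m,c_m,s_m)$ is eventually periodic; feeding this into Lemma~\ref{converg-exponencial-2} produces a periodic representation of an extreme point, contradicting Lemma~5.8 of \cite{CGM}.

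Second, Lemma~\ref{lem:cota_minimos} is not the right tool for the lower bound $\Re(\gamma_n(\omega))\geq -C$: that lemma \emph{assumes} local minimality of the words $w^{(n)}$ and bounds $\Sigma(w^{(n)},\ell)$, so invoking it to \emph{establish} minimality would be circular. A uniform lower bound of the form $\Re(\gamma_n(\omega))\geq -C$ can indeed be obtained, but the correct input is Lemma~\ref{converg-exponencial-2}: since $(p_{M-1},\dots,p_0)$ arises from the forward $H$-orbit of $(\beta_0^M\tau,(p_M,c_M,s_M))$, one has $\Re(\beta_0^M\tau\,\z_{c_M}^{(M)}(x))\leq v_{c_M}(\beta_0^M\tau)+C|\beta|^{-M}$, and multiplying by $|\beta|^M$ gives the bound. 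Note that goodness of $\tau$ plays no role here.

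The real gap is your passage from ``$\Re(\gamma_n(\omega))\geq -C$'' to ``the infimum is attained at some $N$''. Bounded below does not imply attained, and Lemma~\ref{lem:existence of minimal points} says nothing about this specific $\omega$. This is exactly where the paper spends its effort and where goodness enters: one argues by contradiction that if $\omega$ is not a shift of a minimal sequence, then there is an increasing sequence $(m_k)$ along which the running minimum $v_{m_k-1}(\omega)$ strictly decreases, attained at some $n_k$ in the block $A_{m_k-1}$; the prefix--suffix decomposition of $S^{n_k}\omega$ then achieves $v_{c_{m_k}}^{(m_k)}(\beta_0^{m_k}\tau)$ but has first coordinate $(\bar p,\bar c,\bar s)\neq(p_{m_k-1},c_{m_k-1},s_{m_k-1})$. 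Since the $H$-construction forces $v_{c_{m_k}}(\beta_0^{m_k}\tau)=v_{c_{m_k},(p_{m_k-1},c_{m_k-1},s_{m_k-1})}(\beta_0^{m_k}\tau)$, this contradicts Lemma~\ref{lem:technical} for large $k$. In short, the paper bypasses the ``bounded below, then shift to the minimum'' strategy entirely and goes straight to a contradiction via Lemma~\ref{lem:technical}; if you want your route to work, you would need to adapt the proof of Lemma~\ref{lem:cota_minimos} to the ``almost locally minimal'' words produced by $H$ in order to force $\Re(\gamma_n(\omega))\to\infty$, which is substantially more than what you wrote.
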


The fact that $H$ has finitely many minimal components allows us to deduce:

\begin{cor}\label{cor:finite min points}
	Assume the same hypotheses of Theorem \ref{thm:H_minimal}. If
	$\gamma$ is a good eigenvector, then the set of minimal sequences for $\gamma$ is finite.
\end{cor}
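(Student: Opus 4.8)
The plan is to exhibit a finite-to-one map from the set $\M(\gamma)$ of minimal sequences for $\gamma$ into a finite set, using Theorem~\ref{thm:enter minimal} to push a minimal sequence into a minimal component of $H$ and Theorem~\ref{thm:H_minimal} to ``rewind'' it back to the fibre over $\tau$. Write $\gamma = z\Gamma$ and $\tau = z/|z|$. Recall first that, by recognizability (\cite{mosse}), a point of $\Omega_\sigma$ is determined by its prefix-suffix decomposition, so it suffices to bound the number of prefix-suffix decompositions arising from minimal sequences. Let $\omega\in\M(\gamma)$ have prefix-suffix decomposition $(p_m,c_m,s_m)_{m\geq 0}$ and set $y_m = (\beta_0^m\tau,(p_m,c_m,s_m))$. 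By Theorem~\ref{thm:enter minimal} there exist $n\geq 0$ and a minimal component $C$ of $H$ with $y_n\in C$ and $y_m = (H|_C)^{-(m-n)}(y_n)$ for all $m\geq n$; here $(H|_C)^{-1}$ is single-valued because, by Theorem~\ref{thm:H_minimal}, $H|_C$ is an i.e.m., hence a bijection of $C$, and $H(C)=C$. Since $H$ divides the first coordinate by $\beta_0$, the point $u(\omega) := (H|_C)^{n}(y_n)\in C$ has first coordinate $\tau$, so $u(\omega)$ lies in the set $F := \bigcup_{C}\bigl(C\cap(\{\tau\}\times\bA)\bigr)$, the union being over the minimal components $C$ of $H$. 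As $\bA$ is finite and, by Theorem~\ref{thm:H_minimal}, there are finitely many minimal components, $F$ is finite.

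Next I would check that $\omega\mapsto u(\omega)$ is finite-to-one. Suppose $u(\omega)=u(\omega')=:u$, where $\omega'$ has prefix-suffix decomposition $(p'_m,c'_m,s'_m)_{m\geq 0}$ and parameters $n',C'$. Distinct minimal components are disjoint, so $C=C'$, and for every $m\geq\max\{n,n'\}$ one computes $(\beta_0^m\tau,(p_m,c_m,s_m)) = (H|_C)^{-m}(u) = (\beta_0^m\tau,(p'_m,c'_m,s'_m))$; hence the two prefix-suffix decompositions coincide from some index $N$ on. Using the identity $\sigma^N(c_N) = \sigma^{N-1}(p_{N-1})\cdots\sigma(p_1)p_0\,c_0 s_0\,\sigma(s_1)\cdots\sigma^{N-1}(s_{N-1})$ (immediate by induction from $\sigma(c_{m+1})=p_m c_m s_m$), two points of $\Omega_\sigma$ whose prefix-suffix decompositions agree from index $N$ on represent the same bi-infinite word up to a shift that moves the separating dot inside the factor $\sigma^N(c_N)$; therefore $\omega'=S^d\omega$ for some $d\in\Z$. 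Finally, $\omega$ minimal gives $\min_k\Re(\gamma_k(\omega)) = \Re(\gamma_0(\omega)) = 0$, while $\omega'=S^d\omega$ minimal forces $\Re(\gamma_d(\omega)) = \min_k\Re(\gamma_k(\omega)) = 0$ (using $\gamma_m(S^d\omega)=\gamma_{m+d}(\omega)-\gamma_d(\omega)$). By Lemma~\ref{lem:cota_minimos}, which in particular forces $\Re(\gamma_j(\omega))\to+\infty$ as $|j|\to\infty$ for a minimal sequence and a good eigenvector, only finitely many $j\in\Z$ satisfy $\Re(\gamma_j(\omega))=0$; hence the fibre of $\omega\mapsto u(\omega)$ over $u$ is finite.

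Combining the two steps, $\M(\gamma)$ is finite. The substantive input is Theorem~\ref{thm:enter minimal} together with the invertibility of $H$ on its minimal components; the genuinely delicate point — the reason one cannot simply identify $\omega$ with $u(\omega)$ — is that the entry time $n$ provided by Theorem~\ref{thm:enter minimal} is not controlled a priori, which is what makes the finite-to-one detour necessary, and there the operative fact is that each shift orbit contains only finitely many minimal sequences, a consequence of Lemma~\ref{lem:cota_minimos}.
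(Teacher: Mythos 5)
Your proof is correct and is essentially the paper's argument repackaged: the paper first shows there are finitely many shift orbits of minimal sequences (Lemma~\ref{cor:finite min orbits}, via Theorem~\ref{thm:enter minimal} and the finiteness of the minimal components of $H$) and then that each orbit contains only finitely many minimal sequences (via Lemma~\ref{lem:cota_minimos}), which is exactly your finite-to-one map $u$ split into its finite range $F$ and its fibres. The only step you assert without justification is that two sequences whose prefix-suffix decompositions agree from some index on lie in the same shift orbit --- this requires infinitely many nonempty prefixes and suffixes (otherwise the central parts do not exhaust the bi-infinite word), which is supplied by Lemma~\ref{lem:infinitenonempty} since both sequences are minimal, and is invoked explicitly at the corresponding point of the paper's proof.
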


Depending on the logarithm of the slope vector of an affine extension of $T$, the existence or absence of wandering intervals is ensured:

\begin{thm}\label{thm:no_conjugacy}
	Assume the same hypotheses of Theorem \ref{thm:H_minimal}. If $\gamma$ is a good eigenvector, then no affine extension of $T$ with slope vector $\exp(-\Re(\gamma))$ is conjugate to~$T$. 
\end{thm}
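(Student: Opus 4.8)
The plan is to argue by contradiction: suppose $f$ is an affine extension of $T$ with slope vector $\ell = \exp(-\Re(\gamma))$ that is conjugate to $T$, say via an orientation-preserving homeomorphism $h\colon I \to I$ with $h\circ f = T\circ h$. The standard device (going back to \cite{cam-gut}, \cite{cobo}, \cite{bressaud-deviation}) is that a conjugacy $h$ is in particular a homeomorphism, hence has no wandering intervals, and one can extract from it an invariant measure or a cocycle identity. Concretely, pulling back Lebesgue measure by $h$ produces a $T$-invariant measure on $I$; by unique ergodicity of the self-similar $T$ this must be a multiple of Lebesgue measure, which forces $h$ to be absolutely continuous with a derivative that is (up to normalization) the Radon--Nikodym cocycle determined by $\ell$. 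Tracking this cocycle along the orbit of a distinguished point of $\gamma$ gives that, for every $\omega$ in $\Omega_\sigma$ which is an itinerary of a point realizing the relevant distinguished orbit, the partial sums $\Re(\gamma_n(\omega))$ must stay bounded — both above and below — over all $n\in\Z$.

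The next step is to produce a minimal sequence whose partial sums are \emph{not} bounded above, contradicting the boundedness just derived. Since $\gamma$ is good, Lemma \ref{lem:existence of minimal points} gives at least one minimal sequence $\omega$ for $\gamma$, so $\Re(\gamma_n(\omega))\geq 0$ for all $n\in\Z$; and by Theorem \ref{thm:enter minimal} its prefix-suffix decomposition eventually follows the backward orbit of $H$ inside a minimal component. The key is the growth relation \eqref{eq:gamma(sigma^n)}: applying $\sigma$ multiplies $\gamma$-sums of words by $\beta$, and $|\beta|>1$. Using the prefix-suffix decomposition ${}\ldots\sigma^2(p_2)\sigma(p_1)p_0\cdot c_0 s_0\sigma(s_1)\sigma^2(s_2)\ldots$ of $\omega$, the partial sum of $\gamma$ up to the end of the block $\sigma^{m}(p_m)$ is a combination of terms $\beta^{j}\gamma(p_j)$, $0\le j\le m$; for a minimal sequence these cannot all conspire to cancel (nonzero prefixes must appear infinitely often along the $H$-orbit because a minimal component of an i.e.m.\ is infinite), so along a subsequence $\Re(\gamma_{n_k}(\omega))$ grows like $|\beta|^{m_k}\to\infty$. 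A clean way to package this is via the extreme-point/fractal picture: minimality for $\gamma$ forces the tails $\beta^{-m}\gamma(p_m)$ to lie, after rotation by the appropriate direction $\beta_0^m\tau$, on the ``correct side'' so that $\z_a(x)$ is an extreme point, and then Lemma \ref{lem:cota_minimos} (the extension of Theorem 7.1 of \cite{CGM} to good directions promised in the excerpt) controls $\Re(\gamma_n(\omega))$ from below by a positive quantity that, run backward through the substitution, is amplified by powers of $|\beta|$.

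I would carry out the steps in this order: (1) from the hypothetical conjugacy $h$ and unique ergodicity of $T$, derive that $\log h'$ is the $\ell$-cocycle and hence that $\Re(\gamma_n(\cdot))$ is two-sided bounded along the itinerary of a distinguished point; (2) invoke Theorem \ref{thm:enter minimal} and Lemma \ref{lem:existence of minimal points} to get a minimal sequence $\omega$ for the good $\gamma$ whose prefix-suffix data eventually lie in a minimal component of $H$; (3) use \eqref{eq:gamma(sigma^n)} together with the infinitude of that minimal component and the u.r.p./Lemma \ref{lem:cota_minimos} lower bound to show $\Re(\gamma_n(\omega))$ is unbounded above; (4) observe $\omega$ (or a shift of it) is the itinerary of a distinguished point — this is exactly the content connecting minimal sequences to distinguished points, available under the u.r.p.\ hypotheses — and conclude that the two-sided bound of step (1) fails, a contradiction.

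The main obstacle I expect is step (1), specifically making rigorous the passage from ``conjugacy'' to ``the $\log\ell$-cocycle is summable / bounded along a distinguished orbit'': one must be careful that a conjugacy need not be $C^1$ a priori, so the argument must go through unique ergodicity and the Birkhoff-sum characterization of the length vector $\lambda$ being orthogonal to $\log\ell$ (the criterion of \cite{cam-gut}), rather than through naive differentiation. The subtlety is that $\log\ell = -\Re(\gamma)$ with $\gamma$ a non-Perron eigenvector is automatically orthogonal to $\lambda$, so an affine extension exists; the content of the theorem is that it cannot be the \emph{full} conjugacy, and that extra rigidity comes precisely from the two-sided (not merely one-sided) boundedness that a homeomorphism would impose on $\Re(\gamma_n)$, which the growth by $|\beta|>1$ along minimal sequences rules out.
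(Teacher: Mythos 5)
Your step (1) is the crux, and it contains a genuine gap (which you partly anticipate but do not close). The measure you get by transporting Lebesgue through $h$ is not a $T$-invariant measure: since $h\circ f=T\circ h$, the pullback $A\mapsto \mathrm{Leb}(h(A))$ is $f$-invariant, while the pushforward $h_*\mathrm{Leb}$ is only $T$-\emph{quasi}-invariant with Radon--Nikodym cocycle given by $\ell$. Unique ergodicity of $T$ therefore does not identify anything with Lebesgue measure, does not make $h$ absolutely continuous, and does not produce the two-sided boundedness of $\Re(\gamma_n(\cdot))$ along a distinguished orbit that your contradiction in steps (3)--(4) requires. Without that boundedness the rest of the argument has nothing to contradict: for a minimal sequence one indeed has $\Re(\gamma_n(\omega))\to+\infty$ (this is exactly what Lemma \ref{lem:cota_minimos} gives, via summability of $\ell_n(\omega)$), but that by itself is the input to the Camelier--Guti\'errez blow-up construction, not an obstruction to \emph{every} extension with slope $\ell$ being conjugate.

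The paper's proof avoids differentiating or regularizing $h$ altogether. It pushes Lebesgue forward to the subshift, obtaining a measure $\mu$ on $\Omega_\sigma$ satisfying the cocycle relation \eqref{eq:prop_cociclo}, and decomposes $\Omega_\sigma$ into the towers $\{S^m(\Omega_{\sigma^{n}(a)})\}_{0\le m\le r_{a,n}-1}$. Choosing the base of each tower at a \emph{locally minimal} position $w^{(n)}=S^{m_n}(\sigma^n(a))$, the total mass of the tower equals $\mu(\Omega_{w^{(n)}})\,\Sigma(w^{(n)},\ell)$, and Lemma \ref{lem:cota_minimos} bounds $\Sigma(w^{(n)},\ell)$ by a uniform constant $K$. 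Since some letter $a$ carries tower mass at least $\delta>0$ along a subsequence, one gets $\mu(\Omega_{w^{(n_k)}})\ge \delta/K$, forcing $\mu$ to have an atom --- which is impossible if $h$ is injective. Note that this route uses only Lemma \ref{lem:cota_minimos} and makes no use of Theorem \ref{thm:enter minimal}, the map $H$, or distinguished points. If you want to salvage your outline, the piece to replace is precisely step (1): substitute the quasi-invariant-measure/tower computation for the claimed ``two-sided boundedness from a conjugacy,'' which as stated is not a theorem.
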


\begin{thm}\label{thm:no-wandering}
	Assume the same hypotheses of Theorem \ref{thm:H_minimal}. If $\gamma$ is a very bad eigenvector, then every affine extension of $T$ with slope vector $\exp(-\Re(\gamma))$ is conjugate to $T$. 
\end{thm}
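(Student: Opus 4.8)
The plan is to show that if $\gamma$ is a very bad eigenvector then $T$ admits no minimal sequence for $\gamma$ of the ``wandering type'', i.e.\ no minimal sequence $\omega$ with $\sum_{n\in\Z}\exp(-\Re(\gamma_n(\omega)))<\infty$, and in fact that along every minimal sequence the partial sums $\Re(\gamma_n(\omega))$ stay bounded, so that the Denjoy blow-up construction cannot produce a wandering interval. Recall from the introduction that an affine extension $f$ of $T$ is either conjugate to $T$ or it has a wandering interval whose orbit, pushed through the semi-conjugacy, is the orbit of a distinguished point; and by the main result of \cite{CGM} (valid for good directions, but we will want a statement that applies here) distinguished points correspond to minimal sequences with summable potential $\sum\exp(-\Re(\gamma_n(\omega)))$. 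So the statement reduces to: if $\tau=z/|z|$ is very bad, then no minimal sequence for $\gamma=z\Gamma$ has summable potential, hence $f$ has no wandering interval, hence $f$ is conjugate to $T$.

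First I would translate the very bad condition into control on $v_a$ along the orbit $(\beta_0^n\tau)_n$. By Lemma \ref{lem:continuity of v_a} each $v_a$ is continuous with lateral derivatives everywhere, and $\Psi$ is the finite set where the decomposition of $v_a$ into the pieces $v_{a,(p,c,s)}$ is ambiguous; away from $\Psi$ the map $H$ is continuous. The key elementary estimate is that for a minimal sequence $\omega$ with prefix-suffix decomposition $(p_m,c_m,s_m)_{m\ge0}$, the quantity $\Re(\tau\,\beta^{-m}\gamma_n(\omega))$ (suitably normalized) is, up to bounded error coming from the finitely many first coordinates and from the fixed word lengths, comparable to a telescoping sum of the values $v_{c_m}(\beta_0^m\tau)$; minimality forces each such value to be $\ge -C$ for a uniform constant, and the decay rate of $v_{c_m}(\beta_0^m\tau)$ toward the relevant minimum is governed by how fast $\beta_0^m\tau$ approaches $\Psi$. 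Since $\tau$ is very bad, $\llbracket\xi-\beta_0^m\tau\rrbracket = o(|\beta|^{-m})$ for every $\xi\in\Psi$, which is precisely the rate that makes the ``error'' term $|\beta|^m v_{c_m}(\beta_0^m\tau)$ fail to decay: the potential $\Re(\gamma_n(\omega))$ then cannot grow, it stays bounded, so $\sum_n\exp(-\Re(\gamma_n(\omega)))$ diverges. I would carry this out by revisiting the proof of Theorem 7.1 of \cite{CGM} / Lemma \ref{lem:cota_minimos} and reading off the reverse inequality in the very bad regime: the same chain of estimates that gives an upper bound $\Re(\gamma_n(\omega))\le \text{const}$ for good directions gives, for very bad directions, a two-sided bound $|\Re(\gamma_n(\omega))|\le\text{const}$, uniformly in $n$ and in the minimal sequence.

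With boundedness of the potential in hand, the conclusion is standard and mirrors \cite{cam-gut}, \cite{cobo}, \cite{bressaud-deviation}: an affine extension $f$ of $T$ with slope vector $\exp(-\Re(\gamma))$ that is not conjugate to $T$ must, by Yoccoz's dichotomy from \cite{yoccoz-wandering} together with the fact that $T$ is minimal and uniquely ergodic, possess a wandering interval; the sum of the lengths of the (disjoint) images of this interval under $T^n$ is finite and, via the semi-conjugacy and the formula for the slopes, this sum is comparable to $\sum_{n\in\Z}\exp(-\Re(\gamma_n(\omega)))$ for the itinerary $\omega$ of a distinguished point, which is a shift of a minimal sequence for $\gamma$. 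The divergence just proved contradicts summability, so no wandering interval exists and $f$ is conjugate to $T$.

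The main obstacle I expect is making the comparison between $\Re(\gamma_n(\omega))$ and the telescoping sum of $v_{c_m}(\beta_0^m\tau)$ fully rigorous in the \emph{reverse} direction: in \cite{CGM} only an inequality (upper bound on the potential) was needed to get convergence and existence of wandering intervals, whereas here one needs a matching lower bound valid for \emph{every} minimal sequence, not just for a well-chosen one. This requires checking that the contributions of the suffix words $s_m$ and of the non-extreme pieces of the fractals $\F_{c_m}$ are genuinely negligible — which is where the u.r.p.\ is used (extreme points are isolated from the other pieces, so near a point of $\Psi$ the minimizer is essentially unique) — and that the error terms are controlled by $|\beta|^m\llbracket\xi-\beta_0^m\tau\rrbracket$ uniformly in the choice of $\omega$. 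Once that uniform two-sided estimate is established, the rest is the routine dichotomy argument above.
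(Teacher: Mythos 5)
Your high-level strategy matches the paper's: reduce to showing that for a very bad $\gamma$ every minimal sequence $\omega$ has divergent series $\Sigma(\omega,\ell)=\sum_{n\in\Z}\exp(-\Re(\gamma_n(\omega)))$, and then observe that a wandering interval for a semi-conjugate affine extension would force $\Sigma(\omega,\ell)<\infty$ for the itinerary $\omega$ of the wandering interval. The final step (wandering interval $\Rightarrow$ summable potential) is carried out in the paper exactly as you describe, via the disjointness of the iterates $f^n(J_0)$ and the cocycle relation for the slopes.

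However, there are two genuine problems in the middle of your argument. First, the two-sided bound $|\Re(\gamma_n(\omega))|\le\mathrm{const}$ uniformly in $n$ that you claim for very bad directions is too strong and is not what the paper proves (nor is it likely to be true). The relevant estimate only kicks in when $\beta_0^{m}\tau$ approaches a point of $\Psi$, which happens along a sparse subsequence; at generic times $\beta_0^m\tau$ stays away from $\Psi$ and the gap $v_{a,(\bar p,\bar c,\bar s)}-v_{a,(p,c,s)}$ is bounded below, so nothing forces $\Re(\gamma_m(\omega))$ to stay small. What the paper shows is that there is a subsequence $(m_k)$ on which $\Re(\gamma_{m_k}(\omega))\le C_2$ (and in one subcase $=0$); boundedness along a subsequence is all one needs for divergence of the series, and it is all one gets. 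Your claim that the good-direction case gives ``an upper bound $\Re(\gamma_n(\omega))\le\mathrm{const}$'' is also backwards: for good directions Lemma~\ref{lem:cota_minimos} gives the \emph{lower} bound $\Re(\gamma_m(w^{(n)}))\gtrsim m^{\rho}$, which is what makes the series converge; so there is no ``two-sided'' refinement to read off.

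Second, you never address \emph{how} to locate the subsequence along which $\beta_0^{m_k}\tau$ approaches $\Psi$ with two distinct competing labels. In the paper this is the heart of the proof: it requires the auxiliary lemma that every minimal component of $H$ contains a point $(\xi,(q,a,r))$ with $\xi\in\Psi_a$ (which itself needs the u.r.p.\ and the structure theory of $H$ from Section~\ref{sec:the map H}), followed by a case analysis on whether the prefix-suffix decomposition of $\omega$ eventually agrees with a pre-orbit of $H$ (Case~1) or deviates infinitely often (Case~2). Without that lemma and that dichotomy, ``revisiting the proof of Theorem~7.1 of \cite{CGM} and reading off the reverse inequality'' does not produce the needed subsequence. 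Your mention of the u.r.p.\ as controlling ``contributions of the suffix words $s_m$'' also misidentifies its role: it is used to make $\Psi$ finite and to get the Lipschitz separation $v_{a,(\bar p,\bar c,\bar s)}(\tau_k)-v_{a,(p,c,s)}(\tau_k)\ge \tfrac12 D\llbracket\xi-\tau_k\rrbracket$ of Lemma~\ref{lem:central}, not to discard suffix contributions.
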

In other words, affine extensions constructed from good eigenvectors exhibit wandering intervals, whereas those constructed from very bad eigenvectors do not.

An important consequence of the previous results is that one can explicitly describe minimal sequences producing affine extensions with wandering intervals of a given self-similar i.e.m.\ and, thus, 
construct good approximations of such extensions. 

In particular, we apply our results to the cubic Arnoux--Yoccoz map. For this example there exists an eigenvalue $\beta$ of the associated matrix $M$ with $|\beta|>1$ and multiplicity one. 
It was proved in \cite{CGM} that the u.r.p.\ holds. We prove that that map $H$ has exactly two minimal components and, thus:

\begin{thm}
	In the cubic Arnoux--Yoccoz map, for each good eigenvector $\gamma$ associated with $\beta$, there are exactly two orbits of minimal sequences. 
\end{thm}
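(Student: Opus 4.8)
The plan is to combine the structural result Theorem~\ref{thm:H_minimal} with the two-way correspondence furnished by Theorems~\ref{thm:enter minimal} and~\ref{thm:minimal points}, and then to pin down the number ``two'' by an explicit computation with the map $H$ for the cubic Arnoux--Yoccoz substitution. The guiding principle is that orbits of minimal sequences under the shift $S$ on $\Omega_\sigma$ should be in bijection with minimal components of $H$. So the proof naturally splits into a soft part (this bijection) and a hard, example-specific part (there are exactly two minimal components).

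First I would establish the soft part. Let $\gamma = z\Gamma$ be a good eigenvector and $\tau = z/|z|$. By Theorem~\ref{thm:enter minimal}, if $\omega$ is a minimal sequence for $\gamma$ with prefix-suffix decomposition $(p_m,c_m,s_m)_{m\geq 0}$, then for some $n\geq 0$ the point $(\beta_0^n\tau, (p_n,c_n,s_n))$ lies in a minimal component $\mathcal{C}$ of $H$, and the entire tail of the decomposition is the forward $H^{-1}$-orbit of that point. Conversely, by Theorem~\ref{thm:minimal points}, any point $(\tau', (p,c,s))$ in a minimal component of $H$ with $\tau' = \beta_0^m\tau$ for some $m \geq 0$ produces, via its $H^{-1}$-orbit, the prefix-suffix decomposition of a shift of a minimal sequence for $\gamma$. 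The prefix-suffix decomposition of a point of $\Omega_\sigma$ determines the point (by recognizability, as recalled in Section~\ref{sec:preliminaries}), and two points have the same $S$-orbit if and only if their prefix-suffix decompositions eventually agree; so the $S$-orbit of $\omega$ is determined by the minimal component of $H$ reached, together with the eventual $H^{-1}$-itinerary inside it. Since $H$ restricted to each minimal component is an i.e.m.\ (hence invertible there), the $H^{-1}$-orbit is a single bi-infinite orbit, and by minimality of an i.e.m.\ the forward orbit of $\tau$ under $\beta_0^{-1}$ (which is the first coordinate) enters and then stays in the minimal component; moreover the resulting orbit depends only on which minimal component it enters, not on where along the way. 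The upshot I would extract is: the number of $S$-orbits of minimal sequences for $\gamma$ is at most the number of minimal components of $H$, and at least that number provided each minimal component is actually reached by the $\beta_0^{-1}$-orbit of some admissible $\tau$ — which is where goodness of $\tau$ is used, exactly as in Lemma~\ref{lem:cota_minimos}, to guarantee the orbit avoids the finite bad set $\Psi$ and hence eventually lands in (the interior of) a minimal component. I would phrase this as an equality: the set of $S$-orbits of minimal sequences for a good $\gamma$ is in natural bijection with the minimal components of $H$.

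The hard part, and the main obstacle, is computing that $H$ has exactly two minimal components for the cubic Arnoux--Yoccoz substitution. I would carry this out by the explicit analysis promised in Section~\ref{sec:example} and the Appendix: write down the Arnoux--Yoccoz substitution on its three-letter alphabet, list the label set $\bA = \bigcup_a \bA_a$, compute the eigenvalue $\beta$ with $|\beta|>1$ and the eigenvector $\Gamma$, and then compute the fractals $\F_a$ and $\F_{a,(p,c,s)}$ well enough to determine, for each $a$, the finite set $\Psi_a$ of ``bad'' directions and, for every $\tau\notin\Psi_a$, which label $(p,c,s)$ realises $v_a(\tau)=v_{a,(p,c,s)}(\tau)$. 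That data is exactly the combinatorics of the interval translation map to which $H$ is conjugate (Theorem~\ref{thm:H_minimal} and Section~\ref{sec:the map H}); reading off its minimal components is then a finite check. I expect the bookkeeping here to be delicate because one must control which arcs of $\SS^1$ map to which labels, and the circle is rotated by the irrational angle $\arg(\beta_0^{-1})$, so the resulting interval translation map is genuinely non-trivial; the claim ``exactly two'' is the content that cannot be avoided. Having done this, I would invoke Theorem~\ref{thm:H_minimal} to know the restriction to each of the two minimal components is an i.e.m., and conclude via the bijection from the soft part.

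Finally, assembling: for a good eigenvector $\gamma$ of the cubic Arnoux--Yoccoz map associated with $\beta$, Theorems~\ref{thm:enter minimal} and~\ref{thm:minimal points} together with recognizability give a bijection between $S$-orbits of minimal sequences for $\gamma$ and minimal components of $H$ that are reached by the $\beta_0^{-1}$-orbit of $\tau=z/|z|$; goodness of $\tau$ ensures both minimal components are reached; and the explicit computation shows there are exactly two. Hence there are exactly two orbits of minimal sequences for $\gamma$. (As the introduction notes, this also yields exactly two orbits of distinguished points, via the correspondence of \cite{CGM}, but that statement is not needed for the theorem as stated.)
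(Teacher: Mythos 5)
Your overall strategy matches the paper's: combine the general correspondence from Theorems~\ref{thm:enter minimal} and \ref{thm:minimal points} with an explicit, Arnoux--Yoccoz-specific computation of the minimal components of $H$ (the paper does this by iterating $H$ until $H^{30}(\SS^1\times\bA)=H^{31}(\SS^1\times\bA)$ and reading off the two circles in Figure~\ref{fig:minimalcomponents}). But the ``soft part'' of your argument contains an unjustified step that is actually the crux of getting the exact count.

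You claim that the set of $S$-orbits of minimal sequences is in \emph{natural bijection} with the minimal components of $H$, on the grounds that ``the resulting orbit depends only on which minimal component it enters, not on where along the way.'' That is not a general consequence of Theorems~\ref{thm:enter minimal} and \ref{thm:minimal points}. Recall that $H$ is a skew product over the rotation $\tau\mapsto\beta_0^{-1}\tau$, and $\pi_{\SS^1}$ is only a factor map onto this rotation; a priori a minimal component $Y$ of $H$ could project onto $\SS^1$ as a $k$-to-one cover with $k\geq 2$ (a skew extension of the rotation can be a minimal i.e.m.\ without being conjugate to the rotation). In that case, over a fixed good $\tau$ there would be $k$ points of $Y$ whose $H$-orbits are pairwise disjoint, and by Theorem~\ref{thm:minimal points} each such $H$-orbit produces a minimal sequence whose prefix-suffix decomposition has a different tail; by recognizability these would lie in $k$ distinct $S$-orbits. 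So in general one would get \emph{at least} as many $S$-orbits as minimal components, with equality precisely when each minimal component projects \emph{bijectively} onto $\SS^1$. Minimality and invertibility of $H$ restricted to a component, which is all you invoke, do not rule out multi-sheeted components. The extra fact you need -- that the two minimal components found by the computation are each a single circle, i.e.\ that $\pi_{\SS^1}$ restricted to each is a bijection and $H$ there is literally conjugate to the rotation -- is exactly what the paper's computation establishes (see the caption of Figure~\ref{fig:minimalcomponents}). You should either fold this verification into the ``hard part'' you describe, or explicitly note that the count ``exactly two'' rests on the single-sheetedness of the two minimal components, not just their number.
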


By the construction shown at the beginning of Section 2 of  \cite{CGM}, to each of these two orbits of minimal sequences corresponds an affine i.e.m.\ with slope vector
$\exp(-\Re(\gamma))$ which is semi-conjugate to $T$. These two affine i.e.m.\ have distinct wandering intervals and are, therefore, not conjugate to each other.

\section{Main consequences of the unique representation property}\label{sec:Extreme points}

In this section we state the main technical lemmas implied by the u.r.p.\ that we will use to prove our main results.
Consider $T$ a self-similar i.e.m.\ satisfying the hypotheses of Theorem \ref{thm:H_minimal}. That is, $M$ has an eigenvalue $\beta$ with $|\beta|>1$ such that $\beta/|\beta|$ is not a root of unity, and that there exists an eigenvector $\Gamma$ for $\beta$ such that $T$ has the u.r.p.\ for $\beta$ and $\Gamma$. This eigenvector will be fixed for the rest of the article and all concepts defined in Section \ref{subsec: fractals associated with sigma and beta} will be associated with it. 

Our next lemma is a slightly more general version of Lemma 7.7 in \cite{CGM}. We omit the proof because it is essentially the same.

\begin{lem}\label{lem:central}
	Let $a \in \A$ and $\tau\in \SS^1$ such that $v_a(\tau)=v_{a,(p,c,s)}(\tau)=
	v_{a,(\bar p,\bar c,\bar s)}(\tau)$, with $(p,c,s)$ and $(\bar p,\bar c,\bar s)$ different elements of $\bA_a$. Then there exist finite constants 
	$0<D_1\leq D_2$ such that if $(\tau_k)_{k\geq 1}$ is a sequence in $\SS^1$ converging to $\tau$ when $k\to\infty$, then
	\begin{align*}
	D_1 &\leq \liminf_{k\to\infty} \frac{|v_{a,(p,c,s)}(\tau_k) - v_{a,(\bar p,\bar c,\bar s)}(\tau_k)|}{\llbracket \tau - \tau_k \rrbracket} \\
	&\leq \limsup_{k\to\infty} \frac{|v_{a,(p,c,s)}(\tau_k) - v_{a,(\bar p,\bar c,\bar s)}(\tau_k)|}{\llbracket \tau - \tau_k \rrbracket} \leq D_2.
	\end{align*}
\end{lem}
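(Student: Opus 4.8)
The plan is to exploit the \emph{finite} structure of the fractals at finite levels together with the u.r.p.\ to control how the minimizing representations of $z \in \F_{a,(p,c,s)}$ and $z \in \F_{a,(\bar p,\bar c,\bar s)}$ behave as $\tau_k \to \tau$. Since $v_a(\tau) = v_{a,(p,c,s)}(\tau) = v_{a,(\bar p,\bar c,\bar s)}(\tau)$ and $(p,c,s) \neq (\bar p,\bar c,\bar s)$, by the u.r.p.\ there exist unique representations $x, \bar x \in \S_a$ of the (distinct, because they lie in disjoint pieces under the u.r.p.) extreme points realizing $v_{a,(p,c,s)}(\tau)$ and $v_{a,(\bar p,\bar c,\bar s)}(\tau)$ respectively. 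First I would recall, as in Lemma 7.7 of \cite{CGM}, that for each $n$ the sets $\F^{(n)}_{a,(p,c,s)}$ are finite, the maps $v^{(n)}_{a,(p,c,s)}$ are minima of finitely many linear functionals $\tau \mapsto \Re(\tau z)$, hence piecewise-linear and Lipschitz in $\tau$, and that $v^{(n)}_{a,(p,c,s)} \to v_{a,(p,c,s)}$ uniformly at a geometric rate $O(|\beta|^{-n})$ because $\z_a - \z^{(n)}_a = O(|\beta|^{-n})$ uniformly on $\S_a$.

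The upper bound $D_2$: for $\tau_k$ near $\tau$, the value $v_{a,(p,c,s)}(\tau_k)$ is bounded above by $\Re(\tau_k \z_a(x))$ and bounded below by a competitor argument; comparing against the fixed representations $x$ and $\bar x$ gives
\begin{equation*}
|v_{a,(p,c,s)}(\tau_k) - v_{a,(\bar p,\bar c,\bar s)}(\tau_k)| \leq |\Re(\tau_k \z_a(x)) - \Re(\tau_k \z_a(\bar x))| + (\text{error}),
\end{equation*}
and since $\Re(\tau\z_a(x)) = \Re(\tau\z_a(\bar x))$ at $\tau$, the leading term is $O(\llbracket \tau - \tau_k\rrbracket)$ by the Lipschitz dependence of $\Re(\tau_k z)$ on $\tau_k$; the error terms must be shown to be $o(\llbracket\tau-\tau_k\rrbracket)$, which is where the finite-level approximation and a careful choice of truncation level $n = n(k) \to \infty$ slowly enough enters. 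The lower bound $D_1 > 0$ is the substantive half: one must show the two branches genuinely separate \emph{linearly}. Here the idea is that, near $\tau$, the minimum defining $v_{a,(p,c,s)}(\tau_k)$ is attained (or nearly attained) at a representation close to $x$, and similarly for $\bar x$; the difference $\Re(\tau_k(\z_a(x) - \z_a(\bar x)))$ vanishes to first order only if the vector $\z_a(x) - \z_a(\bar x)$ is parallel to $\tau$, i.e.\ $\tau(\z_a(x) - \z_a(\bar x)) \in \R$, which combined with $\Re(\tau(\z_a(x)-\z_a(\bar x))) = 0$ would force $\z_a(x) = \z_a(\bar x)$, contradicting u.r.p.; hence the derivative of the difference in the direction of $\tau$ is bounded away from $0$, giving $D_1$.

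The main obstacle I anticipate is handling the ``error'' terms uniformly: the minimizing representations at $\tau_k$ are not literally $x$ and $\bar x$, and one needs to argue that the minimizer of $\Re(\tau_k \z_a(\cdot))$ over $\S_{a,(p,c,s)}$ stays within $O(\llbracket\tau-\tau_k\rrbracket)$ (in the value, not the representation) of the value at $x$ — this is a stability/semicontinuity statement for the argmin that relies crucially on the u.r.p.\ (uniqueness of the extreme representation) and on Lemma \ref{lem:continuity of v_a} (existence of lateral derivatives of $v_a$). Once that stability is in hand, both bounds follow by taking $\liminf$/$\limsup$ along $k \to \infty$ and absorbing the vanishing corrections. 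Since this is exactly the argument of Lemma 7.7 in \cite{CGM} with only the hypothesis ``$\tau \in \Psi_a$'' replaced by the explicit coincidence of two branch values, I would simply indicate that the proof carries over verbatim, as the authors do.
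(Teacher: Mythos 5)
Your proposal is essentially the approach the paper itself takes: the paper also omits the detailed proof, deferring to Lemma~7.7 of \cite{CGM}, and your sketch captures the correct mechanism (the first-order rate of change of $\Re(\tau_k z)-\Re(\tau_k z')$ as $\tau_k\to\tau$ is governed by $|\Im(\tau(z-z'))|=|z-z'|$ when $\Re(\tau(z-z'))=0$, so positivity of the distance between the extreme sets, guaranteed by the u.r.p., yields $D_1>0$). Two small cautions, though neither is a genuine gap: the u.r.p.\ gives uniqueness of \emph{representations}, not uniqueness of extreme \emph{points}, so $E_{a,(p,c,s)}(\tau)$ and $E_{a,(\bar p,\bar c,\bar s)}(\tau)$ may not be singletons — this is precisely why the paper defines $D_1,D_2$ as the min and max of $|z-z'|$ over all pairs of extreme points in the two branches, and why one ends up with a $\liminf$/$\limsup$ sandwich rather than a limit; and your handling of the argmin-stability step via truncation levels $n(k)\to\infty$ is more roundabout than needed — it is cleaner to extract, by compactness, subsequential limits $z_k\to z\in E_{a,(p,c,s)}(\tau)$ and $z_k'\to z'\in E_{a,(\bar p,\bar c,\bar s)}(\tau)$ of the actual minimizers (as the paper's own proof of Lemma~\ref{lem:psi finito} does), after which the ratio converges to $|z-z'|\in[D_1,D_2]$ directly.
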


The constants $D_1$ and $D_2$ are given by:
\begin{align*}
	D_1 &=\min\{ |z - z'|; z\in E_{a,(p,c,s)}(\tau) , z'\in E_{a,(\bar p,\bar c,\bar s)}(\tau) \}, \\
	D_2 &=\max\{ |z - z'|; z\in E_{a,(p,c,s)}(\tau) , z'\in E_{a,(\bar p,\bar c,\bar s)}(\tau) \}.
\end{align*}

The u.r.p.\ implies that $E_{a,(p,c,s)}$ and $E_{a,(\bar p,\bar c,\bar s)}$ are disjoint. Since both sets are compact, we obtain that $0< D_1 \leq D_2<\infty$.

As a consequence of Lemma~\ref{lem:central} we get:

\begin{lem}\label{lem:psi finito}
The set $\Psi_a$ is finite for each $a\in\A$.
\end{lem}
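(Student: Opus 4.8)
The plan is to show $\Psi_a$ is finite by combining the quantitative non-degeneracy estimate of Lemma~\ref{lem:central} with the expanding dynamics of $\tau \mapsto \beta_0^{-1}\tau$, or more precisely, its inverse rotation by the irrational angle of $\beta_0$. First I would recall that for $\tau \in \Psi_a$ there are two distinct labels $(p,c,s), (\bar p,\bar c,\bar s) \in \bA_a$ with $v_a(\tau) = v_{a,(p,c,s)}(\tau) = v_{a,(\bar p,\bar c,\bar s)}(\tau)$. Since $\bA_a$ is finite, it suffices to fix a pair of distinct labels and show that the set $\Psi_a^{(p,c,s),(\bar p,\bar c,\bar s)}$ of directions where both coincide with $v_a$ is finite; then $\Psi_a$ is a finite union of finite sets.

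Next I would argue by contradiction: suppose this set is infinite. Since $\SS^1$ is compact, there is an accumulation point $\tau_\infty$, and a sequence $(\tau_k)$ of distinct such directions with $\tau_k \to \tau_\infty$ and $\llbracket \tau_\infty - \tau_k \rrbracket > 0$. At each $\tau_k$ we have $v_{a,(p,c,s)}(\tau_k) = v_{a,(\bar p,\bar c,\bar s)}(\tau_k)$ (both equal $v_a(\tau_k)$), so the numerator $|v_{a,(p,c,s)}(\tau_k) - v_{a,(\bar p,\bar c,\bar s)}(\tau_k)|$ vanishes for every $k$. On the other hand, $v_a(\tau_\infty) = v_{a,(p,c,s)}(\tau_\infty) = v_{a,(\bar p,\bar c,\bar s)}(\tau_\infty)$ as well (the identity persists at the limit by continuity of each $v_{a,(\cdot)}$, which follows from Lemma~\ref{lem:continuity of v_a} applied to each label, or directly from compactness of the fractals), so $\tau_\infty$ satisfies the hypothesis of Lemma~\ref{lem:central} with this pair of labels. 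But then Lemma~\ref{lem:central} forces
\[
D_1 \leq \liminf_{k\to\infty} \frac{|v_{a,(p,c,s)}(\tau_k) - v_{a,(\bar p,\bar c,\bar s)}(\tau_k)|}{\llbracket \tau_\infty - \tau_k \rrbracket} = \liminf_{k\to\infty} \frac{0}{\llbracket \tau_\infty - \tau_k \rrbracket} = 0,
\]
contradicting $D_1 > 0$, which the u.r.p.\ guarantees since $E_{a,(p,c,s)}(\tau_\infty)$ and $E_{a,(\bar p,\bar c,\bar s)}(\tau_\infty)$ are disjoint compact sets.

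The main obstacle I anticipate is verifying cleanly that the limiting direction $\tau_\infty$ genuinely satisfies the hypotheses of Lemma~\ref{lem:central}, i.e.\ that the coincidence $v_{a,(p,c,s)}(\tau_\infty) = v_{a,(\bar p,\bar c,\bar s)}(\tau_\infty) = v_a(\tau_\infty)$ holds and that the two extreme-point sets are nonempty (so that $D_1$ is actually defined and positive). Nonemptiness of $E_{a,(p,c,s)}(\tau_\infty)$ follows because $\F_{a,(p,c,s)}$ is compact (it is the continuous image of the compact space $\S_{a,(p,c,s)}$, or a closed subset of the compact $\F_a$) so the minimum defining $v_{a,(p,c,s)}$ is attained; and the coincidence of values at $\tau_\infty$ is exactly the continuity of each $v_{a,(\cdot)}$ together with the fact that $\tau_k \in \Psi_a^{(p,c,s),(\bar p,\bar c,\bar s)}$ means $v_{a,(p,c,s)}(\tau_k)=v_{a,(\bar p,\bar c,\bar s)}(\tau_k)=\min_{(p',c',s')}v_{a,(p',c',s')}(\tau_k)$, and one passes to the limit in each of these finitely many equalities and inequalities. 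Once that is in place, the contradiction is immediate, and summing over the finitely many pairs of labels and then over $a \in \A$ (for the claim $\Psi = \bigcup_a \Psi_a$ is finite, used implicitly later) finishes the proof.
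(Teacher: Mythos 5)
Your proposal is correct and follows essentially the same route as the paper: extract a convergent sequence in $\Psi_a$ with a fixed pair of distinct labels, pass to the limit by continuity to verify the hypotheses of Lemma~\ref{lem:central}, and derive $D_1 = 0$ from the vanishing numerator, contradicting the u.r.p.\ (which gives $D_1 > 0$ since the two extreme-point sets are disjoint and compact). The paper phrases the contradiction as the nonemptiness of $E_{a,(p,c,s)}(\tau)\cap E_{a,(\bar p,\bar c,\bar s)}(\tau)$, but this is the same statement as $D_1=0$.
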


\begin{proof}
Suppose by contradiction that for some $a\in\A$ the set $\Psi_a$ is infinite and let $(\tau_k)_{k\geq 1}$ be a sequence in $\Psi_a$ that converges to $\tau \in \SS^1$ such that $\tau_k \neq \tau$ for every $k \geq 1$. 

Without loss of generality we may assume that there exist $(p,c,s) \neq (\bar p,\bar c,\bar s)$ in $\bA_a$ such that $E_{a,(p,c,s)}(\tau_k) \neq \varnothing$ and $E_{a,(\bar p,\bar c,\bar s)}(\tau_k) \neq \varnothing $ for every $k \geq 1$. By continuity of $v_a$, $v_{a,(p,c,s)}$ and $v_{a,(\bar p,\bar c,\bar s)}$, we have that $\tau \in \Psi_a$. Indeed, this set is closed.

Consider sequences $z_k\in\F_{a,(p,c,s)}$ and $z'_k\in\F_{a,(\bar p,\bar c,\bar s)}$ attaining the minimum for the direction $\tau_k$, that is, $v_a(\tau_k) = \Re(\tau_k z_k) = \Re(\tau_k z'_k)$. We may assume that $z_k\rightarrow z$ and $z'_k\rightarrow z'$ when $k \to \infty$. Then clearly $z \in E_{a,(p,c,s)}(\tau)$ and $z' \in E_{a,(\bar p,\bar c,\bar s)}(\tau)$. Thus the hypotheses of Lemma~\ref{lem:central} are satisfied.
Since $v_{a,(\bar p,\bar c,\bar s)}(\tau_k) = v_{a,(p,c,s)}(\tau_k)$ for all $k \geq 1$, this lemma ensures that 
$\min\{ |z - z' |; z\in E_{a,( p, c, s)}(\tau) , z'\in E_{a,(\bar p,\bar c,\bar s)}(\tau) \} =0$. Therefore, the intersection
$E_{a,(p,c,s)}(\tau) \cap E_{a,(\bar p,\bar c,\bar s)}(\tau) $ is nonempty, since these sets 
are closed, contradicting the u.r.p.
\end{proof}

\section{\texorpdfstring{The skew product $H$}{The map H}}\label{sec:the map H}

In this section we thoroughly study the map $H$. We always assume the u.r.p.\ for $\beta$ and $\Gamma$. We will see that $H$ is conjugate to a piecewise translation on the interval, that it has finitely many minimal components and that, when restricted to each minimal component, it is an interval exchange map.

Recall that $\bA_a$ is the set of possible triples $(p,c,s) $ in $\A^* \times \A \times \A^*$ such that $\sigma(a)=pcs$ and that $\bA = \bigcup_{a\in\A} \bA_a$.

\begin{lem}\label{lem:itm}
For each $a \in \A$ and $(p, c, s) \in \bar{\A}_a$ the set 
$$J_{a,(p,c,s)}=\{ \tau \in \SS^1; v_a(\tau) = v_{a,(p,c,s)}(\tau) \}$$ 
is a finite union of closed intervals.
Moreover, if $(p,c,s)$ and $(\bar p,\bar c,\bar s)$ are different elements in $\bA_a$, then the interiors of $J_{a,(p,c,s)}$ and $J_{a,(\bar p,\bar c,\bar s)}$ are disjoint.

\end{lem}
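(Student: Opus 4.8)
The plan is to establish the second assertion first, since it is an immediate consequence of the u.r.p., and then to analyze the structure of $J_{a,(p,c,s)}$ as a subset of $\SS^1$. For the disjointness of interiors: if $\tau$ lies in the interiors of both $J_{a,(p,c,s)}$ and $J_{a,(\bar p,\bar c,\bar s)}$, then $v_a(\tau) = v_{a,(p,c,s)}(\tau) = v_{a,(\bar p,\bar c,\bar s)}(\tau)$ and moreover this equality persists on a neighborhood of $\tau$, so $\tau$ is a non-isolated point of $\Psi_a$, contradicting Lemma~\ref{lem:psi finito}. (Alternatively, directly: by the u.r.p.\ the extreme-point sets $E_{a,(p,c,s)}(\tau)$ and $E_{a,(\bar p,\bar c,\bar s)}(\tau)$ are disjoint, so $D_1 > 0$ in Lemma~\ref{lem:central}, which forces $v_{a,(p,c,s)} \ne v_{a,(\bar p,\bar c,\bar s)}$ on a punctured neighborhood of $\tau$; this already handles interiors.)

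For the main claim, the idea is to show that $J_{a,(p,c,s)}$ is closed and has finite boundary. Closedness is immediate from the continuity of $v_a$ and $v_{a,(p,c,s)}$ (Lemma~\ref{lem:continuity of v_a} and the analogous continuity of $v_{a,(p,c,s)}$, which holds since the fractals $\F_{a,(p,c,s)}$ are compact and vary continuously). A compact subset of $\SS^1$ is a finite union of closed intervals precisely when its boundary is finite, so it suffices to bound $\#\partial J_{a,(p,c,s)}$. The key observation is that any boundary point $\tau$ of $J_{a,(p,c,s)}$ is a direction at which $v_a(\tau) = v_{a,(p,c,s)}(\tau)$ but, approaching from one side, $v_a < v_{a,(p,c,s)}$, so the infimum defining $v_a$ is attained by some other label $(\bar p,\bar c,\bar s)$ in the limit; by continuity $v_a(\tau) = v_{a,(\bar p,\bar c,\bar s)}(\tau)$ as well, hence $\tau \in \Psi_a$. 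Since $\Psi_a$ is finite by Lemma~\ref{lem:psi finito}, we get $\partial J_{a,(p,c,s)} \subseteq \Psi_a$ finite, and we are done.

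The step I expect to require the most care is verifying that boundary points genuinely land in $\Psi_a$ rather than merely showing $v_a(\tau) = v_{a,(p,c,s)}(\tau)$. Concretely, one must rule out the degenerate possibility that $\tau$ is an endpoint of an interval of $J_{a,(p,c,s)}$ simply because $v_a \equiv v_{a,(p,c,s)}$ on one side and the other side is also inside $J_{a,(p,c,s)}$ — but that would make $\tau$ interior, not boundary, so such $\tau$ are excluded. The remaining scenario is that on (say) the left of $\tau$ one has $v_a(\tau_k) = v_{a,(p,c,s)}(\tau_k)$ while on the right $v_a(\tau_k) < v_{a,(p,c,s)}(\tau_k)$; picking for the right-hand points a label $(\bar p^{(k)}, \bar c^{(k)}, \bar s^{(k)})$ realizing $v_a(\tau_k) = v_{a,(\bar p^{(k)},\bar c^{(k)},\bar s^{(k)})}(\tau_k)$ and passing to a subsequence where this label is constant (the set $\bA_a$ being finite), continuity gives $v_a(\tau) = v_{a,(\bar p,\bar c,\bar s)}(\tau)$ with $(\bar p,\bar c,\bar s) \ne (p,c,s)$, i.e.\ $\tau \in \Psi_a$. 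This is a routine but slightly fiddly compactness argument, entirely analogous to the proof of Lemma~\ref{lem:psi finito}.
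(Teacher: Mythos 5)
Your proof is correct and follows essentially the same strategy as the paper: establish that $J_{a,(p,c,s)}$ is closed, show that $\partial J_{a,(p,c,s)} \subseteq \Psi_a$, and invoke the finiteness of $\Psi_a$ (Lemma~\ref{lem:psi finito}) both for the finite-union-of-closed-intervals claim and for the disjointness of interiors. You spell out the compactness argument showing boundary points land in $\Psi_a$, which the paper merely asserts in one line; the only cosmetic slip is the phrase that the fractals $\F_{a,(p,c,s)}$ ``vary continuously'' (they are fixed sets, and continuity of $v_{a,(p,c,s)}$ follows from compactness of $\F_{a,(p,c,s)}$ alone).
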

\begin{proof}
We prove that each $J_{a, (p, c, s)}$ is closed and has a finite number of connected components. Let $(\tau_k)_{k \geq 1}$ be a sequence in $J_{a, (p, c, s)}$ that converges to $\tau \in \SS^1$. Let $(z_k)_{k \geq 1}$ be a sequence in $\F_{a,(p, c, s)}$ such that $\Re( \tau_k z_k ) = v_a(\tau_k)$. Since $\F_{a,(p, c, s)}$ is compact, we can assume that $(z_k)_{k \geq 1}$ converges to $z \in \F_{a,(p, c, s)}$. Therefore, by continuity of $v_a$ we have that $\Re(\tau z) = v_a(\tau)$. By definition, we conclude that $\tau \in J_{a, (p, c, s)}$ and thus $J_{a, (p, c, s)}$ is closed. 
	
Now, since each of the sets $J_{a, (p, c, s)}$ is closed, then the boundary of a connected component in $J_{a, (p, c, s)}$ is contained in $\Psi_a$. By the u.r.p., $\Psi_a$ is finite (see Lemma \ref{lem:psi finito}). Then each $J_{a, (p, c, s)}$ has finitely many connected components and thus is a finite union of closed intervals.
	
Finally, $J_{a,(p,c,s)} \cap J_{a,(\bar p,\bar c,\bar s)}$ is contained in
$\Psi_a $. So again by finiteness of $\Psi_a$ their interiors are disjoint.
\end{proof}
Notice that, for each $a \in \A$, the union of the closed intervals composing the sets
$J_{a, (p, c, s)}$ covers $\SS^1$. Therefore, if each of such closed intervals is 
redefined to be left-closed and right-open we get a partition of $\SS^1$ by intervals.

Recall that $X = \SS^1 \times \bA$ and that $H\colon X \to X$ is given by
$$H(\tau, (q, a, r)) = (\beta_0^{-1} \tau, (p, c, s))$$
if $v_a(\tau)=v_{a,(p,c,s)}(\tau)$. Equivalently, $H(\tau, (q, a, r)) = (\beta_0^{-1} \tau, (p, c, s))$ if $\tau \in J_{a,(p,c,s)}$. The definition is ambiguous if $\tau \in \Psi_a$ (there is more than one choice for $(p,c,s)$).
Nevertheless, by Lemma \ref{lem:psi finito}, $\Psi$ is finite when the u.r.p.\ holds, so $H$ is well defined and continuous except at finitely many points. 
Therefore, we can fix the ambiguity by setting $H$ to be right-continuous. This is possible since the ambiguities are determined by the boundaries of the closed intervals defining each $J_{a, (p, c, s)}$.
Observe that the definition of $H$ is independent of $q$ and $r$.

\subsection{\texorpdfstring{Orbits of $\bm{H}$ and extreme points}{Orbits of H and extreme points}}
Let $\H$ be the set of all possible maps defined as $H$ by omitting the right-continuous hypothesis, including $H$. As discussed in previous paragraph, this set is clearly finite. 
Also, any element in $\H$ is aperiodic because $\beta_0$ is not a root of unity.
First we notice that:
\begin{rem}\label{rem:exist H}
The existence of $\tilde{H} \in\H$ satisfying 
$\tilde{H}(\tau, (q,a,r)) = (\beta_0^{-1}\tau, (p,c,s))$ is equivalent to the fact that $v_a(\tau)=v_{a,(p,c,s)}(\tau)$, \emph{i.e.}, it is equivalent to the existence of $x\in\S_{a,(p,c,s)}$
such that $v_a(\tau)=\Re(\tau \z_a(x))$. 	
\end{rem}

Let $\pi_{\SS^1} \colon X \to \SS^1$ and $\pi_{\bar{\A}} \colon X \to \bar{\A}$ be the projections to the first and second coordinates of $X$ respectively.

Lemma \ref{lem:intutitionH} below establishes a relation between the orbits of maps in $\H$ with the representations in $\S_a$ of extreme points in $\F_a$ for any direction. Indeed, it tell us that such representations are the same as forward orbits of maps in $\H$. We remark that such orbits are by definition in $\S_a$.

To prove it we recall Lemma 5.6 in \cite{CGM}:

\begin{lem}\label{lem:cont-prop}
If $x=(p^x_m, c^x_m, s^x_m)_{m\geq 1} \in \S_a$ is a representation of an extreme point in $\F_a$ for the direction $\tau$, then the shift 
$S(x)=(p^x_{m+1}, c^x_{m+1}, s^x_{m+1})_{m\geq 1} \in \S_{c_1^x}$ is a representation of an extreme point in $\F_{c_1^x}$ for the direction $\beta_0^{-1}\tau $. 
\end{lem}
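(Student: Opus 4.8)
The plan is to exploit the self-similar structure of the fractals $\F_a$ coming from the substitution; the u.r.p.\ is not needed here. First I would record the basic recursion obtained by splitting off the first term of the defining series. For $x=(p^x_m,c^x_m,s^x_m)_{m\geq 1}\in\S_a$ one has $S(x)\in\S_{c^x_1}$, since $(p^x_2,c^x_2,s^x_2)\in\bA_{c^x_1}$, and
\[
\z_a(x)=\beta^{-1}\gamma(p^x_1)+\beta^{-1}\sum_{m\geq 1}\beta^{-m}\gamma(p^x_{m+1})=\beta^{-1}\bigl(\gamma(p^x_1)+\z_{c^x_1}(S(x))\bigr).
\]
Conversely, prepending the label $(p^x_1,c^x_1,s^x_1)\in\bA_a$ to any $y\in\S_{c^x_1}$ produces an element of $\S_{a,(p^x_1,c^x_1,s^x_1)}$, so $\beta^{-1}\gamma(p^x_1)+\beta^{-1}\z_{c^x_1}(y)\in\F_{a,(p^x_1,c^x_1,s^x_1)}\subseteq\F_a$; equivalently $\F_{a,(p,c,s)}=\beta^{-1}\gamma(p)+\beta^{-1}\F_c$ for every $(p,c,s)\in\bA_a$.

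Next I would use the hypothesis that $z:=\z_a(x)$ is extreme for $\tau$, i.e.\ $\Re(\tau z)\leq\Re(\tau w)$ for all $w\in\F_a$. Applying this with $w=\beta^{-1}\gamma(p^x_1)+\beta^{-1}\z_{c^x_1}(y)$ as $y$ ranges over $\S_{c^x_1}$, and substituting the recursion above for $z$ on the left-hand side, the common summand $\Re\bigl(\tau\beta^{-1}\gamma(p^x_1)\bigr)$ cancels, leaving
\[
\Re\bigl(\tau\beta^{-1}\z_{c^x_1}(S(x))\bigr)\leq\Re\bigl(\tau\beta^{-1}\z_{c^x_1}(y)\bigr)\qquad\text{for all }y\in\S_{c^x_1}.
\]

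Finally I would pass to polar coordinates: writing $\beta^{-1}=|\beta|^{-1}\beta_0^{-1}$ with $\beta_0^{-1}\tau\in\SS^1$, division of the displayed inequality by the positive constant $|\beta|^{-1}$ shows that $\z_{c^x_1}(S(x))$ minimises $w\mapsto\Re\bigl((\beta_0^{-1}\tau)\,w\bigr)$ over $\F_{c^x_1}$, that is, it is an extreme point of $\F_{c^x_1}$ for the direction $\beta_0^{-1}\tau$, which is the assertion. There is no genuine obstacle; the only points demanding a little care are checking that prepending a label keeps the sequence inside $\S_a$ (so that the comparison points really lie in $\F_a$) and remembering that $\beta^{-1}$ is not on the unit circle, so one must peel off its (harmless, positive) modulus and keep only its argument $\beta_0^{-1}$ before invoking the definition of $v_{c^x_1}$.
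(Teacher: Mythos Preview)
Your argument is correct. The paper does not prove this lemma at all: it is merely recalled as Lemma~5.6 of \cite{CGM} and used as a black box (``To prove it we recall Lemma~5.6 in \cite{CGM}''). Your write-up supplies the standard proof, exploiting the self-similarity identity $\F_{a,(p,c,s)}=\beta^{-1}\gamma(p)+\beta^{-1}\F_c$ and the factorisation $\beta^{-1}=|\beta|^{-1}\beta_0^{-1}$; this is exactly the computation one expects and is presumably what the cited reference does as well. One cosmetic remark: from Section~\ref{sec:Extreme points} onward the paper fixes the eigenvector $\Gamma$, so in the context of Section~\ref{sec:the map H} the recursion should be written with $\Gamma$ rather than a generic $\gamma$, but this does not affect the argument.
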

This is called the \emph{continuation property}.

\begin{lem}\label{lem:intutitionH}
Let $\tau \in \SS^1$ and $a \in \A$. A sequence $x \in \S_a$ is the representation of an extreme point in $E_a(\tau)$ if and only if there exists $\tilde{H} \in \H$ such that
$$(p_m^x, c_m^x, s_m^x) = \pi_{\bar \A}(\tilde{H}^{m}(\tau, (q,a,r))) \text{ for all } m\geq 1,$$
for any $(q,a,r) \in \bA$. 
\end{lem}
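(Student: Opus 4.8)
The plan is to prove both implications by induction, using the continuation property (Lemma~\ref{lem:cont-prop}) in one direction and Remark~\ref{rem:exist H} together with a ``backward'' version of the continuation property in the other. Throughout, recall that whether $\tilde H(\tau,(q,a,r)) = (\beta_0^{-1}\tau,(p,c,s))$ holds for some $\tilde H \in \H$ is, by Remark~\ref{rem:exist H}, exactly the condition $v_a(\tau) = v_{a,(p,c,s)}(\tau)$, which by definition of $\bA_a$ and the maps $\z_a$ is equivalent to the existence of $x \in \S_{a,(p,c,s)}$ with $v_a(\tau) = \Re(\tau\z_a(x))$, that is, a representation of an extreme point in $E_{a,(p,c,s)}(\tau)$.

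First I would prove the forward implication: if $x = (p^x_m,c^x_m,s^x_m)_{m\geq 1} \in \S_a$ represents an extreme point of $E_a(\tau)$, then iterating Lemma~\ref{lem:cont-prop} gives that for each $m \geq 0$ the shift $S^m(x) \in \S_{c^x_m}$ (with the convention $c^x_0 = a$) represents an extreme point of $E_{c^x_m}(\beta_0^{-m}\tau)$. In particular $\z_{c^x_{m-1}}(S^{m-1}(x))$ is an extreme point of $\F_{c^x_{m-1},(p^x_m,c^x_m,s^x_m)}$ for the direction $\beta_0^{-(m-1)}\tau$, so $v_{c^x_{m-1}}(\beta_0^{-(m-1)}\tau) = v_{c^x_{m-1},(p^x_m,c^x_m,s^x_m)}(\beta_0^{-(m-1)}\tau)$. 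By Remark~\ref{rem:exist H}, for each $m$ there is a choice of label making some element of $\H$ send $(\beta_0^{-(m-1)}\tau,(q,c^x_{m-1},r))$ to $(\beta_0^{-m}\tau,(p^x_m,c^x_m,s^x_m))$; since $\H$ consists of \emph{all} maps obtained from the various choices resolving ambiguities on $\Psi$, and since these choices can be made coordinate by coordinate, one can assemble a single $\tilde H \in \H$ realizing all of these steps simultaneously. Then $\pi_{\bA}(\tilde H^m(\tau,(q,a,r))) = (p^x_m,c^x_m,s^x_m)$ for all $m \geq 1$, as desired; note the output is independent of $(q,r)$ since the definition of any map in $\H$ ignores the prefix and suffix of its input label.

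For the converse, suppose $\tilde H \in \H$ satisfies $(p^x_m,c^x_m,s^x_m) = \pi_{\bA}(\tilde H^m(\tau,(q,a,r)))$ for all $m \geq 1$. Writing $\tilde H^m(\tau,(q,a,r)) = (\beta_0^{-m}\tau,(p^x_m,c^x_m,s^x_m))$, the defining property of $\tilde H$ gives $\sigma(c^x_m) = p^x_{m+1}c^x_{m+1}s^x_{m+1}$ for all $m \geq 0$ (with $c^x_0 = a$) and $v_{c^x_m}(\beta_0^{-m}\tau) = v_{c^x_m,(p^x_{m+1},c^x_{m+1},s^x_{m+1})}(\beta_0^{-m}\tau)$ for all $m \geq 0$, so $x = (p^x_m,c^x_m,s^x_m)_{m\geq 1}$ is a well-defined element of $\S_a$. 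It remains to show $\z_a(x) \in E_a(\tau)$. The natural argument is: from $v_a(\tau) = v_{a,(p^x_1,c^x_1,s^x_1)}(\tau)$ pick a representation $y \in \S_{a,(p^x_1,c^x_1,s^x_1)}$ of an extreme point of $\F_a$; by the contraction identity $\z_a(y) = \beta^{-1}\gamma(p^x_1) + \beta^{-1}\z_{c^x_1}(S(y))$ and the analogous decomposition for $\z_a(x)$, together with the fact (which I would extract from the continuation property, or prove directly via $\z_a(y) = \beta^{-1}(\gamma(p^x_1) + \z_{c^x_1}(S(y)))$ and the relation $\Re(\tau\z_a(y)) = v_a(\tau)$ iff $\Re(\beta_0^{-1}\tau\,\z_{c^x_1}(S(y))) = v_{c^x_1}(\beta_0^{-1}\tau) - \Re(\beta_0^{-1}\tau\gamma(p^x_1))/|\beta|$, i.e.\ the ``same-direction'' shifted extremality) that $S(y)$ represents an extreme point of $E_{c^x_1}(\beta_0^{-1}\tau)$. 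Iterating, $S^m(x)$ and $S^m(y)$ both represent extreme points of $E_{c^x_m}(\beta_0^{-m}\tau)$ for every $m$. By the u.r.p., the extreme point of $E_{c^x_m}(\beta_0^{-m}\tau)$ lying in the cylinder determined by $(p^x_{m+1},c^x_{m+1},s^x_{m+1})$ is \emph{unique}, and since $S^m(x)$ and $S^m(y)$ start with the same label $(p^x_{m+1},c^x_{m+1},s^x_{m+1})$, we get $\z_{c^x_m}(S^m(x)) = \z_{c^x_m}(S^m(y))$. Letting $m \to \infty$, from $\z_a(x) = \z^{(m)}_a(\text{prefix}) + \beta^{-m}\z_{c^x_m}(S^m(x))$ and $\beta^{-m} \to 0$, we conclude $\z_a(x) = \z_a(y) \in E_a(\tau)$.

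The main obstacle I anticipate is the ``backward'' continuation step in the converse: Lemma~\ref{lem:cont-prop} only goes forward (from a representation of an extreme point to a representation of an extreme point at the shifted direction), whereas here I need that the \emph{specific} sequence $x$ produced by the orbit of $\tilde H$ represents an extreme point, knowing only that at each finite stage the correct \emph{label} is chosen. The key technical point making this work is the u.r.p.: it forces the extreme point of $E_{c^x_m}(\beta_0^{-m}\tau)$ inside a given first-label cylinder to be unique, so that matching the label at every stage, combined with the comparison sequence $y$ supplied by Remark~\ref{rem:exist H} at stage $0$ and propagated forward by Lemma~\ref{lem:cont-prop}, pins down $\z_a(x)$ uniquely as that extreme point. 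One should double-check that the u.r.p.\ as stated (uniqueness of the representation of each extreme point of $\F_a$) indeed yields uniqueness of the extreme point within each cylinder $\F_{a,(p,c,s)}$ for a given direction — this is essentially the remark following Definition~\ref{def:uniquerep} that extreme points do not lie in intersections $\F_{a,(p,c,s)} \cap \F_{a,(\bar p,\bar c,\bar s)}$, combined with the fact that each $E_{a,(p,c,s)}(\tau)$ is a singleton when nonempty, which itself follows from u.r.p.\ and compactness as in the discussion after Lemma~\ref{lem:central}.
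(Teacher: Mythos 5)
Your forward implication is essentially the paper's argument (continuation property plus Remark~\ref{rem:exist H} at each level, then assembling one map of $\H$; the paper justifies the assembly by noting that, $\beta_0$ not being a root of unity, the orbit $(\beta_0^{-m}\tau)_{m\geq 0}$ meets the finite set of ambiguity points at pairwise distinct times, so the finitely many non-default choices are compatible). The converse, however, has a genuine gap --- in fact three related ones. First, the assertion that ``$S^m(x)$ represents an extreme point of $E_{c^x_m}(\beta_0^{-m}\tau)$ for every $m$'' is precisely the statement being proved (at shifted directions), so it cannot be fed into the argument. Second, the claim that $E_{a,(p,c,s)}(\tau)$ is a singleton when nonempty does not follow from the u.r.p.: the u.r.p.\ gives uniqueness of the \emph{representation} of each extreme point, not uniqueness of the extreme point within a subfractal; the paper only deduces that $E_{a,(p,c,s)}(\tau)$ and $E_{a,(\bar p,\bar c,\bar s)}(\tau)$ are disjoint compact sets (which is why Lemma~\ref{lem:central} carries two constants $D_1 \leq D_2$ rather than one). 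Third, even granting both points, your comparison sequence $y$ is only known to agree with $x$ in its \emph{first} label; the continuation property sends $S^m(y)$ to an extreme point in whatever cylinder $y$ happens to lie in, and nothing forces $(p^y_{m+1},c^y_{m+1},s^y_{m+1}) = (p^x_{m+1},c^x_{m+1},s^x_{m+1})$ for $m \geq 1$ --- in particular at the finitely many $m$ with $\beta_0^{-m}\tau \in \Psi$ the extreme points of $\F_{c^x_m}$ may sit in several cylinders. Since your final identity $\z_a(x) = \z_a(y)$ needs the prefixes of $x$ and $y$ to coincide to all orders, the induction does not close.

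The paper's converse avoids all of this by telescoping \emph{values} rather than representations: at each level $m$, Remark~\ref{rem:exist H} supplies some freshly chosen, not necessarily mutually coherent, $x_m \in \S_{c^x_m,(p^x_{m+1},c^x_{m+1},s^x_{m+1})}$ representing an extreme point for the direction $\beta_0^{-m}\tau$; the continuation property applied to $x_m$ gives $v_{c^x_m}(\beta_0^{-m}\tau) = \Re(\beta^{-1}\beta_0^{-m}\tau\,\Gamma(p^x_{m+1})) + |\beta|^{-1} v_{c^x_{m+1}}(\beta_0^{-(m+1)}\tau)$, and iterating yields $v_a(\tau) = \Re(\tau\z_a^{(n)}(x)) + |\beta|^{-n}v_{c^x_n}(\beta_0^{-n}\tau)$, which tends to $\Re(\tau\z_a(x))$. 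No u.r.p.\ and no matching of full label sequences is needed; you should replace your converse with this computation.
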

\begin{proof}
Let $x=(p^x_m, c^x_m, s^x_m)_{m\geq 1} \in \S_a$ be the representation of an extreme point for the direction $\tau$. That is, $v_a(\tau) = \Re( \tau\z_a(x))$.
Fix some $m\geq 0$ and put $p^x_0=q$, $c^x_0=a$ and $s^x_0=r$, where $q$ and $r$ are chosen such that $(q,a,r) \in \bA$. 
By the continuation property (Lemma \ref{lem:cont-prop}), the shifted sequence 
$S^{m}(x)$ belongs to $\S_{c^x_m, (p^x_{m+1}, c^x_{m+1}, s^x_{m+1})}$ and is a representation of an extreme point for the direction $\beta_0^{-m}\tau$.
From Remark \ref{rem:exist H}, there exists ${H}_m\in\H$ such that 
$$ {H}_m(\beta_0^{-m}\tau, (p^x_m, c^x_m, s^x_m)) = (\beta_0^{-(m+1)}\tau, (p^x_{m+1}, c^x_{m+1}, s^x_{m+1})).$$
Since the ambiguity points to define $H$ are finite, for some $m_0\geq 1$ the sequence 
$(\beta_0^{-m}\tau, (p^x_{m}, c^x_{m}, s^x_{m}))_{m\geq m_0}$ does not contain any such point.
Thus, $H_m$ can be taken to be $H$ for all $m\geq m_0$. Since $\beta_0$ is not a root of unity, the finite sequence 
$(\beta_0^{-m}\tau, (p^x_{m}, c^x_{m}, s^x_{m}))_{0\leq m \leq m_0 - 1}$ cannot repeat any ambiguity point. 
Thus the map $\tilde{H}$ in $\H$ that is equal to $H_m$ in 
$(\beta_0^{-m}\tau, (p^x_{m}, c^x_{m}, s^x_{m}))$ for $0\leq m \leq m_0 - 1$ 
and equal to $H$ elsewhere is well defined. We conclude that
$$\tilde{H}(\beta_0^{-m}\tau, (p^x_m, c^x_m, s^x_m)) = (\beta_0^{-(m+1)}\tau, (p^x_{m+1}, c^x_{m+1}, s^x_{m+1})), \text{ for all } m\geq 0.$$

Conversely, suppose that $x=(p^x_m, c^x_m, s^x_m)_{m\geq 1} \in \S_a$ 
is obtained from the trajectory by some 
$\tilde{H}\in\H$ of $(\tau, (q,a,r))$ with $(q,a,r) \in \bA$. Set $p^x_0=q$, $c^x_0=a$ and $s^x_0=r$. 

From Remark \ref{rem:exist H}, if $\tilde{H}(\beta_0^{-m}\tau, (p^x_m, c^x_m, s^x_m))= (\beta_0^{-(m+1)}\tau, (p^x_{m+1}, c^x_{m+1}, s^x_{m+1}))$ for $m\geq 0$, then there exists a representation $x_m \in \S_{c^x_m,(p^x_{m+1}, c^x_{m+1}, s^x_{m+1})}$ of an extreme point of the fractal $\F_{c^x_m}$ in the direction
$\beta_0^{-m} \tau$: $v_{c^x_m}(\beta_0^{-m}\tau)=\Re(\beta_0^{-m}\tau \z_{c^x_m}(x_m))$.

Then, using recursively the properties of each $x_m$ and the continuation property we get:
\begin{align*}
v_a(\tau) &= \Re(\tau \z_a(x_0)) \\
&= \Re(\beta^{-1}\tau\Gamma(p^x_1)) + |\beta|^{-1} v_{c^x_1}(\beta_0^{-1}\tau) \\
&= \Re(\beta^{-1}\tau\Gamma(p^x_1)) + |\beta|^{-1} \Re(\beta_0^{-1}\tau z_{c^x_1}(x_1)) \\						& = \Re(\beta^{-1}\tau\Gamma(p^x_1)) + \Re(\beta^{-2}\tau\Gamma(p^x_2)) + |\beta|^{-2} v_{c^x_2}(\beta_0^{-2}\tau) \\
			&= \sum_{m=1}^n \Re( \beta^{-m}\tau\Gamma(p^x_m) ) + |\beta|^{-n} v_{c^x_n}(\beta_0^{-n}\tau)\\
			&=\Re\left(\tau\sum_{m=1}^n \beta^{-m}\Gamma(p^x_m)\right)+ |\beta|^{-n} v_{c^x_n}(\beta_0^{-n}\tau)\\
			&= \Re(\tau \z_a^{(n)}(x))+ |\beta|^{-n} v_{c^x_n}(\beta_0^{-n}\tau).
\end{align*}
Taking the limit when $n \to \infty$ we conclude that $\Re(\tau \z_a(x)) = v_{a}(\tau)$.
\end{proof}

\subsection{Minimal components of maps in \texorpdfstring{$\H$}{H}}
Any map in $\H$ can be visualized as an \emph{interval translation map} (i.t.m.)
We recall that a map defined in an interval $I$ is an i.t.m.\ if it is a piecewise translation with finitely many discontinuities. Contrary to i.e.m.'s these maps need not be injective or surjective. 
Basic properties of i.t.m.\ can be found in \cite{itm}. 

To simplify notations we only illustrate this construction with the right-continuous map $H$. 
For other maps in $\H$ it is analogous. 
Partition the interval $I=[0,1)$ into $|\bA|$ consecutive intervals of the same length, each one associated with an element $(p,c,s) \in \bA$. Call $I_{(p,c,s)}$ such interval and assume it is 
left-closed and right-open. Observe that the substitution $\sigma$ associated with the i.e.m.\ $T$ is injective (indeed, the associated matrices are invertible), so for each 
$(p,c,s) \in \bA$ there exists a unique $a \in \A$ such that $\sigma(a)=pcs$.

The map $H$ naturally induces a map on $I$ that we also call $H$ in the following way. For each $(p,c,s) \in \bA$ let 
${\mathcal i}_{(p,c,s)}\colon \SS^1 \to I_{(p,c,s)}$ be an orientation preserving linear identification of both sets such that ${\mathcal i}_{(p,c,s)}(1)$ is the left extreme point of the interval $I_{(p,c,s)}$. 
If $H(\tau,(q,a,r))=(\beta_0^{-1}\tau,(p,c,s))$ on $X$ then $H({\mathcal i}_{(q,a,r)}(\tau))={\mathcal i}_{(p,c,s)}(\beta_0^{-1}\tau)$ on $I$ (see Figure \ref{fig:explainH}). 
Since $H$ on $X$ has finitely many discontinuities, the map $H$ seen on $I$ is an i.t.m. Moreover, since $\beta_0$ is not a root of unity, $H$ is an aperiodic i.t.m.

\begin{figure}
\centering
\includegraphics[width=0.8\textwidth]{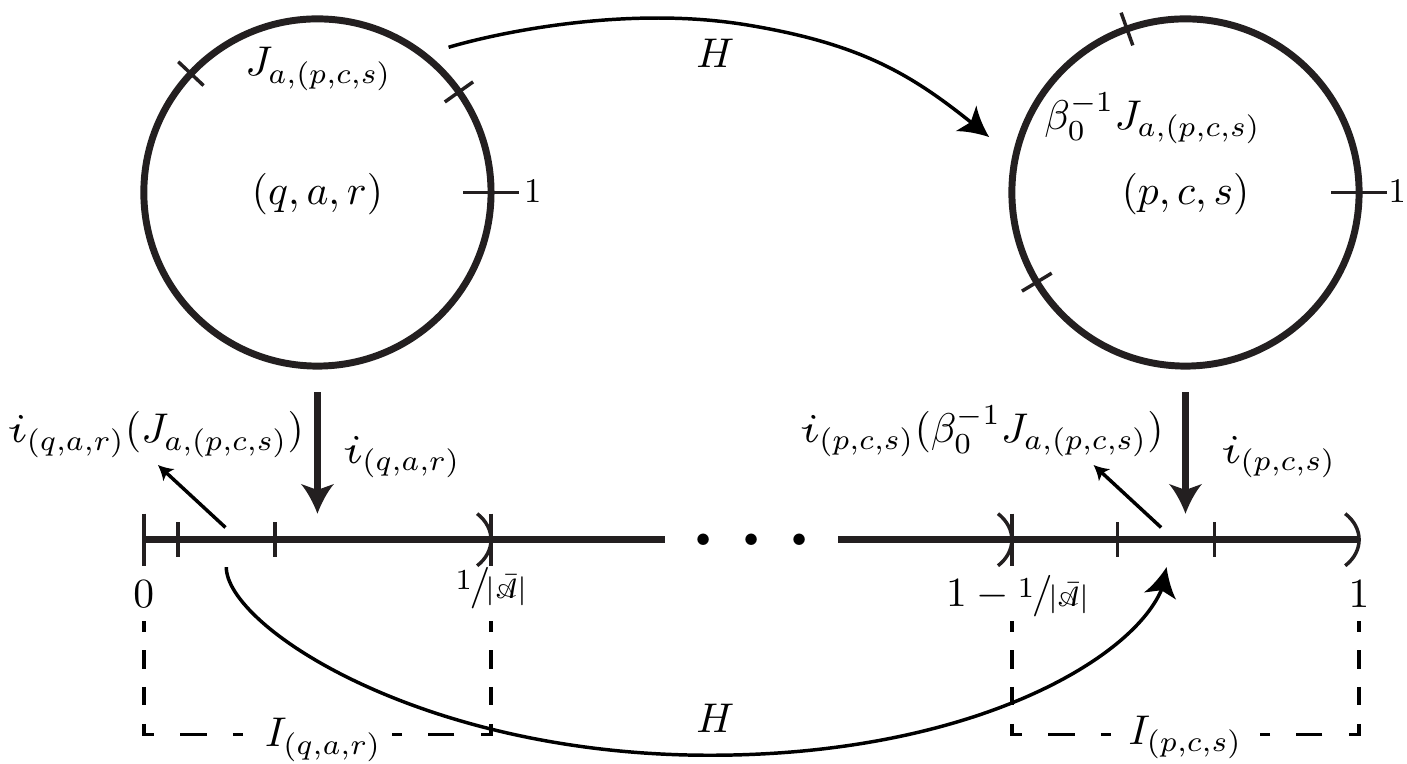}
\caption{The map induced by $H$ on $I$. In the top part we illustrate the action of $H$ on $X=\SS^1 \times \bA$, in the bottom part, the action on $I$.}
\label{fig:explainH}
\end{figure}

We need to define the notion of minimal component for $H$ seen in $I$. Since $H$ is not continuous we need to adapt the classical definition from topological dynamics. This is done by using a standard procedure in i.e.m.\ theory that can be adapted to the context of an i.t.m.\ and which we sketch here. It follows the discussion in Section 2 of \cite{itm}.

First we call $\tilde H \in \H$ the left continuous version of $H$, that is, 
$\tilde H(t)= \lim_{s\nearrow t} H(s)$. Now we define a new space $\hat I$. Let $D$ be the set of discontinuities of $H$ together with its preimages and images by $\tilde H$. 
Then we build the ordered set 
$\hat I=I \cup D^- \cup \{1\}$, where $D^-=\{t^-;t\in D\}$ is a disjoint copy of $D$ putting every point $t^-$ immediately to the left of $t$. That is, we introduce little holes in $I$ at positions $t\in D$ calling the left side of the hole $t^-$ and the right side $t$. The order of $I$ naturally extends to this new set. The set $\hat I$ endowed with the order topology is a compact metric space. Finally, let $\hat H \colon \hat I \to \hat I$ be the map defined as $H$ in $I$, $\hat H(t^-)=(\tilde H(t))^-$ for $t \in D$ and $\hat H(1)$ is defined by continuity (notice that $\hat H$ is increasing in a neighbourhood of $1$).

One proves that $\hat H$ is a continuous map on the compact metric space $\hat I$. Moreover, $\hat H$ leaves $I$ invariant and $\hat H|_{I}$ (the restriction of $\hat H$ to $I$) coincides with $H$ as a map. 

\begin{defn}[\cite{st}]\label{def:minimalset}
We say that $J\subseteq I$ is a \emph{minimal component} for the i.t.m.\ $H$ if $J=\hat J \cap I$, where 
$\hat J$ is a minimal component of $\hat H$. That is, $\hat H(\hat J)=\hat J$ and every point 
in $\hat J$ has a dense orbit by $\hat H$ (for the corresponding topology).
\end{defn}
By definition of $\hat H$, if $\hat J$ is a minimal component for $\hat H$ then its restrictions to $I$ and $D^{-}$ are invariant by $\hat H$. Moreover, if $1 \in \hat J$ then $\hat H(1)=1$. Then, $J=\hat J\cap I$ is strongly invariant by $H$, that is, $H(J)=J$. We also have that 
$J \subseteq \overline{\Orb_H(t)}$ for any $t \in J$, and that two different minimal components are disjoint.

Using the fact that $H$ is an aperiodic i.t.m.\ and Theorem 2.4 in \cite{st} we get:

\begin{lem}\label{lem:finitecomponents}
$H$ has a finite number of minimal components.
\end{lem}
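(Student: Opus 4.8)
The statement to prove is Lemma \ref{lem:finitecomponents}: $H$ has a finite number of minimal components. The proof is indicated to follow from "$H$ is an aperiodic i.t.m." together with Theorem 2.4 in \cite{st}. Let me sketch a proof plan.

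\medskip

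The plan is to invoke the structure theory for interval translation maps developed in \cite{st} and \cite{itm}. First, I would record the two facts about $H$ (viewed as a map on the interval $I$) that have already been established in this section: $H$ is an interval translation map, since it is a piecewise translation of $I$ with finitely many discontinuities; and $H$ is \emph{aperiodic}, since $\beta_0$ is not a root of unity and hence no point of $X$ (equivalently of $I$) is periodic under $H$. These are precisely the standing hypotheses under which the structure theorems of \cite{st} apply.

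\medskip

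Next I would recall the relevant statement from \cite{st} (their Theorem 2.4): for an aperiodic interval translation map $f$ on a bounded interval with $n$ branches, the $\omega$-limit set $\Omega(f) = \bigcap_{k\ge 0} f^k(I)$ decomposes into the union of a finite collection of minimal components together with a finite collection of (open) intervals on which $f$ acts, after a finite number of iterates, as a translation; in particular the number of minimal components is bounded by $n$. Applying this to $f = H$, whose number of branches equals $|\bA|$ (the number of atoms of the partition of $\SS^1$ by the intervals $J_{a,(p,c,s)}$, lifted to $I$ via the identifications ${\mathcal i}_{(p,c,s)}$), immediately gives that $H$ has at most $|\bA|$ minimal components, hence finitely many. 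I would also note that this is consistent with Definition~\ref{def:minimalset}: the minimal components $J$ of $H$ in the sense used here are exactly $\hat J \cap I$ for $\hat J$ a minimal component of the continuous extension $\hat H$ on the compact space $\hat I$, and since $\hat H$ is a continuous map on a compact metric space its minimal sets are pairwise disjoint closed invariant sets; the finiteness passes from $\hat H$ to $H$ because the finitely many minimal sets of $\hat H$ are in bijection with the minimal components of $H$ (the holes $D^-$ and the fixed point $1$, when present, do not create extra minimal components by the remarks following Definition~\ref{def:minimalset}).

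\medskip

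The only subtlety — and the one step I would be careful about — is matching the definition of "minimal component" used in \cite{st} (phrased for a genuine i.t.m.\ on $I$, via the $\omega$-limit set) with Definition~\ref{def:minimalset} of the present paper (phrased via the continuous model $\hat H$ on $\hat I$). This bookkeeping has essentially been done in the paragraphs preceding the lemma: $\hat H$ is continuous on the compact metric space $\hat I$, it leaves $I$ invariant, and $\hat H|_I = H$. Standard compactness then gives that $\hat H$ has minimal sets, each of which is closed and $\hat H$-invariant, and two distinct ones are disjoint; intersecting with $I$ recovers the minimal components of $H$. So the finiteness claim for $H$ reduces to the finiteness of the number of minimal sets of $\hat H$, which is the content of Theorem 2.4 of \cite{st} applied to the aperiodic i.t.m.\ $H$ (the theory there being formulated precisely so as to count these via the branches of $H$). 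Since $|\bA| < \infty$, we conclude that $H$ has finitely many minimal components.
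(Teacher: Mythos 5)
Your proposal is correct and matches the paper's approach exactly: the paper also deduces the lemma directly from the fact that $H$ is an aperiodic interval translation map combined with Theorem 2.4 of the Suzuki--Ito--Aihara reference \cite{st}, and you fill in the same bookkeeping about the continuous model $\hat H$ on $\hat I$ that the paper carries out just before the lemma. One small inaccuracy worth flagging: the number of branches of $H$ on $I$ need not equal $|\bA|$, since each $J_{a,(p,c,s)}$ is a finite union of intervals (Lemma \ref{lem:itm}) and so a single $I_{(q,a,r)}$ may be subdivided into more than $|\bA_a|$ pieces; the count of branches is still finite, which is all that the finiteness conclusion requires.
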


Recall that we have mapped each $t \in I$ to a unique point $(\tau,(p,c,s)) \in X$. This map can be extended to $\hat I$ by sending each $t^-$ to the same $(\tau,(p,c,s)) \in X$ as $t$ for every $t \in D$. We call this map $\mathcal{e}\colon \hat I \to X$.

\begin{lem}\label{lem:finiteunion}
The minimal components of $\hat H$ and $H$ are finite unions of intervals of positive length.
\end{lem}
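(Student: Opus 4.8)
The plan is to show that each minimal component of $\hat H$ has nonempty interior; once this is established, minimality forces it to be a finite union of intervals. First I would observe that the images $\hat H^k(\hat J)=\hat J$ and that, since $H$ is an i.t.m.\ with finitely many branches of continuity, the connected components of $\hat J$ are permuted in a controlled way by $\hat H$: the image of an interval contained in a single branch is again an interval (of the same length), and the only way intervals get subdivided is by the finitely many discontinuity points in $D$. So if $\hat J$ contained even one nondegenerate interval, its forward orbit would consist of intervals of total length bounded below, and by invariance and minimality $\hat J$ would be exactly a finite union of intervals. Thus the whole problem reduces to showing that a minimal component cannot be a Cantor-type set of empty interior.

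The key step is therefore to rule out minimal components with empty interior. Here I would invoke the structure theory for i.t.m.'s from \cite{itm} and \cite{st}: for an aperiodic i.t.m., the non-wandering dynamics decomposes into finitely many minimal components, each of which is either (a) a finite union of intervals on which the first-return map is an i.e.m., or (b) a Cantor set arising as the attractor of a substitution-like renormalization. To exclude case (b) in our setting I would use Lemma \ref{lem:intutitionH}, which identifies the forward $H$-orbit of a point $(\tau,(p,c,s))$ with the representation in $\S_a$ of an extreme point of $\F_a$ in the direction $\tau$. A Cantor minimal component would force a whole uncountable family of directions $\tau$ to have extreme points with prescribed prefix-suffix patterns, and the u.r.p.\ together with Lemma \ref{lem:central} controls how $v_{a,(p,c,s)}$ separates near the finite set $\Psi_a$; this linear-separation estimate is incompatible with the self-similar contraction that a Cantor attractor would require. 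More concretely, I expect that the renormalization of $H$ on a putative Cantor component would be conjugate to a sub-i.t.m.\ whose defining intervals have endpoints in the finite set $\Psi$, and finiteness of $\Psi$ (Lemma \ref{lem:psi finito}) then bounds the combinatorial complexity, forcing the component to already be a union of intervals.

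The main obstacle I anticipate is the dichotomy step: passing from "finitely many minimal components" (Lemma \ref{lem:finitecomponents}) to "each has nonempty interior" is exactly where aperiodic i.t.m.'s genuinely differ from i.e.m.'s, since aperiodic i.t.m.'s really can have Cantor minimal sets in general. The argument must use something special about \emph{this} i.t.m.\, namely that it comes from the extreme-point dynamics of the fractals $\F_a$ under the u.r.p. The cleanest route is probably: suppose $\hat J$ has empty interior; pick $t\in J$ with $\mathcal{e}(t)=(\tau,(p,c,s))$, so by Lemma \ref{lem:intutitionH} the orbit of $(\tau,(p,c,s))$ encodes the unique representation of an extreme point of $\F_a$ in direction $\tau$; then use continuity of $v_a$ (Lemma \ref{lem:continuity of v_a}) and Lemma \ref{lem:central} to show that the partition intervals $J_{a,(p,c,s)}$ of Lemma \ref{lem:itm} induce, on $\hat J$, a \emph{finite} clopen partition refining into genuine intervals — contradicting empty interior. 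Once interiors are nonempty, finiteness of the number of components (Lemma \ref{lem:finitecomponents}) plus the finitely-many-discontinuities property of $H$ immediately gives that each $\hat J$, and hence each $J=\hat J\cap I$, is a finite union of intervals of positive length.
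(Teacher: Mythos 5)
Your overall structure is right: you correctly reduce the lemma to showing that a minimal component $\hat J$ of $\hat H$ has nonempty interior, and you correctly note that once one nondegenerate interval $\hat K \subseteq \hat J$ is found, minimality yields $n$ with $\bigcup_{m=0}^n \hat H^m(\hat K) = \hat J$, and since $\hat H$ is an i.t.m.\ this union is a finite union of intervals. That part matches the paper.

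The gap is in the key step — establishing nonempty interior. You propose to rule out a Cantor minimal component by appealing to Lemma~\ref{lem:intutitionH}, the u.r.p., Lemma~\ref{lem:central}, and a vague renormalization/self-similarity dichotomy; but none of this is actually carried out, and in fact none of it is needed. The special structural fact you should exploit is much more elementary: $H$ (and $\hat H$) is a skew product over the irrational rotation $\tau \mapsto \beta_0^{-1}\tau$ on $\SS^1$, with the finite fiber $\bA$. Pick $\hat t \in \hat J$ and look at the first coordinate $\tau$ of $\mathcal{e}(\hat t)$. The first coordinates of $\mathcal{e}(\hat H^m(\hat t))$ are $\beta_0^{-m}\tau$, which form a dense subset of $\SS^1$ (with convergence from both sides in the $\hat I$-topology); since $\hat J$ is compact and $\hat H$-invariant, the projections $\hat J_{(p,c,s)}$ of $\mathcal{e}(\hat J \cap \hat I_{(p,c,s)})$ to $\SS^1$ are closed and satisfy $\bigcup_{(p,c,s)\in\bA} \hat J_{(p,c,s)} = \SS^1$. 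As $\bA$ is finite, some $\hat J_{(p,c,s)}$ must contain an open arc $K$, and pulling $K$ back into $\hat J \cap \hat I_{(p,c,s)}$ gives the desired open interval $\hat K$. This is the step your proposal does not supply; the fractal/u.r.p.\ machinery (Lemmas~\ref{lem:central}, \ref{lem:intutitionH}, \ref{lem:psi finito}) is used elsewhere to \emph{define} $H$ and to prove the other theorems, but this lemma needs only compactness, the finiteness of $\bA$, and minimality of the base rotation.
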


\begin{proof}
Let $J$ be a minimal component of $H$. Then, by definition $J=\hat J \cap I$ for a minimal component $\hat J$ of $\hat H$. 

For each $(p,c,s) \in \bA$ define 
$\hat J_{(p,c,s)}$ as the projection on $\SS^1$ of 
$\mathcal{e}(\hat J \cap \hat I_{(p,c,s)})$, where $\hat I_{(p,c,s)} = I_{(p,c,s)} \cup \{t^-; t \in D \cap I_{(p,c,s)}  \}$. We will prove that $\bigcup_{(p,c,s) \in \bar{\A}}\hat J_{(p,c,s)}=\SS^1$.

Let $\hat t \in \hat J$ and let $(\tau, (p,c,s)) = \mathcal{e}(\hat t) \in X$.
For any integer $m \geq 0$ we have that the first coordinate of $\mathcal{e}(\hat{H}^m(\hat t))$  is $\beta_0^{-m} \tau$. Since the rotation by $\beta_0^{-1}$ is irrational, 
we get that $\SS^1=\overline{\{ \beta_0^{-m} \}_{m \geq 0}} $. Moreover, we can find subsequences converging to every point in $\SS^1$ from above and below. 
Since $\hat J$ is compact, we obtain that $\SS^1 = \bigcup_{(p,c,s) \in \bar{\A}} \hat J_{(p,c,s)}$. Notice that we have used the convergence in the topology of $\hat I$. 
A similar argument shows that each $\hat J_{(p,c,s)}$ is closed. 
Then, there exists $(p,c,s) \in \bar{\A}$ such that $\hat J_{(p,c,s)}$ contains an open interval 
$K \subseteq \SS^1$. 

Let $\hat K \subseteq \hat J$ be the set of the $\hat t \in \hat J \cap \hat I_{(p,c,s)}$ such that the first coordinate of $\mathcal{e}(\hat t)$ belongs to $K$. Since $K$ is an open interval, there exist $s, r \in I$ such that for every $s < t < r$ either $t$ or $t^-$ belongs to $\hat K$. Since $\hat J$ is closed, we deduce that both $t$ and $t^-$ belong to $\hat K$ for every $s < t < r$, so $\hat K$ is an open interval in $\hat J$.

By minimality, there exists $n \geq 1$ such that $\bigcup_{m = 0}^n \hat H^m(\hat K) = \hat J$. Since $I$ and $D^-$ are invariant for $\hat{H}$, we obtain that $J=\bigcup_{m = 0}^n H^m(\hat K \cap I)$. 
To conclude, we have that $\bigcup_{m = 0}^n \hat H^m(\hat K)$ and $\bigcup_{m = 0}^n H^m(\hat K \cap I)$ are finite unions of intervals of positive length, since the image of an interval by either $\hat H$ or $H$ is a finite union of intervals (recall that it is an i.t.m.)
\end{proof}

Given $(\tau,(p,c,s)) \in X$ there always exists a minimal component contained in the closure of its orbit by $H$. Indeed, let $t \in I$ such that $\mathcal{e}(t) = (\tau,(p,c,s))$. The closure of its orbit by $\hat H$ contains a minimal component 
$\hat J$. Then, $J=\hat J\cap I$ satisfies 
$J \subseteq \overline{\Orb_{\hat H}(t)}\cap I \subseteq \overline{\Orb_H(t)}$
(notice that the topology of $\hat I$ is stronger than the one of $I$).
We conclude by mapping back these objects to $X$. 	

\begin{lem}\label{lem:minimalarrival}
There exists $N\geq 0$ such that for any 
$(\tau,(p,c,s)) \in X$ we have that $H^{N}(\tau,(p,c,s))$ belongs to a minimal component contained in the closure of its orbit by $H$. In particular, there exists a unique minimal component contained in the closed orbit of any point in $X$. 
\end{lem}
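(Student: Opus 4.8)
The plan is to pass to the continuous model $\hat H\colon\hat I\to\hat I$ and first establish a ``finite type'' statement — some fixed power of $\hat H$ sends all of $\hat I$ into the union of the minimal components — from which the assertion for $H$ on $X$ follows by transporting through the identifications of Section~\ref{sec:the map H} together with a short separation argument for uniqueness. Let $\hat J_1,\dots,\hat J_k$ be the minimal components of $\hat H$, finite in number by Lemma~\ref{lem:finitecomponents}, and set $\hat J=\bigcup_{i=1}^k\hat J_i$. By Lemma~\ref{lem:finiteunion} each $\hat J_i$ is a finite union of intervals of positive length, so $\hat J$ is closed, $\Int\hat J\supseteq\Int\hat J_i\neq\varnothing$ for every $i$, and $\overline{\hat I\setminus\hat J}=\hat I\setminus\Int\hat J$. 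Since $\hat H(\hat J_i)=\hat J_i$ (Definition~\ref{def:minimalset}), we have $\hat H(\hat J)=\hat J$; in particular, any $\hat H$-orbit that meets $\hat J$ remains in $\hat J$ afterwards.

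The key step is to show that there exists $N\geq 0$ with $\hat H^{N}(\hat I)\subseteq\hat J$. Suppose not; then, by the previous remark, for every $n$ there is a point $x_n\in\hat I$ with $\hat H^{m}(x_n)\notin\hat J$ for all $0\leq m\leq n$. As $\hat I$ is a compact metric space, we may pass to a subsequence with $x_{n_k}\to x$. Fix $m\geq 0$; for all large $k$ one has $\hat H^{m}(x_{n_k})\in\hat I\setminus\hat J$, and since $\hat H^{m}$ is continuous and $\hat J$ is closed, $\hat H^{m}(x)\in\overline{\hat I\setminus\hat J}=\hat I\setminus\Int\hat J$. Thus the whole forward orbit of $x$ lies in the closed set $\hat I\setminus\Int\hat J$, so $\overline{\Orb_{\hat H}(x)}\subseteq\hat I\setminus\Int\hat J$. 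On the other hand, by the same standard compactness argument used for points of $X$ in the paragraph preceding this lemma (valid for any point of $\hat I$), $\overline{\Orb_{\hat H}(x)}$ contains some minimal component $\hat J_j$. Then $\hat J_j\cap\Int\hat J=\varnothing$, which contradicts $\varnothing\neq\Int\hat J_j\subseteq\hat J_j\cap\Int\hat J$. This proves the claim, and we fix such an $N$.

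To conclude, take $(\tau,(p,c,s))\in X$ and let $t\in I$ be the point with $\mathcal{e}(t)=(\tau,(p,c,s))$. Since $\hat H$ leaves $I$ invariant and restricts to $H$ there, $H^{N}(t)=\hat H^{N}(t)\in\hat J\cap I=\bigcup_i J_i$, hence $H^{N}(t)\in J_{i_0}$ for exactly one index $i_0$, as distinct minimal components are disjoint. By the discussion following Definition~\ref{def:minimalset}, $J_{i_0}\subseteq\overline{\Orb_{H}(y)}$ for every $y\in J_{i_0}$; applying this to $y=H^{N}(t)$ gives $J_{i_0}\subseteq\overline{\Orb_{H}(t)}$. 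Transporting through $\mathcal{e}$, the point $H^{N}(\tau,(p,c,s))$ lies in the minimal component $\mathcal{e}(J_{i_0})$ of $H$ on $X$, which is contained in $\overline{\Orb_{H}(\tau,(p,c,s))}$. For uniqueness, $H(J_{i_0})=J_{i_0}$ forces $H^{m}(t)\in J_{i_0}$ for all $m\geq N$, so $\overline{\Orb_{H}(t)}\subseteq\{t,H(t),\dots,H^{N-1}(t)\}\cup J_{i_0}$; any minimal component contained in this set is infinite (a finite union of intervals of positive length), hence meets $J_{i_0}$ and therefore, by disjointness of distinct minimal components, equals $J_{i_0}$. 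Transporting back, $\mathcal{e}(J_{i_0})$ is the unique minimal component contained in $\overline{\Orb_{H}(\tau,(p,c,s))}$.

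The only genuine difficulty is the ``finite type'' claim of the second paragraph: it is precisely there that the fact that the minimal components have nonempty interior is used in an essential way, a feature that stems from the irrational rotation factor of $H$ and that generally fails for interval translation maps. The remaining steps are routine bookkeeping with the $\hat I$, $I$ and $X$ identifications set up earlier.
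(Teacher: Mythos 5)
Your argument is correct, and it departs from the paper's proof in a meaningful way at the crucial step. The paper first fixes a point $t$, shows that its forward orbit enters $\Int\hat J$ for the (unique) minimal component $\hat J$ in its closed orbit, then covers the compact set $\overline{\Orb_{\hat H}(t)}$ by the sets $\hat H^{-m}(\Int\hat J)$ to extract a finite $N(t)$, and finally appeals to the finiteness of the family of minimal components to assert that $N$ can be taken uniform. You instead prove the uniform statement $\hat H^N(\hat I)\subseteq\hat J$ directly by contradiction: extract a subsequential limit $x$ of points whose orbits dodge $\hat J$ for arbitrarily long, use continuity and closedness of $\hat J$ to push the entire forward orbit of $x$ into $\hat I\setminus\Int\hat J$, and then contradict the fact that the minimal component inside $\overline{\Orb_{\hat H}(x)}$ has nonempty interior. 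This makes the passage to a uniform $N$ explicit rather than deferred, and it avoids the slightly awkward ``unique minimal component per point, then finitely many such components'' bookkeeping; in exchange, the paper's route more transparently records the exact set $\bigcup_{m\le N}\hat H^{-m}(\hat J)$ swept out by the finite basin. One minor imprecision in your last paragraph: $J_{i_0}=\hat J_{i_0}\cap I$ need not be closed in the usual topology of $I$ (the topology of $\hat I$ is finer), so strictly one gets $\overline{\Orb_H(t)}\subseteq\{t,\dots,H^{N-1}(t)\}\cup\overline{J_{i_0}}$, where $\overline{J_{i_0}}\setminus J_{i_0}$ is a finite set of interval endpoints. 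Since any minimal component is a finite union of intervals of positive length and therefore infinite, it must still meet $J_{i_0}$, so your uniqueness conclusion stands unchanged.
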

\begin{proof}
We prove the lemma using the map $H$ as seen on $I$. Consider the element $t \in I$ such that $\mathcal{e}(t) = (\tau,(p,c,s))$.
Let $J$ be a minimal component contained in $\overline{\Orb_H(t)}$. 
By definition $J=\hat J\cap I$, where $\hat J$ is a minimal component for $\hat H$ with 
$\hat J \subseteq \overline{\Orb_{\hat H}(t)}$ as discussed just before the lemma. 

By Lemma \ref{lem:finiteunion}, we have that $\hat J$ has nonempty interior. Since $\hat J$ is contained in $\overline{\Orb_{\hat H}(t)}$, there exists $n\geq 0$ such that $\hat H^n(t)$ attains the interior of $\hat J$. This implies in particular that $\overline{\Orb_{\hat H}(t)}$ contains a unique minimal component. Thus, $\overline{\Orb_{\hat H}(t)}=\bigcup_{m\geq 0} \hat H^{-m}(\hat J)$. But $\hat J$ has nonempty interior and $\hat H$ is continuous, so by compactness there exists $N\geq 1$ such that 
$\overline{\Orb_{\hat H}(t)}=\bigcup_{m=0}^N \hat H^{-m}(\hat J)$ and then $\hat H^N(t) \in \hat J$. Using a similar argument as the one developed in the proof of the previous lemma to pass from $\hat J$ to $J$, one deduces that $H^N(t) \in J$. 

To conclude that $N$ can be chosen uniformly, we observe from Lemma \ref{lem:finitecomponents} that $H$ and $\hat H$ have finitely many minimal components. 
\end{proof}

\subsection{Proof of Theorem~\ref{thm:H_minimal}}

Let $\Lambda_H$ be the limit set of $H$:
$\Lambda_H = \bigcap_{m \geq 1} H^m(X)$. As a consequence of the previous lemma and the fact that minimal components for $H$ are strongly invariant (see the comment after the definition of minimal components), there exists $N\geq0$ such that $H^m(X)=H^N(X)$ if $m \geq N$.
In the nomenclature of \cite{itm}, this property means that $H$ is of \emph{finite type}. 
By Lemma~\ref{lem:minimalarrival}, every point $(\tau,(p,c,s))$ attains the minimal component in the closure of its orbit in $N$ steps. Moreover, $H$ is surjective when restricted to a minimal component.
Therefore, $\Lambda_H=H^{N}(X)$ is equal to the disjoint union of the minimal components of $H$.
We collect all this observations in the following Corollary for future reference.

\begin{cor}\label{thm:H^N_minimal}
There exists $N \geq 0$ such that $\Lambda_H=H^{N}(X)$. Moreover, $\Lambda_H$ is equal to the disjoint union of the minimal components of $H$ and thus is a finite union of intervals of positive length. 
\end{cor}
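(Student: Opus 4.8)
The plan is to assemble the corollary from Lemmas~\ref{lem:finitecomponents}, \ref{lem:finiteunion} and \ref{lem:minimalarrival} together with the strong invariance of minimal components recorded after Definition~\ref{def:minimalset}. Write $\mathcal{J}$ for the union of the minimal components $J_1,\dots,J_k$ of $H$, which are finitely many by Lemma~\ref{lem:finitecomponents}, pairwise disjoint and strongly invariant (i.e.\ $H(J_i)=J_i$) by the remarks following Definition~\ref{def:minimalset}, and each a finite union of intervals of positive length by Lemma~\ref{lem:finiteunion}; hence so is $\mathcal{J}$.

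First I would prove the two inclusions $\mathcal{J}\subseteq\Lambda_H$ and $H^N(X)\subseteq\mathcal{J}$, where $N$ is the uniform constant provided by Lemma~\ref{lem:minimalarrival}. For the first, strong invariance gives $J_i=H^m(J_i)\subseteq H^m(X)$ for every $m\geq 1$, hence $\mathcal{J}\subseteq\bigcap_{m\geq 1}H^m(X)=\Lambda_H$. The second inclusion is exactly Lemma~\ref{lem:minimalarrival}, which says that $H^N$ sends every point of $X$ into a minimal component.

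Combining these with the trivial inclusion $\Lambda_H\subseteq H^N(X)$ yields the chain $H^N(X)\subseteq\mathcal{J}\subseteq\Lambda_H\subseteq H^N(X)$, so $H^N(X)=\mathcal{J}=\Lambda_H$. This already gives the ``moreover'' part: $\Lambda_H$ is the disjoint union of the minimal components of $H$ and, therefore, a finite union of intervals of positive length. For the stabilization, note that for $m\geq N$ one has $H^m(X)=H^{m-N}(H^N(X))=H^{m-N}(\Lambda_H)$, and since $\Lambda_H$ is a disjoint union of strongly invariant pieces, $H^{m-N}(\Lambda_H)=\mathcal{J}=\Lambda_H$. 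Thus $H^m(X)=\Lambda_H$ for all $m\geq N$, in particular $\Lambda_H=H^N(X)$, and $H$ is of \emph{finite type} in the sense of \cite{itm}.

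I do not anticipate a real obstacle: the substantive work---finiteness of the number of minimal components, their structure as finite unions of intervals, and the existence of a uniform arrival time $N$---has already been carried out in Lemmas~\ref{lem:finitecomponents}--\ref{lem:minimalarrival}. The only point deserving care is that ``strongly invariant'' must be used in the strong form $H(J_i)=J_i$, i.e.\ $H$ is surjective on each minimal component, rather than merely $H(J_i)\subseteq J_i$; this surjectivity is what forces $H^m(X)=\Lambda_H$ for all $m\geq N$ and is exactly the property noted just before the statement of the corollary.
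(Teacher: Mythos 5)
Your proof is correct and follows essentially the same route as the paper: it combines Lemma~\ref{lem:minimalarrival} with the strong invariance $H(J_i)=J_i$ of minimal components and the finiteness results of Lemmas~\ref{lem:finitecomponents}--\ref{lem:finiteunion}. You merely spell out the chain of inclusions $H^N(X)\subseteq\mathcal{J}\subseteq\Lambda_H\subseteq H^N(X)$ more explicitly than the paper does, which is a presentational rather than substantive difference.
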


\begin{proof}[Proof of Theorem~\ref{thm:H_minimal}]
It was already proved in Lemmas~\ref{lem:finitecomponents} and \ref{lem:finiteunion} that $H$ has finitely many minimal components each of which is a union of intervals. We need to prove that the restriction of $H$ to a minimal component is a minimal i.e.m.
At each minimal component $H$ is surjective, but since it is an i.t.m.\ 
it is also injective, so it is an i.e.m.
\end{proof}

\section{Construction of minimal points}\label{sec:proofs}
We continue under the same assumptions of the previous sections. In particular, we fix the eigenvector $\Gamma$ used to define fractals and related concepts in Section \ref{subsec: fractals associated with sigma and beta}.

The following result is implied by Lemma 7.4 in \cite{CGM}, since Definition~\ref{def:good} is weaker than the one in that article.
\begin{lem}\label{lem:almost-good-eigenvector}
Almost every $\tau \in \SS^1$ is a good direction.
\end{lem}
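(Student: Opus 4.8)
The plan is to deduce the statement from Lemma 7.4 of \cite{CGM}. That lemma asserts that the set of directions which are good \emph{in the sense of \cite{CGM}} has full Lebesgue measure in $\SS^1$. As explained just after Definition \ref{def:good}, the notion of good direction used in the present article is a weakening of the one in \cite{CGM}: by comparing the two definitions, every direction that is good in the sense of \cite{CGM} satisfies condition (i) of Definition \ref{def:good}. Hence the set of good directions in the sense of Definition \ref{def:good} contains a set of full Lebesgue measure, and is therefore itself of full measure. This is the entire argument, and it is the route I would present in the paper.

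For completeness, I would also note that the statement admits a short self-contained proof. Fix $B$ with $1 < B < |\beta|$. For each $\xi \in \Psi$ and each $n \geq 1$, the set $\{\tau \in \SS^1 \colon \llbracket \xi - \beta_0^n \tau \rrbracket < B^{-n}\}$ is a single arc, of length proportional to $B^{-n}$, because $\tau \mapsto \beta_0^n\tau$ is a rotation of $\SS^1$ and hence measure-preserving with arcs pulling back to arcs of the same length. Since $\sum_{n \geq 1} B^{-n} < \infty$, the Borel--Cantelli lemma shows that, for Lebesgue-almost every $\tau \in \SS^1$, one has $\llbracket \xi - \beta_0^n\tau \rrbracket \geq B^{-n}$ for all sufficiently large $n$. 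As $\Psi$ is finite by Lemma \ref{lem:psi finito}, a single full-measure set of $\tau$ works simultaneously for all $\xi \in \Psi$. Finally, choosing any $A$ with $B < A < |\beta|$ (possible since $B < |\beta|$), for such a $\tau$ and every $\xi \in \Psi$ we obtain
$$A^n \llbracket \xi - \beta_0^n \tau \rrbracket \geq (A/B)^n \xrightarrow[n \to \infty]{} \infty,$$
so $\tau$ is a good direction.

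I do not expect any serious obstacle here. The only points requiring a little care are: in the first route, checking that the two notions of ``good'' genuinely compare as claimed (this is really just reading off the definitions); in the second route, the elementary measure estimate on arcs of $\SS^1$ and, crucially, the fact that the exponent $A$ can be taken \emph{strictly} below $|\beta|$, which is exactly what the strict inequality $B < |\beta|$ provides and which is why a summable geometric series appears.
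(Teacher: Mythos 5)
Your first route is exactly the paper's proof: the lemma is deduced in one line from Lemma 7.4 of \cite{CGM} together with the observation that Definition \ref{def:good} is weaker than the corresponding definition there. Your supplementary Borel--Cantelli argument is also correct and would serve as a valid self-contained alternative, but it is not needed.
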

A direct consequence of the lemma is that 
almost every eigenvector $\gamma$ in the complex space generated by $\Gamma$ is good.

\subsection{Technical lemmas}\label{sec:technicallemmas}
Our first lemma is a slight modification of Lemma 5.7 in \cite{CGM}. We omit the proof since it is almost identical.

\begin{lem}\label{lem:converg-exponencial}
There exists a constant $C > 0$ such that 
for all $\tau\in\SS^1$, $a\in \A$, $(p,c,s)\in\bA_a$ and $n\geq 0$, 
$$0\leq v^{(n)}_{a,(p,c,s)}(\tau) - v_{a,(p,c,s)}(\tau) \leq C |\beta|^{-n}.$$
\end{lem}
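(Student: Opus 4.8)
The plan is to establish the two inequalities separately; both follow directly from the definitions of $\F_{a,(p,c,s)}$ and $\F^{(n)}_{a,(p,c,s)}$ together with the geometric decay of the terms $\beta^{-m}\Gamma(p_m)$, exactly as in Lemma 5.7 of \cite{CGM}.

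For the upper bound, fix $x=(p^x_m,c^x_m,s^x_m)_{m\geq 1}\in\S_{a,(p,c,s)}$ and bound the truncation error
\[
\bigl|\z_a(x)-\z^{(n)}_a(x)\bigr|=\Bigl|\sum_{m>n}\beta^{-m}\Gamma(p^x_m)\Bigr|\leq\sum_{m>n}|\beta|^{-m}\,|\Gamma(p^x_m)|.
\]
Each $p^x_m$ is a prefix of $\sigma(c^x_{m-1})$, hence has length at most $L=\max_{b\in\A}|\sigma(b)|$, so $|\Gamma(p^x_m)|\leq L\max_{b\in\A}|\Gamma_b|$; summing the geometric series gives $|\z_a(x)-\z^{(n)}_a(x)|\leq C|\beta|^{-n}$ with $C=L\max_{b}|\Gamma_b|/(|\beta|-1)$, a constant independent of $\tau$, $a$, $(p,c,s)$ and $n$ (this is where finiteness of $\A$ and of the images of $\sigma$ is used). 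Consequently, for every $\tau\in\SS^1$ and every $z=\z_a(x)\in\F_{a,(p,c,s)}$ the point $\z^{(n)}_a(x)\in\F^{(n)}_{a,(p,c,s)}$ satisfies $\Re(\tau\,\z^{(n)}_a(x))\leq\Re(\tau z)+C|\beta|^{-n}$; minimizing over $z\in\F_{a,(p,c,s)}$ yields $v^{(n)}_{a,(p,c,s)}(\tau)\leq v_{a,(p,c,s)}(\tau)+C|\beta|^{-n}$.

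For the lower bound, the key point is the inclusion $\F^{(n)}_{a,(p,c,s)}\subseteq\F_{a,(p,c,s)}$ for $n\geq 1$. Given $x\in\S_{a,(p,c,s)}$, construct $\tilde x\in\S_{a,(p,c,s)}$ by keeping the first $n$ labels of $x$ and then, starting from the letter $c^x_n$, appending the canonical tail whose every prefix is empty, i.e.\ at each stage choosing the label $(\varepsilon,b,s)$ where $b$ is the first letter of $\sigma$ of the current letter; this is a legitimate element of $\S$ because $\sigma(b)\neq\varepsilon$ for all $b$, and since $\Gamma(\varepsilon)=0$ we get $\z_a(\tilde x)=\z^{(n)}_a(x)$. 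Hence $v_{a,(p,c,s)}(\tau)=\min_{z\in\F_{a,(p,c,s)}}\Re(\tau z)\leq\min_{z\in\F^{(n)}_{a,(p,c,s)}}\Re(\tau z)=v^{(n)}_{a,(p,c,s)}(\tau)$, which is $0\leq v^{(n)}_{a,(p,c,s)}(\tau)-v_{a,(p,c,s)}(\tau)$ (the case $n=0$ being trivial under the convention $v^{(0)}_{a,(p,c,s)}=v_{a,(p,c,s)}$). The argument is routine and no real obstacle is expected; the only care needed is to keep $C$ uniform in all the data and to check that the empty-prefix tail is a genuine element of $\S_{a,(p,c,s)}$ so that the fractal inclusion holds.
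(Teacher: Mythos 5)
Your argument is correct and is precisely the expected one: the paper itself omits the proof (citing Lemma 5.7 of \cite{CGM}), but truncating the series defining $\z_a$ at step $n$, bounding the tail geometrically using the uniform bound on $|\Gamma(p^x_m)|$, and establishing the other inequality via the inclusion $\F^{(n)}_{a,(p,c,s)}\subseteq\F_{a,(p,c,s)}$ obtained by continuing with the all-$\varepsilon$-prefix tail is the natural way to do this, and your bookkeeping of the constant $C$ is right. One small caveat on the $n=0$ edge case: your ``convention'' $v^{(0)}_{a,(p,c,s)}=v_{a,(p,c,s)}$ is not the empty-sum convention, under which $\z^{(0)}_a\equiv 0$ and $v^{(0)}_{a,(p,c,s)}(\tau)=0$, so the lower bound would require $v_{a,(p,c,s)}(\tau)\leq 0$, which need not hold when $p\neq\varepsilon$; since the paper only defines $\z^{(n)}_a$ for $n\geq 1$ and every later use of this lemma takes $n\geq 1$, the ``$n\geq 0$'' in the statement is best read as ``$n\geq 1$'' rather than patched with an ad hoc convention.
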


We will also need a stronger result. 

\begin{lem}\label{converg-exponencial-2}
There exists a constant $C > 0$ such that for all $\tau \in \SS^1$, $a\in \A$, $(q,a,r) \in \bA$ and $n\geq 1$, we have that if $H^m(\tau,(q,a,r))= (\beta_0^{-m}\tau, (p_{m}, c_{m},s_{m}))$ for 
$m \geq 1$ and $x=(p_{m},c_{m},s_{m})_{m\geq 1} \in \S_{a}$, then
$$0 \leq \Re(\tau \z_a^{(n)}(x))-v_a(\tau) \leq C|\beta|^{-n}.$$
Moreover, if $x\in \S_{a,(p,c,s)}$ then
$$0 \leq \Re(\tau \z_a^{(n)}(x))-v_{a,(p,c,s)}(\tau) \leq C|\beta|^{-n}.$$

\end{lem}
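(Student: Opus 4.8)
The plan is to deduce Lemma~\ref{converg-exponencial-2} from Lemma~\ref{lem:intutitionH} and Lemma~\ref{lem:converg-exponencial}. First I would observe that the sequence $x=(p_m,c_m,s_m)_{m\geq 1}$ obtained from the $H$-orbit of $(\tau,(q,a,r))$ is exactly the one described in Lemma~\ref{lem:intutitionH}: by Remark~\ref{rem:exist H}, the fact that $H^m(\tau,(q,a,r))=(\beta_0^{-m}\tau,(p_m,c_m,s_m))$ means at each step $v_{c_{m-1}}(\beta_0^{-(m-1)}\tau)=v_{c_{m-1},(p_m,c_m,s_m)}(\beta_0^{-(m-1)}\tau)$, so $H$ itself is a witness in $\H$ and Lemma~\ref{lem:intutitionH} (applied with $\tilde H=H$, or with the appropriate element of $\H$ if $\tau$ passes through ambiguity points) tells us that $x\in\S_a$ is the representation of an extreme point in $E_a(\tau)$, i.e.\ $\Re(\tau\z_a(x))=v_a(\tau)$. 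If moreover $H(\tau,(q,a,r))=(\beta_0^{-1}\tau,(p,c,s))$, then $\tau\in J_{a,(p,c,s)}$, so $v_a(\tau)=v_{a,(p,c,s)}(\tau)$ and $x\in\S_{a,(p,c,s)}$, giving the refined statement.

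Next I would write $\z_a(x)-\z_a^{(n)}(x)=\sum_{m>n}\beta^{-m}\Gamma(p_m^x)$ and telescope: using $\z_a(S^n x)=\sum_{m\geq 1}\beta^{-m}\Gamma(p_{n+m}^x)$ and the homogeneity $\Gamma(\sigma^k(u))=\beta^k\Gamma(u)$ from \eqref{eq:gamma(sigma^n)}, one gets $\z_a(x)=\z_a^{(n)}(x)+\beta^{-n}\z_{c_n^x}(S^n x)$. By the continuation property (Lemma~\ref{lem:cont-prop}), or equivalently since the $H$-orbit of $(\tau,(q,a,r))$ shifted by $n$ is the $H$-orbit of $(\beta_0^{-n}\tau,(p_n,c_n,s_n))$, the sequence $S^n x$ is the representation of an extreme point of $\F_{c_n^x}$ in the direction $\beta_0^{-n}\tau$, so $\Re\bigl(\beta_0^{-n}\tau\,\z_{c_n^x}(S^n x)\bigr)=v_{c_n^x}(\beta_0^{-n}\tau)$. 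Taking real parts of $\tau\z_a(x)$ and multiplying the tail by $|\beta|^{-n}$, I obtain
$$\Re(\tau\z_a(x))=\Re(\tau\z_a^{(n)}(x))+|\beta|^{-n}v_{c_n^x}(\beta_0^{-n}\tau).$$
Since the left side equals $v_a(\tau)$, this gives $\Re(\tau\z_a^{(n)}(x))-v_a(\tau)=-|\beta|^{-n}v_{c_n^x}(\beta_0^{-n}\tau)$.

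It remains to bound $|v_{c_n^x}(\beta_0^{-n}\tau)|$ uniformly, which shows the difference is $O(|\beta|^{-n})$, and to check the sign. For the sign, note $\Re(\tau\z_a^{(n)}(x))\geq v_a^{(n)}(\tau)\geq v_a(\tau)$ since $\z_a^{(n)}(x)\in\F_a^{(n)}$ and by Lemma~\ref{lem:converg-exponencial} (the inequality $v_a^{(n)}\geq v_a$ follows because $v_a^{(n)}(\tau)-v_a(\tau)=\min_{(p,c,s)}(v_{a,(p,c,s)}^{(n)}(\tau)-v_a(\tau))\geq 0$ using that $v_{a,(p,c,s)}^{(n)}\geq v_{a,(p,c,s)}\geq v_a$), so the lower bound $0\leq\Re(\tau\z_a^{(n)}(x))-v_a(\tau)$ is automatic. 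For the upper bound, $|v_{c_n^x}(\beta_0^{-n}\tau)|\leq\sup_{b\in\A,\vartheta\in\SS^1}|v_b(\vartheta)|<\infty$ because $v_b$ is continuous on the compact $\SS^1$ (Lemma~\ref{lem:continuity of v_a}) and $\A$ is finite; call this bound $C$. This yields $\Re(\tau\z_a^{(n)}(x))-v_a(\tau)\leq C|\beta|^{-n}$. The refined statement follows the same way replacing $v_a$ by $v_{a,(p,c,s)}$: when $x\in\S_{a,(p,c,s)}$ we have $v_{a,(p,c,s)}(\tau)=v_a(\tau)$ as noted above, so the two estimates coincide. The only mild subtlety — and the place I would be most careful — is handling the finitely many directions where $H$ is ambiguous, i.e.\ replacing $H$ by the suitable $\tilde H\in\H$ so that Lemma~\ref{lem:intutitionH} and Lemma~\ref{lem:cont-prop} still apply along the whole orbit; but since $\H$ is finite and each $\tilde H\in\H$ also satisfies Remark~\ref{rem:exist H} and the continuation property, this causes no real difficulty.
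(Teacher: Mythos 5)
Your proposal is correct and follows essentially the same route as the paper: invoke Lemma~\ref{lem:intutitionH} to get $\Re(\tau\z_a(x))=v_a(\tau)$, split the series at index $n$, and bound the tail by a uniform constant times $|\beta|^{-n}$. Your identification of the tail with $|\beta|^{-n}v_{c_n^x}(\beta_0^{-n}\tau)$ via the continuation property and your separate verification of the sign are just slightly more explicit versions of what the paper leaves implicit.
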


\begin{proof}
By Lemma \ref{lem:intutitionH}, $v_a(\tau) = \Re( \tau \z_a(x))$. Then, 
\begin{align*}
v_a(\tau) &= \Re\left(\tau\sum_{m\geq 1} \beta^{-m}\Gamma(p_m)\right) \\ 
&= \Re\left(\tau\sum_{m=1}^n \beta^{-m} \Gamma(p_m)\right) + |\beta|^{-n}\Re\left( \beta_0^{-n}\tau \sum_{m\geq n+1} \beta^{n-m}\Gamma(p_{m}) \right) \\
&= \Re(\tau \z_a^{(n)}(x)) + |\beta|^{-n}\Re\left( \beta_0^{-n}\tau \sum_{m\geq n+1} \beta^{n-m}\Gamma(p_{m}) \right).
\end{align*}
We conclude using that the series 
$\sum_{m\geq n+1} \beta^{n-m}\Gamma(p_{m})$ is uniformly bounded with respect to
$a\in\A$, $(q,a,r)\in\bA$ and $n\geq 1$. The second statement of the lemma is analogous. 
\end{proof}

The next lemma is the crucial step in the proofs of our main results. 
It will allow to characterize 
the prefix-suffix decompositions of minimal sequences. 

\begin{lem}\label{lem:technical}
Let $\tau \in \SS^1$ be a good direction and $a \in \A$. Assume $(m_k)_{k\geq 1}$ is an increasing sequence of positive integers such that 
$v_{a}(\beta_0^{m_k}\tau)=v_{a,(p,c,s)}(\beta_0^{m_k}\tau)$ for all $k \geq 1$ with $(p,c,s) \in \bA_a$. There exists $k_0 \geq 1$ such that if, for some $k \geq k_0$,
$x\in\S_a$ satisfies
$v_a^{(m_k)}(\beta_0^{m_k}\tau)= \Re(\beta_0^{m_k}\tau\z_a^{(m_k)}(x))$, then $x \in \S_{a, (p,c,s)}$. 
\end{lem}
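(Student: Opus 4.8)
The plan is to compare the finite-truncation quantity $v_a^{(m_k)}(\beta_0^{m_k}\tau)$ with the full infimum $v_a(\beta_0^{m_k}\tau)$ and show that any near-minimizer $x$ must in fact lie in the piece $\S_{a,(p,c,s)}$, using the hypothesis that $(p,c,s)$ is the \emph{unique} label achieving the minimum at $\beta_0^{m_k}\tau$ (which it is, by the u.r.p., since $\beta_0^{m_k}\tau \notin \Psi_a$ for large $k$ — the good-direction hypothesis keeps $\beta_0^n\tau$ away from the finite set $\Psi_a$). The obstruction to overcome is that a truncated near-minimizer $x$ coming from a ``wrong'' label $(\bar p,\bar c,\bar s) \neq (p,c,s)$ could, a priori, still achieve $v_a^{(m_k)}(\beta_0^{m_k}\tau)$ because the truncation error $C|\beta|^{-m_k}$ might mask the gap $v_{a,(\bar p,\bar c,\bar s)}(\beta_0^{m_k}\tau) - v_a(\beta_0^{m_k}\tau)$. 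So the heart of the argument is a quantitative lower bound on this gap that decays slower than $|\beta|^{-m_k}$.

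\textbf{Key steps.} First, I would fix $k$ large enough that $\beta_0^{m_k}\tau \notin \Psi$, so that $(p,c,s)$ is the unique label in $\bA_a$ with $v_a(\beta_0^{m_k}\tau) = v_{a,(p,c,s)}(\beta_0^{m_k}\tau)$; all other labels $(\bar p,\bar c,\bar s)$ satisfy $v_{a,(\bar p,\bar c,\bar s)}(\beta_0^{m_k}\tau) > v_a(\beta_0^{m_k}\tau)$ strictly. Second, I would produce the quantitative gap: for each $\xi \in \Psi_a$ at which $v_a = v_{a,(p,c,s)} = v_{a,(\bar p,\bar c,\bar s)}$, Lemma~\ref{lem:central} gives constants $D_1 \le D_2$ controlling $|v_{a,(p,c,s)}(\eta) - v_{a,(\bar p,\bar c,\bar s)}(\eta)|$ by $\llbracket \xi - \eta \rrbracket$ near $\xi$; combined with the finiteness of $\Psi_a$ (Lemma~\ref{lem:psi finito}) and compactness of the complement of small neighborhoods of $\Psi_a$, this yields a constant $D > 0$ such that for every $\eta \in \SS^1$ and every label $(\bar p,\bar c,\bar s) \neq (p,c,s)$ that could tie at some point of $\Psi_a$, one has $v_{a,(\bar p,\bar c,\bar s)}(\eta) - v_a(\eta) \ge D \cdot \operatorname{dist}(\eta, \Psi_a^{(p,c,s;\bar p,\bar c,\bar s)})$, where the set on the right is the (finite) subset of $\Psi_a$ where these two labels tie. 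Applying this at $\eta = \beta_0^{m_k}\tau$ and using that $\tau$ is good — so $A^{m_k}\llbracket \xi - \beta_0^{m_k}\tau\rrbracket \to \infty$ for every $\xi \in \Psi$ with some $1 < A < |\beta|$ — gives
\[
v_{a,(\bar p,\bar c,\bar s)}(\beta_0^{m_k}\tau) - v_a(\beta_0^{m_k}\tau) \ \ge\ D\, \llbracket \xi_k - \beta_0^{m_k}\tau\rrbracket \ \gg\ A^{-m_k} \ \gg\ |\beta|^{-m_k}
\]
for $k$ large, where $\xi_k$ is the nearest tie-point. Third, suppose $x \in \S_a$ achieves $v_a^{(m_k)}(\beta_0^{m_k}\tau) = \Re(\beta_0^{m_k}\tau\, \z_a^{(m_k)}(x))$ but $x \in \S_{a,(\bar p,\bar c,\bar s)}$ with $(\bar p,\bar c,\bar s) \neq (p,c,s)$. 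By Lemma~\ref{lem:converg-exponencial}, $\Re(\beta_0^{m_k}\tau\,\z_a^{(m_k)}(x)) \ge v_{a,(\bar p,\bar c,\bar s)}^{(m_k)}(\beta_0^{m_k}\tau) \ge v_{a,(\bar p,\bar c,\bar s)}(\beta_0^{m_k}\tau)$, while $v_a^{(m_k)}(\beta_0^{m_k}\tau) \le v_a(\beta_0^{m_k}\tau) + C|\beta|^{-m_k}$ (again Lemma~\ref{lem:converg-exponencial}, taking the minimum over all labels, or Lemma~\ref{converg-exponencial-2}). Chaining these inequalities forces $v_{a,(\bar p,\bar c,\bar s)}(\beta_0^{m_k}\tau) - v_a(\beta_0^{m_k}\tau) \le C|\beta|^{-m_k}$, contradicting the gap estimate for $k$ large. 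Hence $x \in \S_{a,(p,c,s)}$, and $k_0$ is chosen to make all the ``large $k$'' requirements simultaneously valid (finitely many: one per pair of labels in $\bA_a$, which is finite).

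\textbf{Main obstacle.} The delicate point is the second step: promoting the pointwise, asymptotic two-sided estimate of Lemma~\ref{lem:central} (which is phrased as a $\liminf$/$\limsup$ along sequences approaching a fixed tie-point) into a uniform lower bound $v_{a,(\bar p,\bar c,\bar s)} - v_a \ge D\,\operatorname{dist}(\cdot, \Psi_a)$ valid on all of $\SS^1$. This requires a covering argument: near each of the finitely many tie-points use Lemma~\ref{lem:central} to get a local linear lower bound on a small arc, and away from all tie-points use that $v_{a,(\bar p,\bar c,\bar s)} - v_a$ is continuous and strictly positive on the compact complement, hence bounded below by a positive constant. One must be slightly careful that the local constant $D_1$ from Lemma~\ref{lem:central} is a genuine \emph{uniform} lower bound on a whole neighborhood (not just a $\liminf$), which follows by a standard contradiction/compactness argument extracting a convergent subsequence of putative counterexamples. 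Once this uniform gap is in hand, everything else is a routine comparison of exponential rates ($A^{-m_k}$ versus $|\beta|^{-m_k}$) exploiting the definition of a good direction.
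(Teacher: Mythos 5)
Your proposal is correct in substance and rests on the same mechanism as the paper's proof: a quantitative lower bound on $v_{a,(\bar p,\bar c,\bar s)}-v_{a}$ coming from Lemma~\ref{lem:central}, beaten against the $C|\beta|^{-m_k}$ truncation error via the good-direction rate. The organization differs. The paper argues by contradiction along a single convergent subsequence $\beta_0^{m_k}\tau\to\xi$: it shows $\xi\in\Psi_a$ and applies Lemma~\ref{lem:central} once along that sequence, and it obtains the upper bound on the gap by introducing the auxiliary $H$-orbit points $y_k\in\S_{a,(p,c,s)}$ together with Lemma~\ref{converg-exponencial-2}. You instead first upgrade Lemma~\ref{lem:central} to a \emph{uniform} estimate $v_{a,(\bar p,\bar c,\bar s)}-v_a\geq D\operatorname{dist}(\cdot,\Psi_a)$ by a covering/compactness argument, and you avoid the auxiliary sequence entirely by using $v_a^{(m_k)}\leq v_a+C|\beta|^{-m_k}$ (minimum over labels in Lemma~\ref{lem:converg-exponencial}), which is a legitimate simplification. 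Your route is more modular but pays for it with the extra covering step, which you correctly identify as the delicate point and which does go through by the contradiction/compactness argument you sketch.

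One imprecision to fix: the uniform bound cannot hold ``for every $\eta\in\SS^1$'' as you state it, because $v_{a,(\bar p,\bar c,\bar s)}-v_a$ vanishes on the whole set $J_{a,(\bar p,\bar c,\bar s)}$ (a finite union of intervals, generally not contained in $\Psi_a$), not merely at the finitely many tie-points. The estimate is true only on $J_{a,(p,c,s)}=\{\eta:\, v_a(\eta)=v_{a,(p,c,s)}(\eta)\}$, where the zero set of $v_{a,(\bar p,\bar c,\bar s)}-v_a$ is indeed contained in $\Psi_a$ and hence finite. Since the lemma's hypothesis places every $\beta_0^{m_k}\tau$ in $J_{a,(p,c,s)}$, this restriction costs nothing in the application, but the statement of your second step should be corrected accordingly.
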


\begin{proof}
We proceed by contradiction. Without loss of generality, suppose that 
there exists $(\bar p,\bar c,\bar s)\neq (p,c,s)$ in $\bA_a$ and a sequence $x_{k} \in \S_{a,(\bar p,\bar c,\bar s)}$ such that $v_a^{(m_k)}(\beta_0^{m_k}\tau)= \Re(\beta_0^{m_k}\tau\z_a^{(m_k)}(x_k))$ for all $k \geq 1$.

Consider also the sequence $(y_{k})_{k\geq 1}$ given by 
$$y_{k}=( \pi_{\bA} ( H^m(\beta_0^{m_k}\tau,(q,a,r)) ))_{m\geq 1} \text{ for a fixed } (q,a,r)\in \bA.$$

By Lemma \ref{lem:psi finito} the set $\Psi_a$ is finite. Then, since $\beta_0$ is not a root of unity, after extracting a subsequence 
we can assume that $\beta_0^{m_k} \tau\notin\Psi_a$. Then, by hypothesis and definition of $H$, it follows that each $y_k \in \S_{a,(p,c,s)}$ and
$$v_{a,(\bar p,\bar c,\bar s)}(\beta_0^{m_k}\tau) > v_{a,(p,c,s)}(\beta_0^{m_k}\tau) \text{ for all } k \geq 1.$$

Without loss of generality we assume that $\beta_0^{m_k}\tau \to \xi \in \SS^1$. 
By continuity of $v_a$ and $v_{a,(p,c,s)}$, $v_a(\xi)=v_{a,(p,c,s)}(\xi)$. On the other hand, 
by Lemma \ref{lem:converg-exponencial} used in the sequence $(x_{k})_{k\geq 1}$
we have that 
$v_a(\xi)=v_{a,(\bar p,\bar c,\bar s)}(\xi)$ and thus $\xi\in \Psi_a$. 

Therefore, the hypotheses of Lemma \ref{lem:central} hold and there exists 
$k_0\geq1$ such that for every $k \geq k_0$:
$$v_{a,(\bar p,\bar c,\bar s)}(\beta_0^{m_k}\tau) - v_{a,(p,c,s)}(\beta_0^{m_k}\tau) \geq \frac{1}{2} D \llbracket \xi - \beta_0^{m_k}\tau \rrbracket,$$
where $ D $ is strictly positive.

Since, by hypothesis, $\Re(\beta_0^{m_k}\tau \z_a^{(m_k)}(x_{k})) = v_{a,(\bar p,\bar c,\bar s)}^{(m_k)}(\beta_0^{m_k}\tau) \geq v_{a,(\bar p,\bar c,\bar s)}(\beta_0^{m_k}\tau)$, we get:
\begin{equation}\label{eq:lemtechnicalx}\Re(\beta_0^{m_k}\tau \z_a^{(m_k)}(x_{k})) - v_{a,(p,c,s)}(\beta_0^{m_k}\tau) \geq \frac{1}{2} D \llbracket \xi - \beta_0^{m_k}\tau \rrbracket.\end{equation}

On the other hand, by Lemma \ref{converg-exponencial-2}, we obtain that:
\begin{equation}\label{eq:lemtechnicaly}-\Re(\beta_0^{m_k}\tau \z_a^{(m_k)}(y_{k}))+ v_{a,(p,c,s)}(\beta_0^{m_k}\tau)\geq - C|\beta|^{-m_k}.\end{equation}

Taking \eqref{eq:lemtechnicalx}${}+{}$\eqref{eq:lemtechnicaly} yields:
\begin{align*}
& \Re(\beta_0^{m_k}\tau \z_a^{(m_k)}(x_{k})) - \Re(\beta_0^{m_k}\tau \z_a^{(m_k)}(y_{k})) \\
&\geq \frac{1}{2} D \llbracket \xi - \beta_0^{m_k}\tau \rrbracket - C|\beta|^{-m_k} \\
& \geq |\beta|^{-m_k} \left(\frac{1}{2} D |\beta|^{m_k} \llbracket \xi - \beta_0^{m_k}\tau \rrbracket - C \right).
\end{align*}

Finally, since $\tau$ is a good direction, then $|\beta|^{m_k} \llbracket \xi - \beta_0^{m_k}\tau\rrbracket \to \infty$ as $k\to \infty$.
Therefore, if $k$ is sufficiently large,
$\Re(\beta_0^{m_k}\tau \z_a^{(m_k)}(x_{k})) - \Re(\beta_0^{m_k}\tau \z_a^{(m_k)}(y_{k})) >0$, which contradicts the hypothesis that $v_a^{(m_k)}(\beta_0^{m_k}\tau) = \Re(\beta_0^{m_k}\tau\z_a^{(m_k)}(x_{k}))$ for all $k \geq 1$.
\end{proof}

\begin{defn}\label{def:localminimal}
Let $a \in \A$ and an integer $n \geq 0$. By changing the indices of the letters we can decompose 
the word $\sigma^n(a)$ as a \emph{pointed word} $\sigma^n(a)=w=w_{-N}\ldots w_{-1} \cdot w_0 \ldots w_{N'}$, where $N, N' \geq 0$. We use the notation $w=S^N(\sigma^n(a))$ to refer to this kind of decomposition.

We say that $w=S^{N}(\sigma^n(a))$ is \emph{minimal} for $\sigma^n(a)$ and the vector $\gamma$ if 
$\Re(\gamma_m(w)) \geq 0$ for all $-N \leq m \leq N'$. 
\end{defn}
Observe that this is equivalent to $\Re(\gamma(w_{-N} \ldots w_{-1})) \leq \Re(\gamma(w_{-N} \ldots w_{m}))$ for every $-N \leq m \leq N'$ (see Lemma 4.3 in \cite{CGM}), so $w_{-N} \ldots w_{-1}$ is a proper prefix of $\sigma^n(a)$ satisfying $\Re(\gamma(w_{-N} \ldots w_{-1})) \leq \Re(\gamma(w'))$ for any proper prefix $w'$ of $\sigma^n(a)$.

The next lemma provides a finite prefix-suffix decomposition for $w=S^N(\sigma^n(a))$.
\begin{lem}
Let $a \in \A$, $n \geq 1$ and $w = S^N(\sigma^n(a))$ with $N \leq |\sigma^n(a)| - 1$. That is, $w = w_{-N} \ldots w_{-1} \cdot  w_0 \ldots w_{|\sigma^n(a)| - 1 - N}$. Then, there exists a finite prefix-suffix decomposition $(p_m, c_m, s_m)_{0 \leq m \leq n - 1}$ such that $\sigma(c_{m+1}) = p_m c_m s_m$ for every $0 \leq m \leq n -1$ and $p_{n-1}c_{n-1}s_{n-1} = \sigma(a)$ satisfying $w = \sigma^{n-1}(p_{n-1}) \ldots p_0 \cdot c_0 s_0 \ldots \sigma^{n-1}(s_{n-1})$. In other words, finite words that are shifts of iterates of letters have an analogue of a prefix-suffix decomposition.
\end{lem}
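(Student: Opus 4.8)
The statement asks for a "prefix-suffix decomposition" of a pointed word $w = S^N(\sigma^n(a))$ analogous to the classical one for two-sided sequences. The natural approach is induction on $n$, mimicking the usual recognizability argument but keeping track of finitely many coordinates instead of an infinite sequence. The base case $n = 1$ is immediate: write $w = S^N(\sigma(a))$, so $\sigma(a) = p_0 c_0 s_0$ where $p_0 = w_{-N} \ldots w_{-1}$ (the prefix of length $N$), $c_0 = w_0$ (the pointed letter), and $s_0 = w_1 \ldots w_{|\sigma(a)|-1-N}$, and set this triple as the decomposition, with the convention $(p_{n-1}, c_{n-1}, s_{n-1}) = (p_0, c_0, s_0)$ and $p_0 c_0 s_0 = \sigma(a)$. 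This uses that $N \leq |\sigma(a)| - 1$, guaranteeing $c_0$ exists.

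For the inductive step, suppose the claim holds for $n$; I want to prove it for $n+1$. Given $w = S^N(\sigma^{n+1}(a)) = S^N(\sigma(\sigma^n(a)))$, the idea is to locate the pointed position inside the block structure of $\sigma$ applied to $\sigma^n(a)$. Write $\sigma^n(a) = u_0 u_1 \ldots u_{L-1}$ with $L = |\sigma^n(a)|$; then $\sigma^{n+1}(a) = \sigma(u_0)\sigma(u_1)\cdots\sigma(u_{L-1})$. The pointed index $N$ falls inside exactly one block $\sigma(u_j)$; say $\sigma(u_j) = p'_0 c'_0 s'_0$ where the dot lands between $p'_0$ and $c'_0$, i.e., $p'_0 = $ the part of $\sigma(u_j)$ before the dot, $c'_0 = u_j$-th image letter at the pointed spot, $s'_0 = $ the part after. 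So $\sigma(u_j) = p'_0 c'_0 s'_0$ with $c'_0$ a single letter, and $\sigma^{n+1}(a) = [\sigma(u_0)\cdots\sigma(u_{j-1}) p'_0] \cdot [c'_0 s'_0 \sigma(u_{j+1})\cdots\sigma(u_{L-1})]$. Set $(p_0, c_0, s_0) = (p'_0, c'_0, s'_0)$, which satisfies $\sigma(u_j) = p_0 c_0 s_0$, i.e., $\sigma(c_1) = p_0 c_0 s_0$ with $c_1 := u_j$. Now I apply the inductive hypothesis to $S^j(\sigma^n(a))$ — which is a legitimate pointed word since $j \leq L - 1 = |\sigma^n(a)| - 1$ — to obtain a decomposition $(p_m, c_m, s_m)_{0 \leq m \leq n-1}$ with $p_{n-1}c_{n-1}s_{n-1} = \sigma(a)$ and $S^j(\sigma^n(a)) = \sigma^{n-1}(p_{n-1})\cdots p_0 \cdot c_0 s_0 \cdots \sigma^{n-1}(s_{n-1})$, where now $c_0$ of that inner decomposition equals $u_j$. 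Relabel: the inner decomposition's indices shift up by one to become $(p_{m+1}, c_{m+1}, s_{m+1})_{0 \leq m \leq n-1}$, giving a decomposition of length $n+1$ with $p_n c_n s_n = \sigma(a)$ and $\sigma(c_{m+1}) = p_m c_m s_m$ for all $0 \leq m \leq n$.

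The final bookkeeping step is to check that applying $\sigma$ to the inner identity and reinserting $p'_0, c'_0, s'_0$ recovers $w = \sigma^n(p_n)\cdots p_0 \cdot c_0 s_0 \cdots \sigma^n(s_n)$. Concretely, from $S^j(\sigma^n(a)) = \sigma^{n-1}(p_n)\cdots p_1 \cdot c_1 s_1 \cdots \sigma^{n-1}(s_n)$ (after relabeling) one applies $\sigma$ to each side: $\sigma(S^j(\sigma^n(a)))$ has pointed word $\sigma^n(p_n)\cdots \sigma(p_1)\cdot \sigma(c_1)\sigma(s_1)\cdots\sigma^n(s_n)$, but the dot must then be moved inside the block $\sigma(c_1) = p_0 c_0 s_0$, producing $\sigma^n(p_n)\cdots\sigma(p_1) p_0 \cdot c_0 s_0 \sigma(s_1)\cdots\sigma^n(s_n)$, which is exactly the desired form; and one checks that the position of this dot agrees with $N$ by counting letters. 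I expect the main obstacle to be purely notational: carefully tracking how the pointed index $N$ transforms under $\sigma$ and confirming the index within $\sigma(u_j)$ is consistent with the outer shift. No deep idea is needed beyond recognizability, which is already in hand via the finiteness of blocks and injectivity of $\sigma$; the content is entirely that the infinite prefix-suffix machinery truncates correctly for finite iterates of letters.
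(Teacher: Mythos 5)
Your proof is correct and follows essentially the same route as the paper's: induction on $n$, locating the block of the substitution image that contains the pointed position, taking $(p_0,c_0,s_0)$ from the splitting of that block, and applying the inductive hypothesis to the correspondingly shifted lower-level word (the paper phrases the block location via the minimality of the shift $k$ with $w=S^k(\sigma(w'))$, which is the same as your explicit block index $j$).
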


\begin{proof}
	If $n = 1$, then $w = p_0 \cdot c_0 s_0$ with $(p_0, c_0, s_0) \in \A^* \times \A \times \A^*$ satisfying $p_0 c_0 s_0 = \sigma(a)$, so the result holds in this case.
	
	We now assume that $n \geq 2$ and proceed by induction. That is, we assume that any shift of $\sigma^{n-1}(a)$ has a finite prefix-suffix decomposition. Let $w' = S^{N'}(\sigma^{n-1}(a))$, where $N' \leq |\sigma^{n-1}| - 1$ is chosen so $w = S^k(\sigma(w'))$ with the minimum possible $k \geq 0$. Let $(p_m', c_m', s_m')_{0 \leq m \leq n - 2}$ be the finite prefix-suffix decomposition of $w'$. We have that $w' = \sigma^{n-2}(p_{n-2}') \ldots p_0' \cdot c_0' s_0' \ldots \sigma^{n-2}(s_{n-2}')$. By minimality of $k$, we conclude that $k \leq |\sigma(c_0')| - 1$. We write the word $S^k(\sigma(c_0'))$ as $p_0 \cdot c_0  s_0$ and then define the finite prefix-suffix decomposition of $w$ as $(p_0, c_0, s_0)(p_0', c_0', s_0') \ldots (p_{n-2}', c_{n-2}', s_{n-2}')$. By definition of $k$ and $w'$, this sequence satisfies the desired properties.
\end{proof}

The following lemma allows to relate the finite prefix-suffix decompositions of minimal words with extreme points of finite order.

\begin{lem}\label{lem:minimalprefix}
Let $a \in \A$, $n \geq 1$ and $w = S^N(\sigma^n(a))$ with $N \leq |\sigma^n(a)| - 1$.
Assume $w$ is minimal for $\sigma^n(a)$ and $\gamma$ and let $(p_m, c_m, s_m)_{0 \leq m \leq n-1}$ be its finite prefix-suffix decomposition. If $\gamma=\tau\Gamma$ for $\tau\in\SS^1$ and the first $n$ coordinates of $x \in \S_a$ coincide with $(p_{n-m}, c_{n-m}, s_{n-m})_{1 \leq m \leq n}$, then 
$v_a^{(n)}(\beta_0^{n}\tau) = \Re(\beta_0^n\tau \z_a(x))$.
\end{lem}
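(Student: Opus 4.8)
The plan is to rewrite both sides as real parts of $\gamma$ evaluated on prefixes of $\sigma^n(a)$ and then appeal to the combinatorial characterization of minimality in the remark after Definition~\ref{def:localminimal}. Write the finite prefix-suffix decomposition of $w$ as $(p_m,c_m,s_m)_{0\le m\le n-1}$, so that $w=\sigma^{n-1}(p_{n-1})\cdots\sigma(p_1)p_0\cdot c_0s_0\cdots\sigma^{n-1}(s_{n-1})$ and the pointing prefix of $w$ is $u:=w_{-N}\cdots w_{-1}=\sigma^{n-1}(p_{n-1})\cdots\sigma(p_1)p_0$. Since $\gamma=\tau\Gamma$ is again an eigenvector of $M$ for $\beta$, \eqref{eq:gamma(sigma^n)} gives $\gamma(u)=\sum_{j=0}^{n-1}\beta^j\gamma(p_j)$. (Here and below I work with $\z_a^{(n)}(x)$, which is all that the hypothesis on the first $n$ coordinates of $x$ determines.)

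First I would compute $\beta_0^n\tau\,\z_a^{(n)}(x)$: since $p_m^x=p_{n-m}$ for $1\le m\le n$, we get $\z_a^{(n)}(x)=\sum_{m=1}^n\beta^{-m}\Gamma(p_{n-m})=\beta^{-n}\sum_{j=0}^{n-1}\beta^j\Gamma(p_j)$, and using $\beta_0^n\beta^{-n}=|\beta|^{-n}$ this gives $\beta_0^n\tau\,\z_a^{(n)}(x)=|\beta|^{-n}\gamma(u)$. Next, for arbitrary $y\in\S_a$, the analogous manipulation (using $\beta_0^n\beta^{-m}=|\beta|^{-n}\beta^{n-m}$ together with \eqref{eq:gamma(sigma^n)} and additivity of $\gamma$ over concatenation) yields $\beta_0^n\tau\,\z_a^{(n)}(y)=|\beta|^{-n}\gamma(u_y)$ with $u_y:=\sigma^{n-1}(p_1^y)\sigma^{n-2}(p_2^y)\cdots\sigma(p_{n-1}^y)p_n^y$. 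The key combinatorial point is that $u_y$ is a proper prefix of $\sigma^n(a)$: peeling the nested substitution, $\sigma^n(a)=\sigma^{n-1}(p_1^y)\sigma^{n-1}(c_1^y)\sigma^{n-1}(s_1^y)$, then expanding $\sigma^{n-1}(c_1^y)$ via $\sigma(c_1^y)=p_2^yc_2^ys_2^y$, and iterating $n$ times, shows $\sigma^n(a)=u_y\,c_n^y\,s_n^y\,\sigma(s_{n-1}^y)\cdots\sigma^{n-1}(s_1^y)$, and $c_n^y$ is a single letter; for $y=x$ this $u_y$ is exactly $u$.

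Finally, minimality of $w$ for $\sigma^n(a)$ and $\gamma$ says precisely that $\Re(\gamma(u))\le\Re(\gamma(w'))$ for every proper prefix $w'$ of $\sigma^n(a)$; taking $w'=u_y$ and multiplying by $|\beta|^{-n}>0$ gives $\Re(\beta_0^n\tau\,\z_a^{(n)}(x))\le\Re(\beta_0^n\tau\,\z_a^{(n)}(y))$ for all $y\in\S_a$. Since $\F_a^{(n)}=\{\z_a^{(n)}(y):y\in\S_a\}$ and $\z_a^{(n)}(x)\in\F_a^{(n)}$, this exhibits $\z_a^{(n)}(x)$ as a minimizer, i.e.\ $v_a^{(n)}(\beta_0^n\tau)=\Re(\beta_0^n\tau\,\z_a^{(n)}(x))$. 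I expect the only delicate part to be the bookkeeping in the peeling step that identifies $u_y$ with a prefix of $\sigma^n(a)$ and, for $y=x$, matches it with the pointing prefix of $w$; everything else is the eigenvector identity \eqref{eq:gamma(sigma^n)} and the minimality characterization.
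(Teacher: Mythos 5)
Your proof is correct and follows essentially the same route as the paper's: express $\Re(\beta_0^n\tau\,\z_a^{(n)}(x))$ as $|\beta|^{-n}\Re(\gamma(u))$ with $u$ the negative part of $w$, observe that for any $y\in\S_a$ the quantity $\Re(\beta_0^n\tau\,\z_a^{(n)}(y))$ equals $|\beta|^{-n}\Re(\gamma(w'))$ for a proper prefix $w'$ of $\sigma^n(a)$, and invoke the minimality characterization of $w$. Your write-up is in fact more careful than the paper's (which has small index typos such as writing $p_0^y$ for an element $y\in\S_a$ indexed from $1$), and you correctly note that only $\z_a^{(n)}(x)$ is determined by the hypothesis, not $\z_a(x)$.
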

\begin{proof}
	Put $w=w_{-N}\ldots w_{N'}$ with $N, N' \geq 0$. By definition of the finite prefix-suffix decomposition, we have that $\sigma^{n-1}(p_{n-1}) \ldots p_0 = w_{-N}w_{-N+1}\ldots w_{-1}$. 
	Applying $\gamma$, using \eqref{eq:gamma(sigma^n)} and taking real parts, we obtain:
	$$|\beta|^{n}\Re\left(\sum_{m=1}^{n}\beta^{-m}\gamma(p_{n-m})\right) = |\beta|^n \Re(\beta_0^n\tau \z_a^{(n)}(x)) \leq \Re(\gamma(w'))$$
	for every prefix $w'$ of $\sigma^n(a)$.	
	On the other hand, for any $y \in \S_a$ there exists a prefix $w'$ of $\sigma^n(a)$ such that $|\beta|^{n}\Re(\beta_0^n\tau \z_a^{(n)}(y)) = \Re(\gamma(w'))$. Indeed, we can take $w' = \sigma^{n-1}(p_{n-1}^y)\ldots p_0^y$.
	Thus $\Re(\beta_0^n \tau \z_a^{(n)}(x)) \leq \Re(\beta_0^n \tau \z_a^{(n)}(y))$ for all $y \in \S_a$.
\end{proof}

The following corollary of previous lemma was implicit in the proof of Lemma 5.13 in \cite{CGM}.

\begin{cor}\label{cor:minimalprefix}
Let $(p_m, c_m, s_m)_{m \geq 0}$ be the prefix-suffix decomposition of a minimal sequence $\omega$ for the vector $\gamma = \tau\Gamma$ for some $\tau \in \SS^1$. Let $a = c_n$ and $x \in \S_a$ be such that its first $n$ coordinates coincide with $(p_{n-k}, c_{n-k}, s_{n-k})_{1 \leq k \leq n}$. Then $v_a^{(n)}(\beta_0^n \tau)=\Re(\beta_0^n \tau \z_a^{(n)}(x))$.
\end{cor}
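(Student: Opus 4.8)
The plan is to deduce Corollary~\ref{cor:minimalprefix} from Lemma~\ref{lem:minimalprefix} by reducing the bi-infinite minimal sequence $\omega$ to a finite pointed word of the form $S^N(\sigma^n(c_n))$ that is minimal in the sense of Definition~\ref{def:localminimal}. First I would recall the structure of the prefix-suffix decomposition: writing $a=c_n$, the central part of $\omega$ is
$$
\ldots\sigma^{n+1}(p_{n+1})\,\sigma^{n}(p_n)\ldots p_0\cdot c_0 s_0\ldots\sigma^{n}(s_n)\,\sigma^{n+1}(s_{n+1})\ldots,
$$
and $\sigma(c_{m+1})=p_m c_m s_m$ for all $m\geq 0$, so that $p_m c_m s_m$ is a factor of $\sigma^{?}$ built from $c_n$. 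In particular, the block
$$
w \;=\; \sigma^{n-1}(p_{n-1})\ldots p_0 \cdot c_0 s_0\ldots \sigma^{n-1}(s_{n-1})
$$
is precisely a pointed version of $\sigma^{n}(c_n)$; that is, $w=S^N(\sigma^n(a))$ for the appropriate $N=|\sigma^{n-1}(p_{n-1})\ldots p_0|$, and $(p_m,c_m,s_m)_{0\leq m\leq n-1}$ is exactly its finite prefix-suffix decomposition in the sense of the preceding lemma (with $p_{n-1}c_{n-1}s_{n-1}=\sigma(c_n)=\sigma(a)$).

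Next I would verify that $w$ is \emph{minimal for $\sigma^n(a)$ and $\gamma$} in the sense of Definition~\ref{def:localminimal}. Since $\omega$ is a minimal sequence for $\gamma$, we have $\Re(\gamma_m(\omega))\geq 0$ for all $m\in\Z$. By the equivalent reformulation noted after Definition~\ref{def:minimalsequences} (and Lemma 4.3 in \cite{CGM}), this is the same as saying that the prefix of $\omega$ ending just before coordinate $0$, namely $\ldots p_0 = w_{-N}\ldots w_{-1}$ viewed inside the appropriate iterate, minimizes $\Re(\gamma(\cdot))$ over all prefixes. Restricting this minimality statement to the finite sub-block $w$, which is a factor of $\omega$ whose prefixes (extending to the right from coordinate $-N$) are all prefixes of $\sigma^n(a)$, we get exactly $\Re(\gamma_m(w))\geq 0$ for all $-N\leq m\leq N'$ — the condition that $w=S^N(\sigma^n(a))$ is minimal for $\sigma^n(a)$ and $\gamma$. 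This is the only genuinely substantive step: one must be careful that the minimality of the bi-infinite sequence, which ranges over all of $\Z$, does restrict correctly to the finite window, and that the prefixes of the window are genuinely prefixes of $\sigma^n(a)$ rather than of some larger iterate. The identity $p_m c_m s_m=\sigma(c_{m+1})$ together with the convention $\sigma(\varepsilon)=\varepsilon$ makes this bookkeeping work.

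Finally, with $w=S^N(\sigma^n(a))$ minimal for $\sigma^n(a)$ and $\gamma=\tau\Gamma$, and with $x\in\S_a$ whose first $n$ coordinates coincide with $(p_{n-k},c_{n-k},s_{n-k})_{1\leq k\leq n}$, Lemma~\ref{lem:minimalprefix} applies verbatim and yields $v_a^{(n)}(\beta_0^n\tau)=\Re(\beta_0^n\tau\,\z_a^{(n)}(x))$, which is the assertion. (Note that since $\z_a^{(n)}$ depends only on the first $n$ coordinates of $x$, writing $\z_a(x)$ or $\z_a^{(n)}(x)$ in the conclusion of Lemma~\ref{lem:minimalprefix} is immaterial.) I expect the main obstacle to be purely notational: correctly identifying which finite pointed word of $\sigma^n(a)$ is cut out of $\omega$ by the prefix-suffix decomposition truncated at level $n$, and confirming that its local minimality is inherited from the global minimality of $\omega$; once that identification is in place, the corollary is an immediate instance of Lemma~\ref{lem:minimalprefix}.
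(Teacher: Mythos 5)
Your proposal is correct and follows exactly the route the paper intends: the paper states the corollary with only the remark that it is ``implicit in the proof of Lemma 5.13 in [CGM],'' and you have filled in that implicit argument by truncating the prefix-suffix decomposition of $\omega$ at level $n$ to produce the pointed word $w=S^N(\sigma^n(a))$, observing that its local minimality is inherited from the global minimality of $\omega$ because $\gamma_m(\omega)=\gamma_m(w)$ on the finite window, and then applying Lemma~\ref{lem:minimalprefix} directly. You are also right that the $\z_a(x)$ in the statement of Lemma~\ref{lem:minimalprefix} should read $\z_a^{(n)}(x)$, as is clear from its proof; the corollary uses the correct form.
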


Finally, using the following lemma we only have to consider minimal sequences with infinitely many non-empty prefixes and suffixes. This argument was given in the proof of Proposition 7.8 in \cite{CGM}, but we state it here for convenience.

\begin{lem}\label{lem:infinitenonempty}
Let $(p_m, c_m, s_m)_{m \geq 0}$ be the prefix-suffix decomposition of a minimal sequence $\omega$ for the vector $\gamma = \tau\Gamma$ for some $\tau \in \SS^1$. Then, there exist infinitely many $m \geq 0$ such that $s_m \neq \varepsilon$. Analogously, there exist infinitely many $m \geq 0$ such that $p_m \neq \varepsilon$.
\end{lem}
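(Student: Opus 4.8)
The plan is to argue by contradiction: suppose only finitely many prefixes $p_m$ are nonempty (the argument for suffixes is symmetric). Then there is some index $M$ such that $p_m = \varepsilon$ for all $m \geq M$. The central part of $\omega$ has the form
$$
\ldots \sigma^{M}(p_M) \sigma^{M-1}(p_{M-1})\cdots p_0 \cdot c_0 s_0 \sigma^1(s_1)\cdots,
$$
and since $\sigma(c_{m+1}) = p_m c_m s_m = c_m s_m$ for all $m \geq M$, the letter $c_M$ sits at the very left end of the block $\sigma^m(c_m)$ for every $m \geq M$; in particular, for each $n \geq 0$ the coordinate $\omega_{-k_n}$ with $k_n = |\sigma^M(p_M)\cdots p_0|$ fixed, actually wait — the point is that the negative coordinate where $\sigma^m(c_M)$ begins is the \emph{same} position for all $m \geq M$. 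Call this position $-L$, so $\omega_{-L}\omega_{-L+1}\cdots$ begins with $\sigma^m(c_M)$ for arbitrarily large $m$, hence $\omega_{-L}\omega_{-L+1}\cdots = \sigma^{\infty}(c_M)$ is the right-infinite fixed point of $\sigma$ starting at $c_M$ (up to taking a power of $\sigma$ so that such a fixed point exists, which is harmless by primitivity).

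Now I use the minimality of $\omega$ for $\gamma$. By Definition \ref{def:minimalsequences}, $\Re(\gamma_n(\omega)) \geq 0$ for all $n \in \Z$; applied at $n = -L$ and $n = -L + j$ this gives $\Re(\gamma(\omega_{-L}\cdots\omega_{-L+j-1})) \geq -\Re(\gamma_{-L}(\omega)) $ for all $j \geq 0$, i.e.\ the real parts of the partial sums of $\gamma$ along the right-infinite word $u := \omega_{-L}\omega_{-L+1}\cdots = \sigma^{\infty}(c_M)$ are bounded below by a constant. But the partial sums along a fixed point of a substitution with a non-Perron eigenvalue $\beta$, $|\beta|>1$, of the incidence matrix are \emph{not} bounded: using \eqref{eq:gamma(sigma^n)}, the partial sum of $\gamma$ along the prefix $\sigma^m(c_M)$ of $u$ equals $\beta^m \gamma(c_M)$, and since $\gamma(c_M) \neq 0$ (as $\gamma \neq 0$ is a $\beta$-eigenvector with $|\beta|>1$, so it cannot vanish on a letter whose iterates generate everything — this needs a small argument via primitivity) we get $|\Re(\beta^m\gamma(c_M))|\to\infty$ along a subsequence of $m$, because $\beta_0 = \beta/|\beta|$ is not a root of unity so $\beta^m\gamma(c_M)$ is not eventually confined to the imaginary axis. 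This contradicts the lower bound, so infinitely many $p_m$ are nonempty. The suffix statement follows by the mirror-image argument applied to negative coordinates, or equivalently by applying the prefix statement to the reversed substitution.

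The main obstacle I expect is the bookkeeping in the first paragraph: carefully identifying the position $-L$ and verifying that $\omega$ restricted to $[-L,\infty)$ is genuinely the substitution fixed point $\sigma^{\infty}(c_M)$ — this uses $\sigma(c_{m+1}) = c_m s_m$ and the uniqueness/structure of the prefix-suffix decomposition, and one must be slightly careful because a primitive substitution need not have a one-letter fixed point, so one replaces $\sigma$ by a suitable power $\sigma^r$ fixing $c_M$ as a prefix. A secondary technical point is showing $\gamma(c_M) \neq 0$; one way is to note that if $\gamma$ vanished on $c_M$ then by \eqref{eq:gamma(sigma^n)} it would vanish on every prefix $\sigma^m(c_M)$ and hence (taking differences and using primitivity to reach every letter) on all of $\A$, contradicting $\gamma \neq 0$. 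Once these structural facts are in place, the contradiction with minimality is immediate from the unboundedness of $\Re(\beta^m\gamma(c_M))$.
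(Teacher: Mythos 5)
Your route is genuinely different from the paper's. The paper argues that if $s_m=\varepsilon$ for all $m\geq n_0$, then $\sigma(c_{m+1})=p_m c_m$ forces $c_{m+1}$ to determine $(p_m,c_m)$ uniquely, so the prefix-suffix decomposition is eventually periodic, and this contradicts Lemma 5.8 of \cite{CGM} (eventually periodic label sequences cannot represent extreme points, hence cannot come from minimal sequences). You instead exhibit a substitution fixed point sitting inside $\omega$ and contradict the definition of minimality directly via the unboundedness of $\Re(\beta^m\gamma(\cdot))$; this is more self-contained. The structural half of your argument is fine: since $(c_m)_{m\geq M}$ is a backward orbit of the finite map ``first letter of $\sigma(\cdot)$'', it is eventually periodic, so after passing to a power $\sigma^r$ one gets $\omega_{-L}\omega_{-L+1}\dotsb=(\sigma^r)^\infty(c)$ for a fixed $L$ and a letter $c$ with $\sigma^r(c)=c\dotsb$, and minimality bounds the real parts of the partial sums along this ray from below by $-\Re(\gamma_{-L}(\omega))$.

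The gap is the claim $\gamma(c_M)\neq 0$. Your justification --- that $\gamma(\sigma^m(c_M))=0$ for all $m$ would force $\gamma$ to vanish on every letter ``by taking differences and primitivity'' --- does not work: the differences $\gamma(\sigma^{m+1}(c_M))-\gamma(\sigma^m(c_M))$ are values of $\gamma$ on whole words $v_m$ (where $\sigma^{m+1}(c_M)=\sigma^m(c_M)v_m$), not on single letters, and a nonzero eigenvector can vanish on a family of words; nothing in the standing hypotheses rules out a zero coordinate of a non-Perron eigenvector. Fortunately your strategy does not need this exact statement. Work with an arbitrary prefix instead of the single letter: for every prefix $w$ of $\sigma^r(c)$, the word $\sigma^{kr}(w)$ is a prefix of $(\sigma^r)^\infty(c)$ and contributes the partial sum $\beta^{kr}\gamma(w)$ by \eqref{eq:gamma(sigma^n)}. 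If $\gamma(w)=0$ for \emph{all} prefixes $w$ of $\sigma^r(c)$, then consecutive prefixes differ by one letter, so $\gamma_b=0$ for every letter $b$ occurring in $\sigma^r(c)$, and primitivity (enlarging $r$ if necessary) yields $\gamma=0$, a contradiction. Hence some prefix $w$ satisfies $\gamma(w)\neq 0$, and since $\beta_0^r$ is not a root of unity, $\Re(\beta^{kr}\gamma(w))\to-\infty$ along a subsequence of $k$, contradicting the lower bound from minimality. With this repair your proof is correct.
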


\begin{proof}
Assume by contradiction that $s_{n_0 + m} = \varepsilon$ for some integer $n_0 \geq 0$ and every $m \geq 0$. We will show that $(p_m, c_m, s_m)_{m \geq 0}$ is eventually periodic, which contradicts Lemma 5.8 in \cite{CGM}. We have that $\sigma(c_{n_0 + m + 1}) = p_{n_0 + m} c_{n_0 + m}$ for every $m \geq 0$. Then, for every $m \geq 0$, the value of $c_{n_0 + m + 1}$ determines a unique possible value for $p_{n_0 + m}$ and $c_{n_0 + m}$. By induction, it is easy to see that $(p_m, c_m, s_m)_{m \geq n_0}$ is periodic. Proving that infinitely many $p_m$'s are nonempty is completely analogous.
\end{proof}

\subsection{Proof of Theorem \ref{thm:enter minimal}}
We have already proved in Theorem \ref{thm:H_minimal} that the restriction of $H$ to each minimal component corresponds to a minimal i.e.m. In this way we can refer to the inverse of $H$ on each minimal component. 

Under the hypothesis of Theorem \ref{thm:enter minimal} we need to prove that: for every good direction $\tau \in \SS^1$ and every minimal sequence $\omega \in \Omega_\sigma$ for $\gamma=\tau \Gamma$,
its prefix-suffix decomposition $(p_m, c_m, s_m)_{m\geq 0}$ satisfies:
\begin{itemize}
\item[(a)] for some $n \geq 0$, $(\beta_0^{n}\tau, (p_{n}, c_n, s_n))$ belongs to a minimal component of $H$;
\item[(b)] $(\beta_0^{m}\tau, (p_m, c_m, s_m)) = H^{-m+n}(\beta_0^{n}\tau, (p_{n}, c_n, s_n))$ for all $m \geq n$.
\end{itemize}

\begin{proof}[Proof of Theorem \ref{thm:enter minimal}]
Let $\tau \in \SS^1$ be a good direction. 
We claim there exists $m_0 \geq 1$ such that for every $m \geq m_0 + 1$ we have
$$H(\beta_0^m\tau,(p_m,c_m,s_m))=(\beta_0^{m-1}\tau,(p_{m-1},c_{m-1},s_{m-1})).$$ If this holds, from Theorem~\ref{thm:H_minimal} we have that 
$H^N$ of any point of $\SS^1\times \bA$ is contained in a minimal component of $H$, where $N\geq 0$ is a universal constant. Therefore, by taking $n = m_0 + N + 1$ we get (a) and (b).

We prove the claim by contradiction. Assume there exists an increasing sequence of integers 
$(m_k)_{k\geq 1}$ such that 
$$H(\beta_0^{m_k}\tau,(p_{m_k},c_{m_k},s_{m_k}))\neq(\beta_0^{{m_k}-1}\tau,(p_{{m_k}-1},c_{{m_k}-1},s_{{m_k}-1})).$$
Without loss of generality we may assume that for all $k\geq 1$:
$c_{m_k}=a$ and there exists $(p,c,s)\neq (\bar p,\bar c,\bar s)$ in $\bA_a$ such that
\begin{itemize}
\item $H(\beta_0^{m_k}\tau, (p_{m_k}, c_{m_k}, s_{m_k})) 
= (\beta_0^{m_k-1}\tau, (p,c,s))$ and
\item $(p_{m_k-1}, c_{m_k-1}, s_{m_k-1}) = (\bar p,\bar c,\bar s)$.
\end{itemize} 
By Remark \ref{rem:exist H} we have that 
$v_a(\beta_0^{m_k}\tau)=v_{a,(p,c,s)}(\beta_0^{m_k}\tau)$ for all $k\geq 1$.

On the other hand, let $x_k \in \S_{a,(p,c,s)}$ be a point such that its first $m_k$ coordinates coincide with $(p_{m_k-m},c_{m_k-m},s_{m_k-m})_{1 \leq m \leq m_k}$. Since $(p_m, c_m, s_m)_{m\geq 0}$ is the prefix-suffix decomposition of a minimal sequence, by Corollary \ref{cor:minimalprefix} we have that
$v_a^{(m_k)}(\beta_0^{m_k}\tau)=\Re(\beta_0^{m_k}\tau\z_a^{(m_k)}(x_k))$ for all $k\geq 1$.
This contradicts Lemma \ref{lem:technical} since $(p,c,s)\neq (\bar p,\bar c,\bar s)$.
\end{proof}

\subsection{Proof of Theorem \ref{thm:minimal points}}

Under the hypotheses of Theorem \ref{thm:minimal points} we need to prove that,
for every good direction $\tau \in \SS^1$, if $(\tau,(p_0,c_0,s_0))$ belongs to a minimal component of $H$ and $H^{-m}(\tau,(p_0,c_0,s_0)) = (\beta_0^m\tau, (p_m,c_m,s_m))$ for every $m\geq 0$, then 
$(p_m,c_m,s_m)_{m\geq 0}$ is the prefix-suffix decomposition of some shift of a minimal sequence for the vector $\gamma=\tau\Gamma$. Let $\T$ be the set of sequences in $(\A^* \times \A \times \A^*)^\N$ that are the prefix-suffix decomposition of a point in $\Omega_\sigma$. It is not difficult to prove that:
$$\T = \{ (p_m,c_m,s_m)_{m \geq 0}; \sigma(c_m) = p_{m-1} c_{m-1} s_{m-1}, m \geq 1 \}.
$$

\begin{proof}[Proof of Theorem \ref{thm:minimal points}]
Let $\tau \in \SS^1$ be good direction. Consider a sequence
$\omega \in \Omega_\sigma$ such that its prefix-suffix decomposition is the one given in the statement of the theorem, \emph{i.e.}, $(p_m, c_m,s_m)_{m\geq 0}$.

We will start by proving that $s_m$ is not empty for infinitely many $m \geq 0$. An analogous proof shows that $p_m$ is not empty for infinitely many $m \geq 0$.

Assume by contradiction that the suffixes are eventually empty. Then, we have that $(p_m, c_m, s_m)_{m \geq 0}$ is eventually periodic as in the proof of Lemma \ref{lem:infinitenonempty}. Thus, there exists $m_0 \geq 0$ and $\ell \geq 0$ such that $(p_{m+k\ell}, c_{m+k\ell}, s_{m+k\ell}) = (p_m, c_m, s_m)$ for every $m \geq m_0$ and $k \geq 0$. 

Let $a = c_{m_0}$ and $x_k \in \S_a$ such that its first $k\ell$ coordinates are
$$(p_{m_0+1+k\ell-m}, c_{m_0+1+k\ell-m}, s_{m_0+1+k\ell-m})_{1 \leq m \leq k\ell}.$$
By Lemma \ref{converg-exponencial-2}, we have that $\Re(\beta_0^{m_0 + k\ell}\tau \z_a^{(k\ell)}(x_k)) - v_a(\beta_0^{m_0 + k\ell} \tau) \leq C |\beta|^{-k\ell}$. Let $x \in \S_a$ be the limit of $(x_k)_{k \geq 1}$, which is periodic. By taking appropriate subsequences we get that $x$ is an extreme point in $\F_a$ for some direction in $\SS^1$. This contradicts Lemma 5.8 in \cite{CGM} which states that eventually periodic elements cannot represent extreme points.
\smallskip

The sequence $(p_m, c_m, s_m)_{m \geq 0}$ induces a partition of the non-zero integers in the following way: the set $A_m$ is defined by the coordinates covered by $\sigma^m(p_m)$ or $\sigma^m(s_m)$ in $\sigma^{m}(p_m) \ldots p_0 \cdot c_0 s_0 \ldots \sigma^{m}(s_{m})$, where the dot separates negative and non-negative coordinates. Since infinitely many $p_m$'s and $s_m$'s are nonempty, we have that $\bigcup_{m \geq 0} A_m = \Z \setminus \{0\}$. We define $u_m(\omega) = \min_{n \in A_m} \Re(\gamma_n(\omega))$, $B_m = \bigcup_{n \leq m} A_n$ and $v_m(\omega) = \min_{n \in B_m} \Re(\gamma_n(\omega))$.

Suppose now by contradiction that $\omega$ is not in the trajectory by the shift of a minimal sequence for the vector $\gamma=\tau\Gamma$. Then, there exists an increasing sequence of integers $(m_k)_{k \geq 1}$ such that $(u_{m_k-1}(\omega))_{k \geq 1}$ is strictly decreasing and equal to $(v_{m_k-1}(\omega))_{k \geq 1}$. Let $n_k \in A_{m_k-1}$ such that $\Re(\gamma_{n_k}(\omega)) = u_{m_k-1}(\omega)$. Without loss of generality, we assume that $c_{m_k} = a$ for every $k \geq 1$.

Let $(p_m^{(k)}, c_m^{(k)}, s_m^{(k)})_{m \geq 0}$ be the prefix-suffix decomposition of $S^{n_k}(\omega)$. We have that $c_{m_k}^{(k)} = c_{m_k} = a$ and that, by definition, 
$v_a^{(m_k)}(\beta_0^{m_k}\tau)=\Re(\beta_0^{m_k}\tau \z_a(x_k))$, where $x_k \in \S_a$ is such that its first $m_k$ coordinates are $(p^{(k)}_{m_k-m},c^{(k)}_{m_k-m},s^{(k)}_{m_k-m})_{1\leq m \leq m_k}$. Since by definition $n_k \in A_{m_k-1}$, we have that:
$$(p_{m_k-1}^{(k)}, c_{m_k-1}^{(k)}, s_{m_k-1}^{(k)}) \neq (p_{m_k-1}, c_{m_k-1}, s_{m_k-1}).$$
We may assume that for every $k \geq 1$:
$({p}_{m_k-1}, {c}_{m_k-1}, {s}_{m_k-1} ) = (p,c,s)$ and
$({p}^{(k)}_{m_k-1}, {c}^{(k)}_{m_k-1}, {s}^{(k)}_{m_k-1} ) = (\bar p,\bar c,\bar s)$.

By definition of $H$, $(p,c,s)$ is chosen so that 
$v_a(\beta_0^{m_k}\tau)=v_{a,(p,c,s)}(\beta_0^{m_k}\tau)$ for all $k \geq 1$.	
This fact contradicts Lemma \ref{lem:technical} since $(p,c,s)\neq (\bar p,\bar c,\bar s)$.

\end{proof}

\subsection{Proof of Corollary \ref{cor:finite min points}} We start by showing that the orbits of minimal sequences are finite:

\begin{lem}\label{cor:finite min orbits}
Given a good direction $\tau\in\SS^1$, there are finitely many orbits of minimal sequences for the eigenvector $\gamma=\tau\Gamma$.	
\end{lem}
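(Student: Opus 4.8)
The plan is to combine the two characterization theorems already proved (Theorems~\ref{thm:enter minimal} and \ref{thm:minimal points}) with the finiteness of the minimal components of $H$ established in Theorem~\ref{thm:H_minimal}. The key observation is that the shift $S$ on $\Omega_\sigma$ corresponds, at the level of prefix-suffix decompositions, to a map that is closely related to $H$: shifting a sequence $\omega$ by one position either shifts the prefix-suffix decomposition (when the first prefix is nonempty) or reindexes it in a more complicated but deterministic way; in either case, the ``tail'' of the prefix-suffix decomposition is governed by iterates of $H^{-1}$ along the orbit $(\beta_0^m\tau)_{m\ge 0}$.

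First I would show that the map sending a minimal sequence $\omega$ for $\gamma=\tau\Gamma$ to the first index $n\ge 0$ from Theorem~\ref{thm:enter minimal} (for which $(\beta_0^n\tau,(p_n,c_n,s_n))$ lies in a minimal component of $H$) can be taken uniformly bounded: indeed, by Theorem~\ref{thm:H_minimal} and Corollary~\ref{thm:H^N_minimal} there is a universal $N\ge 0$ such that $H^N$ maps all of $X$ into $\Lambda_H$, so once $(\beta_0^m\tau,(p_m,c_m,s_m))$ follows the orbit of $H$ (which happens for $m\ge m_0+1$ by the claim inside the proof of Theorem~\ref{thm:enter minimal}), applying $H$ a further $N$ times lands in a minimal component. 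The subtlety is that $m_0$ a priori depends on $\omega$; however, a minimal sequence is determined by its prefix-suffix decomposition, which in turn — by Theorem~\ref{thm:minimal points} applied in the minimal component — is determined by the point $(\beta_0^n\tau,(p_n,c_n,s_n))\in\Lambda_H$ together with the finitely many initial triples $(p_0,c_0,s_0),\dots,(p_{n-1},c_{n-1},s_{n-1})$. Since $\Lambda_H$ is a finite union of intervals and the restriction of $H$ to each minimal component is an i.e.m.\ (hence invertible), the point $(\beta_0^n\tau,(p_n,c_n,s_n))$ ranges over the (at most countable, but here we want) finite set $\Lambda_H\cap(\{\beta_0^n\tau;\,n\ge 0\}\times\bA)$ in each fiber; the real content is that for a \emph{fixed} $\tau$ only finitely many values of $n$ can actually arise as the ``entry time''.

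The cleanest way to get finiteness of orbits, rather than of sequences, is to argue directly on orbits. Given two minimal sequences $\omega,\omega'$ for $\gamma$, they lie in the same $S$-orbit as soon as, after suitable shifts, their prefix-suffix decompositions eventually agree — more precisely, as soon as there exist $n,n'\ge 0$ with $(\beta_0^n\tau,(p_n,c_n,s_n))$ and $(\beta_0^{n'}\tau,(p'_{n'},c'_{n'},s'_{n'}))$ lying in the same minimal component of $H$ and on the same $H$-orbit. Thus the number of orbits of minimal sequences is bounded by the number of $H$-orbits inside $\Lambda_H$ that are realized, which — since $H$ restricted to each of the finitely many minimal components is a \emph{minimal} i.e.m.\ — is one per minimal component containing a realized point; hence at most the number of minimal components of $H$. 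Concretely, I would fix a minimal sequence $\omega$ with entry triple $\xi_0=(\beta_0^n\tau,(p_n,c_n,s_n))$ in a minimal component $\mathcal{C}$, and show via Theorem~\ref{thm:minimal points} that the minimal sequence associated with any other entry triple $\xi_0'$ in $\mathcal{C}\cap(\{\beta_0^m\tau\}_{m}\times\bA)$ lying on the $H$-orbit of $\xi_0$ is a shift of $\omega$; combined with minimality of $H|_{\mathcal{C}}$ this pins everything in $\mathcal{C}$ down to a single $S$-orbit.

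The main obstacle I expect is the bookkeeping that translates the $H$-orbit relation back into the $S$-orbit relation on $\Omega_\sigma$: shifting a point of $\Omega_\sigma$ does not simply shift its prefix-suffix decomposition when prefixes vanish, so one must verify that the ``eventual agreement of prefix-suffix decompositions up to reindexing'' genuinely corresponds to being in the same shift orbit, using the recognizability/prefix-suffix machinery of Section~\ref{sec:preliminaries} and Lemma~\ref{lem:infinitenonempty} (which guarantees infinitely many nonempty prefixes and suffixes, so no degenerate eventually-periodic behavior occurs). Once that correspondence is nailed down, the finiteness is immediate from Theorem~\ref{thm:H_minimal}, and Corollary~\ref{cor:finite min points} follows because each orbit of minimal sequences, being the $S$-orbit of a point in the minimal subshift $\Omega_\sigma$ that is \emph{not} periodic, contributes finitely many distinct sequences only if the orbit is finite — which it is not — so in fact one should phrase the corollary's conclusion as: finitely many orbits, and moreover each minimal sequence has, by Theorem~\ref{thm:enter minimal}, a prefix-suffix decomposition ultimately determined by a point of $\Lambda_H$, so the set of minimal sequences itself is finite precisely because the relevant points of $\Lambda_H$ over the countable set $\{\beta_0^m\tau\}_m$ that are realized form a finite set — this last finiteness being the step I would isolate as the crux and prove by the uniform entry-time bound $N$ above.
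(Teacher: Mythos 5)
Your overall strategy is in the right spirit (use Theorem~\ref{thm:enter minimal} to reduce to pre-orbits of $H$ in its minimal components, then invoke Lemma~\ref{lem:infinitenonempty} to pass from ``prefix-suffix decompositions that eventually agree'' to ``same $S$-orbit''), and the issue you flag as the ``main obstacle'' is indeed where the real work is. However, the crucial step of your proposed argument contains a genuine gap.

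You claim that minimality of $H$ restricted to a minimal component $\mathcal{C}$ ``pins everything in $\mathcal{C}$ down to a single $S$-orbit'', and use this to bound the number of orbits of minimal sequences by the number of minimal components. But minimality of an i.e.m.\ means that every orbit is dense, not that there is only one orbit: a minimal i.e.m.\ has uncountably many distinct orbits. More pointedly, two entry triples $(\tau,(q,a,r))$ and $(\tau,(q',a',r'))$ with the \emph{same} first coordinate $\tau$ but different second coordinates can both lie in the same minimal component, yet they are never on the same $H$-orbit: $H^k(\tau,(q,a,r))$ has first coordinate $\beta_0^{-k}\tau$, and $\beta_0^{-k}\tau=\tau$ forces $k=0$ since $\beta_0$ is not a root of unity. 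So ``on the same $H$-orbit'' does not collapse the fiber over $\tau$ inside a minimal component, and minimality gives you no help in merging these distinct $H$-orbits into one $S$-orbit. As stated, your argument does not yield a bound on the number of $S$-orbits of minimal sequences.

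The paper's proof sidesteps this entirely. Instead of analyzing $H$-orbits, it pushes the entry triple forward by $H^n$ to the level $m=0$: since $(\beta_0^n\tau,(p_n,c_n,s_n))$ lies in a minimal component, so does $H^n(\beta_0^n\tau,(p_n,c_n,s_n))=(\tau,(q,a,r))$, and the latter has first coordinate exactly $\tau$. Now the set $\Omega_\sigma^\tau$ of sequences whose prefix-suffix decomposition is the projection to $\bA$ of $(H^{-m}(\tau,(q,a,r)))_{m\geq 0}$ for some $(\tau,(q,a,r))\in\Lambda_H$ has cardinality at most $|\bA|$, because the second coordinate $(q,a,r)$ ranges over a finite set. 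Every minimal sequence has prefix-suffix decomposition that eventually agrees (for $m\geq n$) with that of one of these finitely many sequences, and Lemma~\ref{lem:infinitenonempty} then gives that they are in the same shift orbit. So the bound is simply $|\bA|$, obtained by counting second coordinates in the $\tau$-fiber, not by counting minimal components. This avoids having to relate $H$-orbits to $S$-orbits beyond the one case needed (eventual agreement of prefix-suffix decompositions), and it also shows that your stated bound ``at most the number of minimal components'' is not what the argument delivers.
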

\begin{proof}

Let $\Omega_\sigma^{\tau}$ be the set of sequences in $\Omega_\sigma$ whose prefix-suffix decomposition is the projection on $\bA$ of $(H^{-m}(\tau,(q,a,r)))_{m\geq 0}$ for some $(\tau,(q,a,r))$ belonging to a minimal component of $H$. Since $\bA$ is finite, we obtain that $\Omega_\sigma^\tau$ is finite.

Let $(p_m,c_m,s_m)_{m\geq 0}$ be the prefix-suffix decomposition of a minimal sequence $\omega$ for the vector $\gamma=\tau\Gamma$. By Theorem \ref{thm:enter minimal}
there exists $n\geq 0 $ such that
\begin{itemize}
	\item $(\beta_0^{n}\tau, (p_{n}, c_{n}, s_{n}))$ 
	belongs to a minimal component of $H$ and
	\item $(\beta_0^m \tau, (p_m, c_m, s_m)) = H^{-m+n}(\beta_0^n \tau, (p_n, c_n, s_n))$ for every $m \geq n$.
\end{itemize}
Let $\bar \omega \in \Omega_\sigma^\tau$ be the sequence whose prefix-suffix decomposition 
is the projection on $\bA$ of $(H^{-m+n}(\beta_0^{n}\tau, (p_{n}, c_{n}, s_{n})))_{m\geq 0}$. We have that the prefix-suffix decomposition of $\omega$ and $\bar \omega$ coincide for every $m \geq n$. But, by Lemma \ref{lem:infinitenonempty}, infinitely many $s_m$'s and $p_m$'s are nonempty. Therefore $\omega$ belongs to the orbit of $\bar \omega$ by the shift action on $\Omega_\sigma$. This concludes the proof.
\end{proof}

Fix a slope vector $\ell=(\ell_a;a\in\A)\in\R^{\A}$ and for $\omega=(\omega_m)_{m\in\Z}\in\Omega_\sigma$, denote $\ell_n(\omega) = \prod_{m=0}^{n - 1} \ell_{\omega_m}$ and
$\ell_{-n}(\omega) = \prod_{m=-n}^{-1} \ell_{\omega_m}^{-1}$ for $n \geq 0$.
Let
$$\Sigma(\omega, \ell)= \sum_{n \in \Z} \ell_n(\omega),$$
which might be equal to $\infty$ (observe that every term of the series is positive).
If $w=w_{-N} \ldots \omega_{-1} \cdot \omega_0 \ldots \omega_{N'}$ is a finite (pointed) word, we similarly define $\ell_n(w) = \prod_{m = 0}^{n-1} \ell_{w_m}$ and $\ell_{-n}(w) = \prod_{m = -n}^{-1} \ell_{w_m}^{-1}$ for $n \geq 0$. Letting $|w|_+=N'+1$ and
$|w|_- = N$, we set $\Sigma(w, \ell)=\sum_{n=-|w|_-}^{|w|_+} \ell_n(w)$.
\medbreak
The main result of \cite{CGM}, Theorem 7.1, states that if $\omega$ is a minimal sequence for a good eigenvector $\gamma$ and $\ell=\exp(-\Re(\gamma))$, then $\Sigma(\omega, \ell) < \infty$. 
In the next lemma we will prove a similar result for a sequence of finite words which are minimal in the sense of Definition~\ref{def:localminimal} and eigenvectors which are good in the sense of Definition~\ref{def:good}.

\begin{lem}\label{lem:cota_minimos}
	Let $\gamma=\tau \Gamma$ be a good eigenvector and let $\ell=\exp(-\Re(\gamma))$. Fix $a\in\A$ and for every $n \geq 0$ let $w^{(n)}$ be minimal for $\sigma^n(a)$ and $\gamma$. 
	Then, there exists a constant $K>0$ such that 
	$\Sigma(w^{(n)},\ell) < K$ for all $n \geq 0$.

\end{lem}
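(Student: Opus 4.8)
The plan is to reduce the statement to the main theorem of \cite{CGM} (Theorem 7.1) by a renormalization argument, exploiting the self-similar structure encoded in \eqref{eq:gamma(sigma^n)}. The key observation is that a finite pointed word $w^{(n)} = S^N(\sigma^n(a))$ which is minimal for $\sigma^n(a)$ and $\gamma$ has a finite prefix-suffix decomposition $(p_m, c_m, s_m)_{0 \le m \le n-1}$ (by the lemma stated just before Lemma~\ref{lem:minimalprefix}), and that by Lemma~\ref{lem:minimalprefix} the associated truncated point $x\in\S_a$ realizes the finite-order extreme point: $v_a^{(n)}(\beta_0^n\tau) = \Re(\beta_0^n\tau\,\z_a^{(n)}(x))$. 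This ties the combinatorics of $w^{(n)}$ to the orbits of $H$, which is where the good-direction hypothesis will be used.

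First I would estimate $\Sigma(w^{(n)},\ell)$ by splitting the (pointed) word $w^{(n)} = \sigma^{n-1}(p_{n-1})\ldots p_0 \cdot c_0 s_0 \ldots \sigma^{n-1}(s_{n-1})$ into its $2n$ blocks $\sigma^m(p_m)$ and $\sigma^m(s_m)$, $0\le m\le n-1$. For a coordinate lying inside the block $\sigma^m(p_m)$ (resp.\ $\sigma^m(s_m)$), the partial sum $\Re(\gamma_k(w^{(n)}))$ decomposes, using \eqref{eq:gamma(sigma^n)}, as a fixed ``offset'' (the contribution of all blocks strictly between that block and the central dot, which by minimality of $w^{(n)}$ is bounded below by $0$) plus a local contribution of the form $|\beta|^m\Re(\beta_0^m\tau\,\Gamma(\text{prefix of }\sigma(c_{m+1})))$. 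The convexity/telescoping estimate of \cite{CGM} (the one behind Theorem 7.1) applied at scale $m$ together with Lemma~\ref{lem:converg-exponencial} and Lemma~\ref{converg-exponencial-2} should show that the sum of $\ell_k$ over the $m$-th block is bounded by $C|\beta|^{-cm}$ for constants $c>0$, $C>0$ independent of $n$ and $a$, where the exponential gain comes precisely from the good-direction condition $\liminf_n A^n\llbracket \xi-\beta_0^n\tau\rrbracket = \infty$ with $1<A<|\beta|$ (as in Lemma~\ref{lem:technical}). Summing the geometric series over $m\ge 0$ gives the uniform bound $K$.

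Alternatively — and this may be cleaner — I would argue by compactness and a limiting argument: suppose the conclusion fails, so there is a sequence $n_k\to\infty$ with $\Sigma(w^{(n_k)},\ell)\to\infty$. Passing to the associated infinite prefix-suffix data and using that $\bA$ is finite, extract a subsequence along which the prefix-suffix decompositions of the (left- and right-) extensions converge coordinatewise; by Lemma~\ref{lem:minimalprefix} and the continuation property (Lemma~\ref{lem:cont-prop}), the limiting data is the prefix-suffix decomposition of a genuine minimal sequence $\omega$ for $\gamma=\tau\Gamma$ (or a shift thereof), after possibly adjusting by a bounded shift as in Theorem~\ref{thm:enter minimal}. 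Since $\tau$ is good, Theorem 7.1 of \cite{CGM} (whose validity for good directions in the sense of Definition~\ref{def:good} is exactly what Lemma~\ref{lem:cota_minimos} is being used to establish — so here I must instead re-run its proof directly rather than cite it) gives $\Sigma(\omega,\ell)<\infty$; but $\Sigma(w^{(n_k)},\ell)$ is dominated, coordinate by coordinate, by partial sums of $\Sigma(\omega,\ell)$ up to a uniformly bounded error coming from the finitely many non-converged coordinates, contradicting $\Sigma(w^{(n_k)},\ell)\to\infty$.

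The main obstacle is the uniformity in $n$ (and in $a$): the estimate must not degrade as the word length grows, which forces the per-block decay to be genuinely exponential with a rate strictly between $1$ and $|\beta|$, and this is exactly the point where the proof of Theorem 7.1 of \cite{CGM} must be re-examined to confirm it only uses the weaker good-direction hypothesis of Definition~\ref{def:good} (the paper asserts this a few lines above the statement of this lemma). Concretely, one needs that the ``bad'' coordinates — those where a partial sum $\Re(\gamma_k(w^{(n)}))$ comes close to its minimum — are sparse, with gaps growing like $A^{-n}$ for the good constant $A$; controlling these via Lemma~\ref{lem:central} and Lemma~\ref{lem:technical}, and then summing $\exp(-\Re(\gamma_k))$ over a run between consecutive bad coordinates as a geometric-type series, is the technical heart of the argument. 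Everything else (the block decomposition, the offsets being nonnegative by minimality, the finiteness of $\bA$) is bookkeeping.
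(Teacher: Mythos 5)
Your proposal correctly identifies the essential ingredients (the finite prefix--suffix decomposition, Lemma~\ref{lem:minimalprefix}, Lemma~\ref{lem:central}, Lemma~\ref{lem:converg-exponencial}, and the good-direction hypothesis), correctly notes that Theorem~7.1 of \cite{CGM} cannot simply be cited because the notion of good direction has changed, and correctly pinpoints uniformity in $n$ as the crux. However, neither of the two strategies you sketch is actually carried through, and the second one has a genuine logical gap.

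In the compactness strategy you write that $\Sigma(w^{(n_k)},\ell)$ ``is dominated, coordinate by coordinate, by partial sums of $\Sigma(\omega,\ell)$ up to a uniformly bounded error coming from the finitely many non-converged coordinates.'' This is circular. The coordinatewise convergence of the prefix--suffix data gives you, for each fixed $M$, that $\sum_{|j|\le M}\ell_j(w^{(n_k)}) \to \sum_{|j|\le M}\ell_j(\omega)$ as $k\to\infty$. But it gives no control whatsoever on the \emph{tail} $\sum_{|j|>M}\ell_j(w^{(n_k)})$ uniformly in $k$, and that tail is exactly what could be blowing up. Bounding it uniformly is precisely the statement of the lemma; you would need an a~priori decay estimate for $\ell_j(w^{(n_k)})$ that holds for all $j$ and all $k$, which the limiting argument does not supply.

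The first (block) strategy is closer in spirit but the key claim --- that the sum of $\ell_k$ over the $m$-th block is bounded by $C|\beta|^{-cm}$ --- is asserted, not proved, and it is not the form of estimate the paper actually obtains. The paper's proof is a contradiction argument aimed at a \emph{pointwise} lower bound, not a per-block sum: it shows that
\[
\liminf_{n\to\infty}\ \min_{1\le m\le |w^{(n)}|_+}\ \frac{\Re(\gamma_m(w^{(n)}))}{m^\rho} > 0
\]
for an explicit $\rho = \log(|\beta|-\eta)/\log(\alpha^{-1}+\eta) \in (0,1)$, which then gives the stretched-exponential bound $\ell_m(w^{(n)})\le C_1\exp(-C_2 m^\rho)$ and hence a uniform bound on the series. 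To prove the $\liminf$ estimate one assumes it fails along $(n_k,m_k)$, compares the finite prefix--suffix decompositions of $w^{(n_k)}$ and of its shift $S^{m_k}(w^{(n_k)})$, reverses indices to get two sequences $x_{n_k},y_{n_k}\in\S_a$ with distinct first labels, and observes the exact identity $\Re(\tau_k(\z_a(y_{n_k})-\z_a(x_{n_k}))) = |\beta|^{-n_k}\Re(\gamma_{m_k}(w^{(n_k)}))$ where $\tau_k=\beta_0^{n_k}\tau$. Extracting a limit $\tau_k\to\xi$, the u.r.p.\ and Lemma~\ref{lem:central} give a lower bound proportional to $\llbracket\xi-\tau_k\rrbracket$, and amplifying by $A^{n_k}$ with $A=|\beta|/(|\beta|-\eta)$ (the constant in your Definition~\ref{def:good}) gives the contradiction with the good-direction hypothesis. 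Your sketch never produces the comparison between $x_{n_k}$ and $y_{n_k}$, nor the exact relation tying $\Re(\gamma_{m_k}(w^{(n)}))$ to a difference of fractal extreme values; that relation is what converts the good-direction property into the needed uniform decay.
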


Let us remark that the proof of this lemma uses the same techniques as those of Theorem
7.1 in [CGRM17], 
but the present result seems stronger. Indeed, observe that if $\omega$
is a minimal sequence for the vector $\gamma$ and $(p_m, c_m, s_m)_{m \geq 0}$
is its prefix-suffix decomposition, then
for all $n \geq 0$ the pointed word
whose prefix-suffix decomposition is $(p_m, c_m, s_m)_{1 \leq m \leq n-1} $
is minimal for $\sigma^n(c_n)$ and $\gamma$.
Therefore, this lemma easily implies Theorem
7.1 of [CGRM17], namely, that $ \Sigma(\omega,\ell) < \infty. $
We tried hard to prove the converse, unsuccessfully. For this reason, a new proof, although with very similar arguments to that of Theorem
7.1 in [CGRM17], seemed to us unavoidable. Moreover, we
need to account for the different definition of good eigenvector.

\begin{proof}
We will prove the lemma only for the series associated with the positive coordinates of $w^{(n)}$, \emph{i.e.}, we will prove that $\sum_{m=0}^{|w^{(n)}|_+} \ell_m(w^{(n)})$ are uniformly bounded. The proof for the negative part is similar.

Let $1 < A < |\beta|$ such that $\liminf_{n \to \infty} A^n \llbracket \xi - \beta_0^n \tau\rrbracket = \infty$ for every $\xi \in \Psi$, which exists by definition of good direction. Let $\eta = \frac{A-1}{A}|\beta| \in (0, |\beta| - 1)$, which satisfies $\frac{|\beta|}{|\beta|-\eta} = A$. Let $\rho=\frac{\log(|\beta|-\eta)}{\log(\alpha^{-1}+\eta)}>0$. It is sufficient to prove that there exist constants $C_1, C_2 > 0$ such that $\ell_m(w^{(n)}) \leq C_1\exp(-C_2 m^\rho)$ for every $0\leq m \leq |w^{(n)}|_+$ and for every sufficiently large $n$.
This is the same as saying that $\frac{\Re(\gamma_m(w^{(n)})}{m^\rho} \geq C_2 - \frac{\log C_1}{m^\rho}$ for $1 \leq m \leq |w^{(n)}|_+$. To prove this, it is enough to show that
$$\liminf_{n\to\infty} \min_{1\leq m\leq |w^{(n)}|_+} \frac{\Re(\gamma_m(w^{(n)}))}{m^\rho} > 0.$$

We proceed by contradiction and suppose that there exists subsequences of natural numbers $(n_k)_{k \geq 1}$  and  $(m_k)_{k \geq 1}$ such that $0 \leq m_k \leq |w^{(n_k)}|_+$ and
\begin{equation}\label{eq:hypothesis} 
\lim_{k\to\infty} \frac{\Re(\gamma_{m_k}(w^{(n_k)}))}{{m_k}^\rho} = 0. \end{equation}

Let $\bar w^{(n_k)} = S^{m_k}(w^{(n_k)})$ for every $k \geq 1$. Denote by $(p^{(n_k)}_m, c^{(n_k)}_m, s^{(n_k)}_m)_{0\leq m\leq n_k-1}$ and
$(\bar p^{(n_k)}_m, \bar c^{(n_k)}_m, \bar s^{(n_k)}_m)_{0\leq m\leq n_k-1}$ be the finite prefix-suffix decompositions of $w^{(n_k)}$ and $\bar w^{(n_k)}$ respectively.

By taking subsequences if necessary, we can assume that there exist distinct elements $(p,c,s), (\bar p, \bar c, \bar s) \in \A_a$ such that, for $k \geq 1$:
\begin{enumerate}[label=(\roman{*})]
	\item $(p_{n_k - 1}^{(n_k)}, c_{n_k - 1}^{(n_k)}, s_{n_k - 1}^{(n_k)}) = (p, c, s)$;
	\item $(\bar p_{n_k - 1}^{(n_k)}, \bar c_{n_k - 1}^{(n_k)}, \bar s_{n_k - 1}^{(n_k)}) = (\bar p, \bar c, \bar s)$;
	\item $\lim_{k \to \infty} \beta_0^{n_k}\tau = \xi \in \SS^1$.
\end{enumerate}
Now, since $\bar w^{(n_k)}=S^{m_k}(w^{(n_k)})$ with $m_k \geq 0$, we have that
\begin{equation}\label{eq:teo1_1}
\sigma^{n_k - 1}(\bar p_{n_k - 1}^{(n_k)}) \ldots \bar p_0^{(n_k)} = \sigma^{n_k - 1}(p_{n_k - 1}^{(n_k)}) \ldots p_0^{(n_k)} w_0^{(n_k)} \ldots w_{m_k - 1}^{(n_k)}
\end{equation}
for every $k \geq 1$.

We will now reverse the indexes of the finite prefix-suffix decompositions of $w^{(n_k)}$ and $\bar w^{(n_k)}$ in order to obtain sequences in $\S_a $. Let $(x_{n_k})_{k \geq 1}$ and $(y_{n_k})_{k \geq 1}$ be the sequences in $\S_a$ obtained by reversing the coordinates of $(p_m^{(n_k)}, c_m^{(n_k)}, s_m^{(n_k)})_{0 \leq m \leq n_k - 1}$ and $(\bar p_m^{(n_k)}, \bar c_m^{(n_k)}, \bar s_m^{(n_k)})_{0 \leq m \leq n_k - 1}$ and such that $p_m^{x_{n_k}} = p_m^{y_{n_k}}= \varepsilon$ for each $m \geq n_k + 1$.

Without loss of generality we will assume that $x_{n_k}$ converges to $x_\infty \in \S_{a,(p,c,s)}$. By Lemma 5.12 in \cite{CGM}, $x_\infty$ is the representation of an extreme point in $E_a(\tau)$. We will show that any limit point of $y_{n_k}$ in $\S_{a,(\bar p,\bar c,\bar s)}$ is the representation of an extreme point in $E_a(\tau)$ and therefore $\tau$ belongs to $\Psi_a$.
\medbreak

Applying $\gamma$ to \eqref{eq:teo1_1}, using the definitions of $x_{n_k}$ and $y_{n_k}$, and multiplying by $\tau|\beta|^{-n_k}$, we get that for every $k \geq 1$:
$$
\beta_0^{n_k}\tau \sum_{m = 1}^{n_k} \beta^{-m} \Gamma( p_{m}^{y_{n_k}} ) 
= \beta_0^{n_k}\tau \sum_{m = 1}^{n_k} \beta^{-m} \Gamma ( p_{m}^{x_{n_k}} ) + |\beta|^{-n_k} \gamma_{m_k}(w^{(n_k)}).
$$

Let us write $\tau_k=\beta_0^{n_k}\tau$ for $k\geq 1$.
By taking real parts and rearranging the previous expression we obtain:
$$
\Re( \tau_k ( \z_a ( y_{n_k} ) - \z_a ( x_{n_k} ) )) = |\beta|^{-n_k} \Re( \gamma_{m_k}(w^{(n_k)} ).
$$

Furthermore, $\Re(\tau_k \z_a(x_{n_k})) \leq \Re(\tau_k \z_a(y_{n_k}))$ since $w^{(n_k)}$ is minimal. Then we get
\begin{equation}\label{eq:teo1_2}
0 \leq \Re(\beta_0^{n_k}\tau ( \z_a( y_{n_k} )- \z_a(x_{n_k} ) )) = |\beta|^{-n_k} \Re( \gamma_{m_k}(w^{(n_k)} ).
\end{equation}
On the other hand, since $w_0^{(n_k)} \ldots w_{m_k-1}^{(n_k)}$ is a subword of $\sigma^{n_k}(a)$ and $|\sigma^{n_k}(a)|$ grows as $\alpha^{-n_k}$ (recall that $\alpha^{-1}>1$ is the Perron--Frobenius eigenvalue of $M$), we have that $m_k \leq (\alpha^{-1}+\eta)^{n_k}$ for sufficiently large $k \geq 1$.
Therefore, by definition of $\rho$, 
$$
m_k^{-\rho}\geq (\alpha^{-1}+\eta)^{-n_k \rho} = (|\beta|-\eta)^{-n_k} \geq |\beta|^{-n_k}.
$$
From assumption \eqref{eq:hypothesis}, we obtain that
$$
\lim_{k\to \infty} ( |\beta|-\eta )^{-n_k} \Re( \gamma_{m_k}(\omega^{(n_k)} ) )= \lim_{k\to \infty} |\beta|^{-n_k} \Re( \gamma_{m_k}(\omega^{(n_k)} ) )= 0.
$$
In particular, from equation~\eqref{eq:teo1_2} we obtain that any limit point $y_\infty$ of $y_{n_k}$ in $\S_{a,(\bar p,\bar c, \bar s)}$ is such that
$\z_a(y_\infty)$ is an extreme point for the direction $\tau = \lim_{k \to \infty} \beta_0^{n_k}$, that is, $v_a(\tau) = \Re(\tau \z_a(y_\infty))$. Therefore, $\tau$ belongs to $\Psi_a$.

Recall that $A = \frac{|\beta|}{|\beta| - \eta} \in (1, |\beta|)$ is the constant in the definition of good direction. Amplifying equation \eqref{eq:teo1_2} by $A^{n_k}$, we find that
$$
0 \leq A^{n_k} \Re(\tau_k ( \z_a( y_{n_k} )- \z_a(x_{n_k} ) ) ) = (|\beta| - \eta)^{-n_k} \Re( \gamma_{m_k}(\omega^{(n_k)} )
$$
for all sufficiently large $k$. Hence,
\begin{equation}\label{eq:contradicted}
\lim_{k \to \infty} A^{n_k} \Re(\tau_k ( \z_a( y_{n_k} )- \z_a(x_{n_k} ) ) ) = 0.
\end{equation}

Since $x^{(n_k)}$ is minimal,
$$v_a^{(n_k)}(\tau_k) = v_{a,(p,c,s)}^{(n_k)}(\tau_k) = \Re(\tau_k \z_a (x_{n_k})).$$
We also know that $\Re(\beta_0^{n_k} \z_a (y_{n_k})) \geq v_{a,(\bar p,\bar c,\bar s)}^{(n_k)}(\beta_0^{n_k})$ and therefore that
\begin{equation} \label{eq:desig-1}
\Re(\tau_k ( \z_a( y_{n_k} )- \z_a(x_{n_k} ) ) ) \geq v_{a,(\bar p,\bar c,\bar s)}^{(n_k)}(\tau_k) - v_{a,(p,c,s)}^{(n_k)}(\tau_k) \geq 0. \end{equation} 

On the other hand, since $x_\infty\in \S_{a,(p,c,s)}$ and $y_\infty\in\S_{a,(\bar p,\bar c,\bar s)}$ are representations of extreme points for the same direction $\tau$, the unique representation property we are assuming implies
that $\z_a(x_\infty) \neq \z_a(y_\infty)$.

Using Lemma~\ref{lem:converg-exponencial}
we conclude that for each $k \geq 1$:
\begin{equation}\label{eq:min-exponential}
v_{a,(\bar p,\bar c,\bar s)}^{(n_k)}(\tau_k) - v_{a,(p,c,s)}^{(n_k)}(\tau_k) \geq v_{a,(\bar p,\bar c,\bar s)}(\tau_k) - v_{a,(p,c,s)}(\tau_k) - 2C |\beta|^{-n_k}\end{equation}
for a constant $C > 0$ which does not depend on $k$.

Finally, by \eqref{eq:desig-1}, \eqref{eq:min-exponential} and Lemma \ref{lem:central} we conclude that
\begin{equation*}
\Re(\tau_k ( \z_a( y_{n_k} )- \z_a(x_{n_k} ) ) ) \ge
D \llbracket \xi - \tau_k \rrbracket - 2C|\beta|^{-n_k} \end{equation*}
for infinitely many $k \geq 1$. Since $\gamma$ is a good eigenvector and $\xi\in\Psi_a$, by definition, $\liminf_{k\to\infty} A^{n_k} \llbracket \xi - \tau_k \rrbracket = \infty$. This contradicts \eqref{eq:contradicted}.	
\end{proof}

\begin{proof}[Proof of Corollary \ref{cor:finite min points}]
	Let $\omega \in \Omega_{\sigma}$ be a minimal sequence for the vector $\gamma=\tau\Gamma$. We have that $S^n(\omega)$ is also a minimal sequence for some $n \in \Z$ if and only if $\Re(\gamma_n(\omega)) = 0$. By Lemma \ref{lem:cota_minimos}, we have that there exists $n_0\geq1$ 
	such that $\Re(\gamma_n(\omega)) > 0$ for every $n \in \Z$ with $|n| \geq n_0$. This concludes the proof.
\end{proof}

\subsection{Proof of Theorem \ref{thm:no_conjugacy}}

\begin{proof}[Proof of Theorem \ref{thm:no_conjugacy}]
	For $t \in [0, 1)$, let $\iota(t) \in \Omega_\sigma$ be its itinerary by $T$ with respect to the partition $(I_a; a \in \A)$.
	Let $f$ be an affine i.e.m.\ with slope $\ell=(\ell_a; a\in\A)$ which is semi-conjugate to $T$. Then there exists a continuous surjective map $h \colon [0, 1) \to [0, 1)$ such that $h \circ f = T \circ h$. Let $\mu = (\iota \circ h)_*\mathrm{Leb}$ be the pushforward by $\iota \circ h$ of the Lebesgue measure on $[0, 1)$, that is, $\mu(J) = \mathrm{Leb}((\iota \circ h)^{-1}(J))$ for any Borel set $J$ of $\Omega_{\sigma}$. It is easy to see that
	$$\mu(S(J)) = \ell_a \mu(J)$$
	for every $J \subseteq \Omega_a$, where $\Omega_a = \{\omega \in \Omega_\sigma; \omega_0 = a\}$. More generally, if $w = w_{-k} \ldots w_{k'}$ and $\Omega_w = \{\omega \in \Omega_\sigma; \omega_m = w_m, -k \leq m \leq k'\}$, then if $J \subseteq \Omega_w$:
	\begin{equation}\label{eq:prop_cociclo}
	\mu(S^{k'}(J)) = \ell_{k'}(w) \mu(J) \quad \text{and}\quad \mu(S^{-k}(J)) = \ell_{-k}(w) \mu(J).
	\end{equation} 
	
	For each $a\in\A$ and $n \geq 0$, denote $r_{a,n} = |\sigma^n(a)|$ and let $\Omega_{a,n} = \Omega_{\sigma^n(a)}$. We have that $I$ is the union of the sets $S^m(\Omega_{a,n})$ with $a \in \A$ and $0 \leq m \leq r_{a,n} - 1$. Therefore,
	$$
	1 = \mu(\Omega_\sigma) \leq \sum_{a\in\A} \sum_{m=0}^{r_{a,n}-1} \mu(S^m(\Omega_{a,n})).
	$$
	
	We obtain that there exist $\delta>0$, $a\in\A$ and a subsequence $(n_k)_{k \geq 1}$ of the natural numbers such that, for every $k \geq 1$,
	\begin{equation}\label{eq: greater_than_delta}
	\sum_{m=0}^{r_{a, n_k}-1} \mu(S^m(\Omega_{a,n_k})) \geq \delta.
	\end{equation}

	Now assume by contradiction that there exists an affine i.e.m.\ $f$ with slope vector
	$\ell=\exp(-\Re(\gamma))$ which is conjugate to $T$. That is, we assume that $h$ is injective. We will show that this contradicts inequality \eqref{eq: greater_than_delta}.

	Let $(w^{(n_k)})_{k \geq 1}$ be a sequence of locally minimal words for $\sigma^{n_k}(a)$ and the vector $\gamma$, \emph{i.e.}, there exists $0\leq m_k \leq r_{a, n_k}-1$ such that	
	$$w^{(n_k)}=S^{m_k}(\sigma^{n_k}(a)) = w^{(n_k)}_{-m_k} \ldots w^{(n_k)}_{-1} . w^{(n_k)}_{0} \ldots w^{(n_k)}_{r_{a,n_k}-1-m_k}$$
	with $\Re(\gamma_n(w^{(n_k)}))\geq 0$ for every $-m_k\leq n \leq r_{a,n_k}-1-m_k$. By Lemma~\ref{lem:cota_minimos}, there exists a constant $K>0$ such that $\Sigma(w^{(n_k)}, \ell) < K$ for every $k \geq 1$. Note that $\Omega_{w^{(n_k)}}=S^{m_k}(\Omega_{a,n_k})$ and $S^m(\Omega_{a,n_k}) = S^{m-m_k} (S^{m_k}(\Omega_{a,n_k}))=S^{m-m_k}(\Omega_{w^{(n_k)}})$. Thus, by equation \eqref{eq:prop_cociclo}, 
	$$\mu(S^m(\Omega_{a,n_k})) = \ell_{m-m_k} (w^{(n_k)}) \mu(\Omega_{w^{(n_k)}}).$$
	Therefore,
	\begin{align*}
	\sum_{m=0}^{r_{a,n_k}-1} \mu(S^m(\Omega_{a,n_k})) &= \mu(\Omega_{w^{(n_k)}}) \sum_{m=0}^{r_{n_k}-1} \ell_{m-m_k}(w^{(n_k)}) \\
	&= \mu( \Omega_{w^{(n_k)}}) \Sigma(w^{(n_k)}, \ell)
	 \leq K \mu( \Omega_{w^{(n_k)}})
	\end{align*}
	for all $k \geq 1$. Finally, let $\omega \in \Omega_{\sigma}$ be a limit point of $(w^{(n_k)})_{k \geq 1}$. We have that $\liminf_{k \to \infty} \mu(\Omega_{w^{(n_k)}}) \leq \mu(\{\omega\})$. Since $h$ is invertible and $\iota$ is injective, we have that $\mu(\{\omega\}) = 0$, so
	$$\liminf_{k \to \infty} \sum_{m=0}^{r_{a,n_k}-1} \mu(S^m(\Omega_{a,n_k})) = 0,$$
	contradicting \eqref{eq: greater_than_delta}.
	
\end{proof}

\subsection{Proof of Theorem~\ref{thm:no-wandering}} We start by showing that minimal components of $H$ contain directions in $\Psi$.

\begin{lem}\label{lem:}
	If $Y$ is a minimal component of $H$, then there exists $(\xi, (q,a,r)) \in Y$ such that $\xi \in \Psi_a$.
\end{lem}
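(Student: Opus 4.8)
The goal is to show that every minimal component $Y$ of $H$ meets the ``fiber'' over some $\xi \in \Psi_a$, \emph{i.e.}, contains a point $(\xi,(q,a,r))$ with $\xi \in \Psi_a$. The natural strategy is to argue by contradiction: suppose $Y$ avoids all such fibers. Then for every point $(\tau,(q,a,r)) \in Y$ we have $\tau \notin \Psi_a$, so by Lemma~\ref{lem:psi finito} and the definition of $H$, the label $(p,c,s)$ with $H(\tau,(q,a,r)) = (\beta_0^{-1}\tau,(p,c,s))$ is uniquely determined, and moreover $H$ is continuous at every point of $Y$. I would first record this: $H|_Y$ is a genuine continuous (hence, by Theorem~\ref{thm:H_minimal}, minimal) i.e.m., and since $Y$ is compact and the discontinuity set of $H$ is finite and disjoint from $Y$, there is a uniform gap $\epsilon > 0$ with $\llbracket \pi_{\SS^1}(\hat t) - \xi \rrbracket \geq \epsilon$ for all $\hat t \in Y$ and all $\xi \in \Psi$.

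\textbf{Exploiting iteration along a pre-orbit.} The contradiction should come from Lemma~\ref{lem:intutitionH} together with the u.r.p. Pick any $(\tau,(q,a,r)) \in Y$. Using the inverse of $H$ on $Y$, set $H^{-m}(\tau,(q,a,r)) = (\beta_0^m\tau,(p_m,c_m,s_m))$ for $m \geq 0$ and let $x = (p_m,c_m,s_m)_{m\geq 1} \in \S_a$ be the corresponding forward orbit of $H$ starting at $(\tau,(q,a,r))$; by Lemma~\ref{lem:intutitionH}, $x$ represents an extreme point of $\F_a$ in direction $\tau$, so $v_a(\tau) = \Re(\tau\z_a(x))$. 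Now, since the rotation by $\beta_0^{-1}$ is irrational, the first coordinates $\{\beta_0^m\tau\}_{m\geq 0}$ are dense in $\SS^1$; because $Y$ is a \emph{single} minimal component and the pre-orbit $(\beta_0^m\tau,(p_m,c_m,s_m))_{m\geq 0}$ stays in $Y$, one can approach, within $Y$, a point whose $\SS^1$-coordinate is an arbitrary prescribed value $\xi_0 \in \SS^1$. The key point is to choose $\xi_0 \in \Psi_a$: this is possible because $\Psi$ is nonempty (by Lemma~\ref{lem:central} and the fact that labels genuinely overlap — the fractals $\F_{a,(p,c,s)}$ cover $\F_a$ and there are at least two distinct labels for some $a$, so some direction realizes a tie), and in fact every $a$ with $|\bA_a| \geq 2$ has $\Psi_a \neq \varnothing$.

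\textbf{Where the real work lies.} The delicate step is passing from ``$Y$ is $\SS^1$-dense'' to ``$Y$ contains a point lying exactly over $\xi \in \Psi_a$ with the \emph{right} label $a$.'' Denseness alone only gives points with $\SS^1$-coordinate close to $\Psi$, contradicting the uniform gap $\epsilon$ above only if we can produce a point genuinely \emph{in} a $\Psi_a$-fiber. I would handle this using the structure from Lemma~\ref{lem:finiteunion}: $Y = \hat J \cap I$ where $\hat J$ is a minimal component of the continuous extension $\hat H$ on the compactified interval $\hat I$, and $\hat J$ is a finite union of intervals of positive length, with each projection $\hat J_{(p,c,s)}$ onto $\SS^1$ closed and $\bigcup_{(p,c,s)} \hat J_{(p,c,s)} = \SS^1$ (this was established inside the proof of Lemma~\ref{lem:finiteunion}). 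So for each $a$ with $|\bA_a| \geq 2$ and each $\xi \in \Psi_a \subseteq \SS^1$, there is some label $(p,c,s)$ with $\xi \in \hat J_{(p,c,s)}$, \emph{i.e.}, a point $\hat t \in \hat J \cap \hat I_{(p,c,s)}$ with $\pi_{\SS^1}(\mathcal{e}(\hat t)) = \xi$. The remaining subtlety is to arrange that the \emph{second} coordinate $a$ of some such $\hat t$ is one for which $\xi \in \Psi_a$ — equivalently, that $(p,c,s) \in \bA_a$ for the correct $a$. One resolves this by noting that if $Y$ avoided \emph{all} $\Psi_a$-fibers, then for every $(p,c,s) \in \bA$ the closed set $\hat J_{(p,c,s)}$ would have to avoid $\Psi_a$ where $a$ is the (unique) letter with $\sigma(a) = pcs$; running over all labels and using $\bigcup \hat J_{(p,c,s)} = \SS^1$ forces some $\xi \in \Psi$ to be covered by a label whose letter does not "own" it, which contradicts the definition of $\Psi_a$ via $v_a = v_{a,(p,c,s)} = v_{a,(\bar p,\bar c,\bar s)}$ — a tie is a statement about a single letter $a$. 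Making this last counting argument airtight, i.e.\ tracking exactly which letter each label belongs to as the pre-orbit of $H$ visits the $\SS^1$-point $\xi$, is the main obstacle; everything else is routine use of minimality, irrationality of $\beta_0$, and the already-proved structural lemmas.
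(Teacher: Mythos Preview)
Your approach has a genuine gap that you yourself identify but do not close. The contradiction hypothesis says only that for every $(\tau,(q,a,r)) \in Y$ one has $\tau \notin \Psi_a$, where $a$ is the \emph{middle letter} of the label. It does \emph{not} say $\tau \notin \Psi$. So your ``uniform gap $\epsilon > 0$ from $\Psi$'' claim is already false: a point of $Y$ may well sit over some $\xi \in \Psi_b$ provided its own middle letter is not $b$. This collapses the first half of your strategy. In the second half you try to salvage things via the covering $\bigcup_{(p,c,s)} \hat J_{(p,c,s)} = \SS^1$, but the conclusion you draw (``some $\xi \in \Psi$ is covered by a label whose letter does not own it, contradicting the definition of $\Psi_a$'') is not a contradiction at all: there is nothing preventing $(\xi,(q,b,r)) \in Y$ with $\xi \in \Psi_a$ and $b \neq a$. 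You also mix up the middle letter of $(p,c,s)$ with the unique $a'$ satisfying $\sigma(a') = pcs$, which are different objects.

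The paper's argument is quite different and does not attempt to locate a specific $\xi \in \Psi_a$ inside $Y$. Instead, it exploits the absence of ties to show $Y$ is \emph{too big}: if no point of $Y$ lies over its own $\Psi_a$, then for any $(\xi,(q,a,r)) \in Y$ the forward image $H^m$ carries the maximal interval $Z = J \times \{(q,a,r)\}$ containing it rigidly to $\beta_0^{-m}J \times \{(p_m,c_m,s_m)\}$ (no label-splitting since no ties occur along the orbit). Minimality forces $H^n(Z)$ to overlap $Z$ for some $n$, hence $J \cup \beta_0^{-n}J \subseteq J$ by maximality, so $J = \SS^1$. Thus $Y$ is a finite union of full circles $\SS^1 \times \{(p_m,c_m,s_m)\}$, and the label sequence along any $H$-orbit is periodic. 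Feeding a good direction into Theorem~\ref{thm:minimal points} then produces a minimal sequence with eventually periodic prefix-suffix decomposition, contradicting Lemma~5.8 of \cite{CGM}. The key idea you are missing is this ``interval-growing'' maximality step, which converts the absence of ties into periodicity of the label cocycle.
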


\begin{proof}
	Assume that for each $(\xi, (q,a,r)) \in Y$ there exists a unique $(p,c,s) \in \bA_a$ with $v_a(\xi) = v_{a,(p,c,s)}(\xi)$. Let $(\xi, (q,a,r)) \in Y$ and let $Z \subseteq Y$ be the maximal interval containing $(\xi, (q,a,r))$. That is, $Z = J \times \{(q,a,r)\}$, where $J \subseteq \SS^1$ is the maximal interval containing $\xi$ such that $Z \subseteq Y$. We will prove that $J = \SS^1$.
	
	By  hypothesis, $H^m(Z) = \beta_0^{-m}J \times \{(p_m,c_m,s_m)\}$ for some $(p_m,c_m,s_m) \in \bA$ and each $m \geq 0$. By minimality, there exists $n \geq 1$ such that
	$H^n(Z) \cap Z \neq \varnothing$, which implies that $(p_n,c_n,s_m) = (q,a,r)$. We obtain that:
	$$
	Z \subseteq Z \cup H^n(Z) = (J \cup \beta_0^{-n}J) \times \{(q,a,r)\} \subseteq Y.
	$$
	If $J$ was a proper subset of $\SS^1$, then it would also be a proper subset of $J \cup \beta_0^{-n}J$. This contradicts the maximality of $J$. Therefore, $Z = \SS^1 \times \{(p,a,s)\}$.
	
	Now, by minimality, there exists $n \geq 0$ such that $\bigcup_{m = 0}^n H^m(Z) = Y$. Assume that $n$ is minimal with this property. By hypothesis, we deduce that:
	$$
	Y = \SS^1 \times \{(p_m,c_m,s_m); 0 \leq m \leq n\}
	$$
	and that $H(\tau, (p_m,c_m,s_m)) = (\beta_0^{-1}\tau, (p_{m+1}, c_{m+1}, s_{m+1}))$ for every $m \geq 0$ and $\tau \in \SS^1$. We conclude that the projection $\pi_{\bA}$ of any pre-orbit by $H$ is periodic. Fixing a good eigenvector $\gamma$, by Theorem \ref{thm:minimal points} we obtain that there exist ultimately periodic minimal sequences for $\gamma$. This contradicts Lemma 5.8 in \cite{CGM}.
\end{proof}

\begin{lem}
	Under the hypotheses of Theorem~\ref{thm:enter minimal}, if $\gamma=\tau\Gamma$ is a very bad eigenvector, $\omega$ is a minimal sequence for $\gamma$ and $\ell=\exp(-\Re(\gamma))$, then $\Sigma(\omega, \ell) = \infty$.
\end{lem}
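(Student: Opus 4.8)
The plan is as follows. Since $\ell=\exp(-\Re(\gamma))$, a direct computation gives $\ell_k(\omega)=\exp(-\Re(\gamma_k(\omega)))$ for every $k\in\Z$, so that
$$\Sigma(\omega,\ell)=\sum_{k\in\Z}\exp(-\Re(\gamma_k(\omega))).$$
As $\omega$ is minimal, $\Re(\gamma_k(\omega))\geq 0$ and every term is at most $1$. Hence it suffices to exhibit a constant $K>0$ together with an infinite set of integers $k$ with $\Re(\gamma_k(\omega))\leq K$, since then $\Sigma(\omega,\ell)\geq\sum e^{-K}=\infty$. I would produce such integers on the positive side; the negative side is symmetric after reversing the roles of prefixes and suffixes.

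For the setup, recall that $\Psi\neq\varnothing$ (apply Lemma~\ref{lem:} to any minimal component of $H$, which exists by Corollary~\ref{thm:H^N_minimal}). Choose $\xi\in\Psi_a$ for a letter $a\in\A$ occurring infinitely often as $c_n$ in the prefix–suffix decomposition $(p_m,c_m,s_m)_{m\geq 0}$ of $\omega$, and distinct labels $(p,c,s),(\bar p,\bar c,\bar s)\in\bA_a$ realizing $v_a(\xi)=v_{a,(p,c,s)}(\xi)=v_{a,(\bar p,\bar c,\bar s)}(\xi)$. Because $\gamma=\tau\Gamma$ is very bad, there is an increasing sequence $(n_k)_{k\geq 1}$ with $c_{n_k}=a$ along which $\beta_0^{n_k}\tau$ is extraordinarily close to $\xi$, namely $|\beta|^{n_k}\llbracket\xi-\beta_0^{n_k}\tau\rrbracket\to 0$. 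Combining Lemma~\ref{lem:converg-exponencial} (truncation error $O(|\beta|^{-n_k})$) with Lemma~\ref{lem:central} at $\xi$ (the two realizing labels stay $D_2\llbracket\xi-\beta_0^{n_k}\tau\rrbracket$–close), one obtains that, after multiplying by $|\beta|^{n_k}$, the quantities $|\beta|^{n_k}v^{(n_k)}_{a,(p,c,s)}(\beta_0^{n_k}\tau)$ and $|\beta|^{n_k}v^{(n_k)}_{a,(\bar p,\bar c,\bar s)}(\beta_0^{n_k}\tau)$ differ by at most a constant independent of $k$.

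The core step is to transfer this to $\omega$. By Corollary~\ref{cor:minimalprefix}, the label $(p_{n_k-1},c_{n_k-1},s_{n_k-1})$ realizes the truncated minimum $v_a^{(n_k)}(\beta_0^{n_k}\tau)$, and for $k$ large it must be one of the two labels above; say $(p,c,s)$, and write $\bar p=p\kappa$ with $\kappa\neq\varepsilon$. Then $\sigma^{n_k-1}(\kappa)$, of length $\asymp\alpha^{-n_k}$, appears in $\omega$ as the stretch joining the end of the $\sigma^{n_k-1}(p)$–block to the point where the competing, almost equally minimal, prefix ending in $\sigma^{n_k-1}(\bar p)$ would close. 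Along this stretch the values $\Re(\gamma_k(\omega))$ remain within a bounded amount of $-|\beta|^{n_k}v_a^{(n_k)}(\beta_0^{n_k}\tau)$, hence bounded: its two endpoints differ by $O(1)$ by the previous paragraph, the minimum of $\Re(\gamma_k(\omega))$ over the stretch is $\geq 0$ by minimality of $\omega$, and that minimum is attained near one of the endpoints because both labels realize the truncated minimum up to the $O(1)$ error. This gives an interval of integers $k$ at each scale $n_k$ with $\Re(\gamma_k(\omega))\leq K$, so infinitely many, and therefore $\Sigma(\omega,\ell)=\infty$.

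I expect the delicate point to be precisely the last claim — that $\Re(\gamma_k(\omega))$ stays bounded \emph{throughout} the branch stretch $\sigma^{n_k-1}(\kappa)$, not merely at its two endpoints, with a bound $K$ uniform in $k$. Controlling the excursion of the partial sums away from their minimum requires pinning down where inside $\sigma^{n_k-1}(\kappa)$ the minimal prefix of $\sigma^{n_k}(a)$ at the direction $\beta_0^{n_k}\tau$ lies, which I would do by a compactness argument comparing the local structure of $\omega$ near the scale $n_k$ with that of a genuine minimal sequence for the limit direction $\xi$ — for which the two-label coincidence is exact and the relevant inequalities become equalities — and then transferring the estimates back, using the very bad rate $|\beta|^{n_k}\llbracket\xi-\beta_0^{n_k}\tau\rrbracket\to 0$ to absorb the error terms. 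The bookkeeping that matches the very-bad scales with the recurring letter $a$ and its branching labels (via Corollary~\ref{cor:minimalprefix}) is the other point that needs care.
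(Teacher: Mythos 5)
Your plan correctly identifies the central mechanism (a letter $a$ with $\xi\in\Psi_a$, two competing labels whose minima almost tie along a subsequence of scales $n_k$, and the very-bad decay rate used to control the gap), but there are two genuine gaps, and the one you flag as delicate is in fact the wrong thing to try to prove.

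First, the construction of the subsequence $(n_k)$ is not merely a matter of picking a letter $a$ occurring infinitely often in the prefix--suffix decomposition and a $\xi\in\Psi_a$. You need simultaneously that $c_{n_k}=a$, that $\beta_0^{n_k}\tau\to\xi$ at the very-bad rate, \emph{and} that $(p_{n_k-1},c_{n_k-1},s_{n_k-1})$ is one of the two labels realizing $v_a(\xi)$. Nothing in the very-bad definition by itself matches the subsequence along which $\beta_0^{n}\tau$ approaches $\xi$ with the places where $c_n=a$, let alone with the label chosen by $\omega$'s decomposition. The paper gets all four conditions at once by a case split on whether the prefix--suffix decomposition eventually follows an $H$-pre-orbit: in one case the orbit lands in a minimal component of $H$ and one uses minimality of $H^{-1}$ together with Lemma~\ref{lem:} to hit a point of $\Psi_a$; in the other case the decomposition disagrees with $H$ infinitely often, and the argument from Corollary~\ref{cor:minimalprefix} plus compactness forces the limit direction into $\Psi_a$. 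You do not reproduce any version of this step.

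Second, once (i)--(iv) are in hand, you aim to show that $\Re(\gamma_k(\omega))$ stays bounded over the \emph{entire} branch stretch $\sigma^{n_k-1}(\kappa)$, and you correctly sense that this is the weak point. It is in fact both unnecessary and likely false: inside a block $\sigma^{n_k-1}(\kappa)$ the partial Birkhoff sums can attain values of order $|\beta|^{n_k-1}$, so one should not expect uniform boundedness across the whole stretch, and your appeal to the minimum being near an endpoint only controls a single point, not an interval. The paper avoids this entirely: it identifies \emph{one} position $m_k$ per scale, namely where the $x_k$- and $y_k$-prefixes of $\sigma^{n_k}(a)$ diverge, and obtains $\Re(\gamma_{m_k}(\omega))\le C_2$ (indeed $=0$ in one of the two sub-cases). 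The clean dichotomy between ``$\bar p\bar c$ is a prefix of $p$'' and ``$pc$ is a prefix of $\bar p$'' is what pins down which side of the divergence the bounded value sits on; since a single bounded term per scale already forces $\Sigma(\omega,\ell)=\infty$, no interval is needed. Your estimate combining Lemma~\ref{lem:converg-exponencial}, Lemma~\ref{lem:central} and the very-bad rate matches the paper's derivation of the $C_2|\beta|^{-n_k}$ bound, so that part of the mechanism is sound; what is missing is the matching of the subsequence to $\omega$ and the targeted (rather than interval-wide) use of the bound.
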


\begin{proof}
	Let $(p_m,c_m,s_m)_{m\geq 0}$ be the prefix-suffix decomposition of $\omega$. We claim that there exist a subsequence $(n_k)_{k\geq 1}$ and distinct labels $(p,c,s)$ and $(\bar p, \bar c, \bar s)$ in $\bA_a$ such that, for every $k \geq 1$,
	\begin{enumerate}[label=(\roman{*})]
		\item $c_{n_k} =a$;
		\item $\tau_k = \beta_0^{n_k}\tau \to \xi$ when $k \to \infty$;
		\item $(p_{n_k-1}, c_{n_k-1}, s_{n_k-1}) = (p,c,s)$;
		\item $v_a(\xi) = v_{a, (p,c,s)}(\xi) = v_{a, (\bar p, \bar c, \bar s)}(\xi)$.
	\end{enumerate}	

	From these conditions we can prove that $\Sigma(\omega,\ell)=\infty$. 
	Indeed,	let $x_k$ be an element of $\S_a$ whose first $n_k$ coordinates coincide with $(p_{n_k-m}, c_{n_k-m}, s_{n_k-m})_{1 \leq m \leq n_k}$. Clearly, $x_k \in \S_{a,(p,c,s)}$.
	
	By definition of $x_k$ and the fact that $\omega$ is minimal, we obtain from Lemma \ref{lem:minimalprefix} that $v_a^{(n_k)}(\tau_k)=v_{a,(p,c,s)}^{(n_k)}(\tau_k) =\Re(\tau_k \z_a^{(n_k)}(x_k))$. 
	
	Let $y_k \in \S_{a,(\bar p,\bar c,\bar s)}$ be such that
	$v_{a,(\bar p, \bar c, \bar s)}^{(n_k)}(\tau_k) = \Re( \tau_k \z_a(y_k))$ for every $k \geq 1$.
	Clearly, $v_a^{(n_k)}(\tau_k) \leq v_{a,(\bar p, \bar c, \bar s)}^{(n_k)}(\tau_k)$ and from Lemma~\ref{lem:converg-exponencial} and Lemma~\ref{lem:central}, there exists $C > 0$ such that, for every sufficiently large $k$,
	\begin{align*}
	0\leq v_{a, (\bar{p}, \bar{c},\bar{s})}^{(n_k)}(\tau_k) - v_{a, ({p}, {c},{s})}^{(n_k)}(\tau_k ) & \leq v_{a, (\bar{p}, \bar{c},\bar{s})}(\tau_k)-v_{a, ({p}, {c},{s})}(\tau_k ) + 2C|\beta|^{-n_k} \\
	& \leq |\beta|^{-n_k} (2 D_2|\beta|^{n_k} \llbracket \xi - \tau_k \rrbracket + 2C)
	\end{align*}
	and therefore, from the fact that $\tau$ is a very bad direction, we conclude that, for some constant $C_2 > 2C$ and every sufficiently large $k$,
	\begin{equation}\label{eq:constant_C_2}
	\Re(\tau_k\z_a(y_{k})) -\Re(\tau_k\z_a( x_{k})) = v_{a, (\bar{p}, \bar{c},\bar{s})}^{(n_k)}(\tau_k) - v_{a, ({p}, {c},{s})}^{(n_k)}(\tau_k ) \leq C_2|\beta|^{-n_k}.
	\end{equation} 
	
	If $\bar p \bar c$ is a prefix of $p$, then
	$$\sigma^{n_k-1}(p^{x_k}_1)\dots p^{x_k}_{n_k-1} = \sigma^{n_k-1}(p^{y_k}_1)\dots p^{y_k}_{n_k-1} \omega_0\omega_1\dots\omega_{m_k-1}$$
	for some increasing sequence $(m_k)_{k \geq 1}$. Therefore,
	$$\Re(\gamma_{m_k}(\omega)) = |\beta|^{n_k}\Re(\tau_k\z_a(x_{k})) -|\beta|^{n_k} \Re(\tau_k\z_a(y_{k})) \leq 0$$
	and, since $\omega$ is a minimal sequence, we obtain that $\Re(\gamma_{m_k}(\omega)) = 0$ for every $k \geq 1$. We conclude that $\ell_{m_k}(\omega) = \exp(-\Re(\gamma_{m_k}(\omega))) = 1$, so $\Sigma(\omega, \ell) = \infty$.
	
	If $pc$ is a prefix of $\bar p$, then
	$$\sigma^{n_k-1}(p^{y_k}_1)\dots p^{y_k}_{n_k-1} = \sigma^{n_k-1}(p^{x_k}_1)\dots p^{x_k}_{n_k-1} \omega_0\omega_1\dots\omega_{m_k-1}$$
	for some increasing sequence $(m_k)_{k \geq 1}$. Therefore,
	$$\Re(\gamma_{m_k}(\omega)) = |\beta|^{n_k}\Re(\tau_k\z_a(y_{k})) -|\beta|^{n_k} \Re(\tau_k\z_a(x_{k}).$$
	By \eqref{eq:constant_C_2}, we obtain that
	$$\Re(\gamma_{m_k}(\omega)) = |\beta|^{n_k}\Re(\tau_k\z_a(y_{k})) -|\beta|^{n_k} \Re(\tau_k\z_a( x_{k})) \leq C_2$$
	for all sufficiently large $k$. We conclude that $\ell_{m_k}(\omega) = \exp(-\Re(\gamma_{m_k}(\omega))) \geq \exp(-C_2)$, so $\Sigma(\omega, \ell) = \infty$.
	
	We will now prove that we can find a sequence $(n_k)_{k \geq 1}$ such that (i)-(iv) hold. We consider two complementary cases:
	
	\noindent{\bf Case 1.}
	Assume that there exists $m_0 \geq 1$ such that, for all $m \geq m_0$,
	$$ H(\beta_0^{m}\tau, (p_{m}, c_{m}, s_{m})) = (\beta_0^{m-1}\tau, (p_{m-1}, c_{m-1}, s_{m-1})).$$
	That is, up to finitely many terms, the prefix-suffix decomposition of $\omega$ is obtained by the projection $\pi_{\bA}$ of a pre-orbit by $H$. By Lemma~\ref{lem:minimalarrival} we have that $(\beta_0^{m_0}\tau, (p_{m_0}, c_{m_0}, s_{m_0}))$ belongs to a minimal component $Y$ of $H$ and, therefore, that
	$(\beta_0^{m}\tau, (p_{m}, c_{m}, s_{m}))$ belongs to $Y$ for all $m \geq m_0$.
	
	By the previous lemma, there exists $(\xi, (q, a, r)) \in Y$ such that $\xi \in \Psi_a$.
	Since $H^{-1}$ is minimal, we can find a sequence
	$(n_k)_{k\geq 1}$ such that $(\beta_0^{n_k}\tau, (p_{n_k}, c_{n_k}, s_{n_k}))$ converges to $(\xi, (q, a, r))$. We can also assume that $(p_{n_k-1}, c_{n_k-1}, s_{n_k-1}) = (p,c,s)$ is constant. We then obtain claims (i)-(iii).
	
	By definition of $H$, we have that $v_a(\tau_k) = v_{a,(p,c,s)}(\tau_k)$ and, by continuity, we obtain that $v_a(\xi) = v_{a,(p,c,s)}(\xi)$. Since $\xi \in \Psi_a$, there exists $(p,c,s) \neq (\bar p, \bar c, \bar s) \in \bA_a$ such that $v_a(\xi) = v_{a, (\bar p, \bar c, \bar s)}(\xi)$. In this way, we obtain (iv), and the claim holds in this case.
	
	\noindent{\bf Case 2.}
	Assume that there exists a subsequence $(p_{n_k}, c_{n_k}, s_{n_k})_{k\geq 1}$ such that
	$$ H(\beta_0^{n_k+1}\tau, (p_{n_k+1}, c_{n_k+1}, s_{n_k+1})) 
	\neq (\beta_0^{n_k}\tau, (p_{n_k}, c_{n_k}, s_{n_k}))$$
	for every $k \geq 1$.
	Without loss of generality, we may assume that there exists $a\in\A$ and distinct labels $(p,c,s), (\bar p, \bar c, \bar s) \in \bA_a$ such that, for all $k\geq 1$,
	\begin{enumerate}[label=(\roman{*})]
		\item $c_{n_k} =a$;
		\item $\tau_k = \beta_0^{n_k}\tau \to \xi$ when $k \to \infty$;
		\item $(p_{n_k-1}, c_{n_k-1}, s_{n_k-1}) = (p,c,s)$;
	\end{enumerate}	
	and $H(\beta_0^{n_k}\tau, (p_{n_k}, c_{n_k}, s_{n_k})) = (\beta_0^{n_k-1}\tau, (\bar p, \bar c, \bar s))$.
	
	We therefore have conditions (i)-(iii) of our claim, and we must prove that (iv) holds.

	Notice that, by continuity of $v_a$ and definition of $H$, $v_a(\xi) =v_{a, (\bar p, \bar c, \bar s)}(\xi)$. To obtain (iv), it is enough to show that $v_a(\xi)=v_{a, (p, c, s)}(\xi)$.
	
	Let $x_k \in \S_{a}$ such that its first $n_k$ coordinates are $(p_{n_k-m}, c_{n_k-m}, s_{n_k-m})_{1 \leq m \leq n_k}$. Clearly, $x_k \in \S_{a,(p,c,s)}$. By definition of $x_k$ and the fact that $\omega$ is minimal, we obtain from Corollary \ref{cor:minimalprefix} that $v_a^{(n_k)}(\tau_k)=v_{a,(p,c,s)}^{(n_k)}(\tau_k) =\Re(\tau_k \z_a^{(n_k)}(x_k))$. 
	Therefore, if $x$ is any limit point of in $\S_{a,(p,c,s)}$ of $(x_k)_{k \geq 1}$, then $v_a(\xi) =\Re(\xi \z_a({x}))$. Thus, $\xi\in\Psi_a$. We conclude that our claim also holds in the second case.
	
\end{proof}

\begin{proof}[Proof of Theorem~\ref{thm:no-wandering}]
Consider an affine extension $f$ of $T$ with slope vector $\ell = \exp(-\Re(\gamma))$. Let $h\colon I \to I$ be a continuous, surjective and non-decreasing map such that $h \circ f = T \circ h$.
We claim that if $h$ is not injective, then there exists a minimal sequence $\omega \in \Omega_{\sigma}$ for which the series $\Sigma(\omega,\ell)$ is finite. 

Indeed, let $t_0 \in I$ be such that $J_0 = h^{-1}(\{t_0\}) \subseteq I$ is an interval of positive length (that is, $J_0$ is a wandering interval for $f$). Let $\omega \in \Omega_{\sigma}$ be the itinerary of $t_0$ by $T$. Observe that, since $h$ is non-decreasing, for any $n \in \Z$:
$$
f^n(J_0) = f^n(h^{-1}(t_0)) = h^{-1}(T^n(t_0))
$$
so $f^n(J_0) \subseteq h^{-1}(I_{\omega_n})$ for every $n \in \Z$. Since, for every $a \in \A$, the slope of $f$ on the interval $h^{-1}(I_a)$ is $\ell_a$, we obtain that, for every $n \in \Z$,
$$
\mathrm{Leb}(f^n(J_0)) = \ell_n(\omega) \mathrm{Leb}(J_0),
$$
where $\mathrm{Leb}$ is the Lebesgue measure on $I$. The intervals $f^n(J_0)$ are pairwise disjoint, so
$$
\sum_{n \in \Z} \mathrm{Leb}(f^n(J_0)) = \sum_{n \in \Z} \ell_n(\omega) \mathrm{Leb}(J_0) = \Sigma(\omega, \ell) \mathrm{Leb}(J_0) \leq 1,
$$
which shows that $\Sigma(\omega, \ell) \leq \frac{1}{\mathrm{Leb}(J_0)} < \infty$. The sequence $\omega$ must then be minimal up to a shift. The proof follows by the previous lemma.
\end{proof}
\section{The cubic Arnoux--Yoccoz map} \label{sec:example}

In the cubic Arnoux--Yoccoz i.e.m.\ (A-Y i.e.m.)\ we illustrate the main theorems of the article. In particular, we construct the map $H$ together with its minimal components. We have to mention that this example is not really self-similar in the sense of this article, but the natural symbolic coding is substitutive and the substitution satisfies the conditions of this article. In any case, it can be transformed in such a way that the resulting i.e.m.\ fully satisfies our conditions, but the extra notation is unnecessary to understand the phenomenon. Details on this transformation can be found in \cite{cubicAY}. 

Let $\alpha$ be the unique real number such that $\alpha + \alpha^2 + \alpha^3 = 1$ and let $G_{t_0,t_1}$ be the map exchanging both halves of the interval $[t_0, t_1)$ while preserving orientation. That is,
$$
G_{t_0, t_1}(t) = \begin{cases}
	t + (t_0 + t_1)/2 & t \in [t_0, (t_0+t_1)/2), \\
	t - (t_0 + t_1)/2 & t \in [(t_0+t_1)/2, t_1), \\
	t & t \notin [t_0, t_1).
\end{cases}
$$
Then, the A-Y i.e.m.\ is given by $T = G_{0,1} \circ G_{0,\alpha} \circ G_{\alpha, \alpha+\alpha^2} \circ G_{\alpha+\alpha^2,1}$. Properties of $T$ were extensively discussed in \cite{geometricalmodels}. In particular, it is proved that the map $T$ is equal, up to rescaling and rotation, to the map induced on the interval $[0, \alpha)$ and, by considering an appropriate refinement of continuity intervals of $T$ into nine intervals, one may encode the relation of orbits by $T$ for this partition and the orbits of the induced system for the induced partition by the following substitution $\sigma$ on the alphabet $\A = \{1, \ldots, 9\}$:
$$\arraycolsep=1.5pt
	\begin{array}{lllllllll}
		\sigma(1) & = & 35 &\qquad \sigma(4) & = & 17 &\qquad \sigma(7) & = & 29 \\
		\sigma(2) & = & 45 &\qquad \sigma(5) & = & 18 &\qquad \sigma(8) & = & 2 \\
		\sigma(3) & = & 46 &\qquad \sigma(6) & = & 19 &\qquad \sigma(9) & = & 3 \\
	\end{array}
$$
One then has that $\Omega_T = \Omega_\sigma$. It is easy to check that $\sigma$ is primitive.
The characteristic polynomial of $M$ is $(1-t^3)(t^3+t^2+t-1)(-t^3+t^2+t+1)$, where the last two factors are irreducible. The roots of $t^3+t^2+t-1$ are $\alpha$, $\beta$ and $\bar{\beta}$, whereas the roots of $-t^3+t^2+t+1$ are $\alpha^{-1}$, $\beta^{-1}$ and $\bar{\beta}^{-1}$, where $\alpha^{-1}$ is the Perron--Frobenius eigenvalue. We assume that $\beta$ is the eigenvalue with positive imaginary part. Numerically, 
$\beta \approx -0.771845 + 1.11514i$. It is proved in \cite{messaoudi} that $(\beta^{-1})^n$ is never real for any $n \in \Z$. Furthermore, we have that the eigenspace associated with $\beta$ has dimension 1. In fact, it is generated by 
$$\Gamma =
(\beta^2+\beta+1, -\beta, -\beta, -\beta^2-\beta-1, \beta+1, \beta + 1, -\beta^2-\beta-2, -1, -1).$$

In what follows $\beta$ and $\Gamma$ are the corresponding eigenvalue and eigenvector of $M$ used in previous sections. By Lemma 8.8 in \cite{CGM} we have that this example satisfies the u.r.p.\ for the selected $\beta$ and $\Gamma$. Also, the boundaries of the associated fractals 
are Jordan curves (see Lemma 8.5 and Corollary 8.7 in \cite{CGM}).

First we compute the sets $\Psi_a$ where we can find extreme points in different sub-fractals. 
Since $\F_2 = \F_3$, $\F_5 = \F_6$ and $\F_8 = \F_9$ (proved in \cite{geometricalmodels}), we focus on the subalphabet $\{1,2,4,5,7,8\}$.

\begin{lem}
One has
$$\arraycolsep=1.5pt
\begin{array}{llllll}
	\Psi_1 & = & \{ -i \beta_0^4, \eta_1 \} &\qquad \Psi_5 & = & \{ i \beta_0^5, -\beta_0 \eta_1 \} \\
	\Psi_2 & = & \{i \beta_0^2, -i \beta_0^2\} &\qquad \Psi_7 & = & \{ i \beta_0^5, \eta_7 \} \\
	\Psi_4 & = & \left\{ i \beta_0^5, i \frac{ |\beta^{-1} + \beta^{-3}| }{ \beta^{-1} + \beta^{-3} } \right \} &\qquad \Psi_8 & = & \varnothing
\end{array},
$$
where $\phi - \frac{\pi}{2} < \arg(\eta_1) < 2\phi - \frac{3\pi}{2}$ and $4\phi - \frac{9\pi}{2} <\arg(\eta_7) < 2\pi$, $\phi = \arg(\beta_0) \in [0, 2\pi)$. 
\end{lem}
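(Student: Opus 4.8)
The plan is to reduce the computation of each $\Psi_a$ to a single scalar equation on $\SS^1$ using the self-similarity of the fractals, and then to solve that equation by feeding in the explicit description of the six Arnoux--Yoccoz fractals from \cite{geometricalmodels} and \cite{CGM}.

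First I would dispose of the easy cases. Since $\sigma(8)=2$ and $\sigma(9)=3$ are single letters, $\bA_8$ and $\bA_9$ are singletons, so $\Psi_8=\Psi_9=\varnothing$. For every other $a$ the word $\sigma(a)=uv$ has length two, hence $\bA_a=\{(\varepsilon,u,v),(u,v,\varepsilon)\}$ and $\Psi_a=\{\tau\in\SS^1;\ v_{a,(\varepsilon,u,v)}(\tau)=v_{a,(u,v,\varepsilon)}(\tau)\}$. From the definitions of $\S_{a,(p,c,s)}$ and $\z_a$ (and $\Gamma(\varepsilon)=0$) one reads off $\F_{a,(\varepsilon,u,v)}=\beta^{-1}\F_u$ and $\F_{a,(u,v,\varepsilon)}=\beta^{-1}(\Gamma_u+\F_v)$, where $\Gamma_u$ is the $u$-th coordinate of $\Gamma$. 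Taking support functions and using $\beta^{-1}=|\beta|^{-1}\beta_0^{-1}$ gives $v_{a,(\varepsilon,u,v)}(\tau)=|\beta|^{-1}v_u(\beta_0^{-1}\tau)$ and $v_{a,(u,v,\varepsilon)}(\tau)=|\beta|^{-1}\bigl(\Re(\beta_0^{-1}\tau\,\Gamma_u)+v_v(\beta_0^{-1}\tau)\bigr)$, so that, writing $\nu=\beta_0^{-1}\tau$,
\[
\tau\in\Psi_a\quad\Longleftrightarrow\quad v_u(\nu)-v_v(\nu)=\Re(\nu\,\Gamma_u).\qquad(\star)
\]
Because $\F_2=\F_3$, $\F_5=\F_6$, $\F_8=\F_9$ and $\Gamma_2=\Gamma_3$, $\Gamma_5=\Gamma_6$, $\Gamma_8=\Gamma_9$, equation $(\star)$ for $a\in\{3,6,9\}$ is identical to the one for $a\in\{2,5,8\}$, which is why it suffices to treat the subalphabet $\{1,2,4,5,7,8\}$, with $\Psi_8=\varnothing$ already known.

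Next I would solve $(\star)$ for $a\in\{1,2,4,5,7\}$, iterating it and using the explicit self-similar structure of the six fractals, whose boundaries are Jordan curves by Lemma 8.5 and Corollary 8.7 of \cite{CGM}. The membership $\tau\in\Psi_a$ is equivalent to the existence of two distinct representations $x,x'\in\S_a$ of extreme points of $\F_a$ for the \emph{same} direction $\tau$ that differ already at the first label; by the u.r.p.\ the two extreme points are distinct, so $\tau$ is orthogonal to $\z_a(x)-\z_a(x')$. By the continuation property (Lemma~\ref{lem:cont-prop}) and Lemma~\ref{lem:intutitionH}, each of $x$ and $x'$ is obtained by following, along the sequence $(\beta_0^{-m}\tau)_{m\geq 0}$, the canonical extremal branch of the subdivision, so there are only finitely many candidate pairs, which I would enumerate one by one. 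The entries $\pm i\beta_0^{k}$ and $i\,|\beta^{-1}+\beta^{-3}|/(\beta^{-1}+\beta^{-3})$ will come from the branches that eventually close up periodically (reflecting symmetries of the sub-fractals), whereas $\eta_1$ and $\eta_7$ will come from the remaining, non-periodic branches; for those one cannot expect a closed form, and I would instead prove existence and uniqueness of a solution inside the indicated arc using the continuity and one-sided differentiability of $v_a$ (Lemma~\ref{lem:continuity of v_a}) together with the local Lipschitz lower bounds of Lemma~\ref{lem:central}, which force the zeros of $\nu\mapsto v_u(\nu)-v_v(\nu)-\Re(\nu\,\Gamma_u)$ to be isolated and the sign to change at each of them; the arcs themselves, whose endpoints are among the already-identified ``nice'' directions, give the stated bounds for $\arg(\eta_1)$ and $\arg(\eta_7)$. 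The detailed enumeration of branches, the verification that no other direction satisfies $(\star)$, and the numerical localisation of $\arg(\eta_1),\arg(\eta_7)$ would be carried out in the Appendix.

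I expect the last step to be the main obstacle. Equation $(\star)$ is a genuinely one-dimensional problem and $\Psi_a$ is a priori finite by Lemma~\ref{lem:psi finito}, but the functions $v_a$ are support functions of fractals with nowhere-smooth boundaries, so they cannot be differentiated and the equation cannot be solved in closed form. The two delicate points are (i) showing that the extreme-point representations entering $(\star)$ are exactly those following the canonical self-similar branches---this is what keeps the list of candidate directions finite---and (ii) proving that each non-periodic branch yields precisely one direction and controlling its argument, which requires tracking the combinatorics of the iterates $\beta_0^{-m}\tau$ carefully and combining the quantitative estimates of Lemmas~\ref{lem:central} and~\ref{lem:continuity of v_a}.
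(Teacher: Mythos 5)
Your reduction of the membership condition $\tau \in \Psi_a$ to the scalar equation $(\star)$, namely $v_u(\nu) - v_v(\nu) = \Re(\nu\,\Gamma_u)$ with $\nu = \beta_0^{-1}\tau$, is correct and is a cleaner algebraic reformulation than what appears in the paper's Appendix; it correctly uses $\F_{a,(\varepsilon,u,v)} = \beta^{-1}\F_u$ and $\F_{a,(u,v,\varepsilon)} = \beta^{-1}(\Gamma_u + \F_v)$ and the symmetry collapsing $\{3,6,9\}$ onto $\{2,5,8\}$. The paper instead works directly with the comparison $v_{a,(p,c,s)}^{(n)}(\tau)$ versus $v_{a,(\bar p,\bar c,\bar s)}^{(n)}(\tau)$, using an explicit uniform error constant $C = |\beta|/(|\beta|-1)$ so that a finite-step numerical inequality certifies which sub-fractal attains the minimum; this is then combined with a binary search to sandwich each element of $\Psi_a$ between computable bounds. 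Both approaches share the same computational backbone.

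The genuine gap in your proposal is the counting step, i.e., establishing that $|\Psi_a|$ is \emph{exactly} two, so that the two directions you localize numerically exhaust $\Psi_a$. Lemma~\ref{lem:psi finito} gives finiteness, and Lemma~\ref{lem:central} gives a two-sided Lipschitz bound on $|v_{a,(p,c,s)} - v_{a,(\bar p,\bar c,\bar s)}|$ near any point of $\Psi_a$, but this does \emph{not} force a sign change of $g(\nu) = v_u(\nu) - v_v(\nu) - \Re(\nu\,\Gamma_u)$ at each zero (a V-shaped cusp satisfies the Lipschitz bound without changing sign), and even if it did, a continuous function on $\SS^1$ with sign changes at every zero only has an \emph{even} number of zeros, not necessarily two. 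The paper closes this gap with a purely topological argument (its Appendix lemma): each $\F_{a,(p,c,s)}$ is the closure of a Jordan domain, and three distinct directions of $\Psi_a$ would produce three pairs of distinct extreme points lying on the three supporting lines forming a triangle $\Delta \supseteq \F_a$; a curve inside $\F_{a,(\varepsilon,b,c)}$ joining its three extreme points and otherwise interior then disconnects $\Delta$ in a way that contradicts the fact that $\F_{a,(b,c,\varepsilon)}$ must meet all three sides. Your vaguer appeal to ``canonical branches'' and sign changes does not replace this; you should either adopt the Jordan-curve separation argument or supply a different rigorous bound on the number of solutions of $(\star)$.
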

\begin{proof}
The bounds on $\eta_1$ and $\eta_7$ were found computationally and then proved analytically. 
The proof is tedious but elementary, so we omit it here. See the Appendix for computations. 
\end{proof}

Using the previous lemma it is possible to compute the right-continuous map $H$ somewhat explicitly (it will depend on the bounds for $\eta_1$ and $\eta_7$). 
The bounds on $\eta_1$ and $\eta_7$ are sufficiently good so that $H$ restricted to its minimal components does not depend on their exact values.

\begin{figure}
\centering
\includegraphics[scale=0.30]{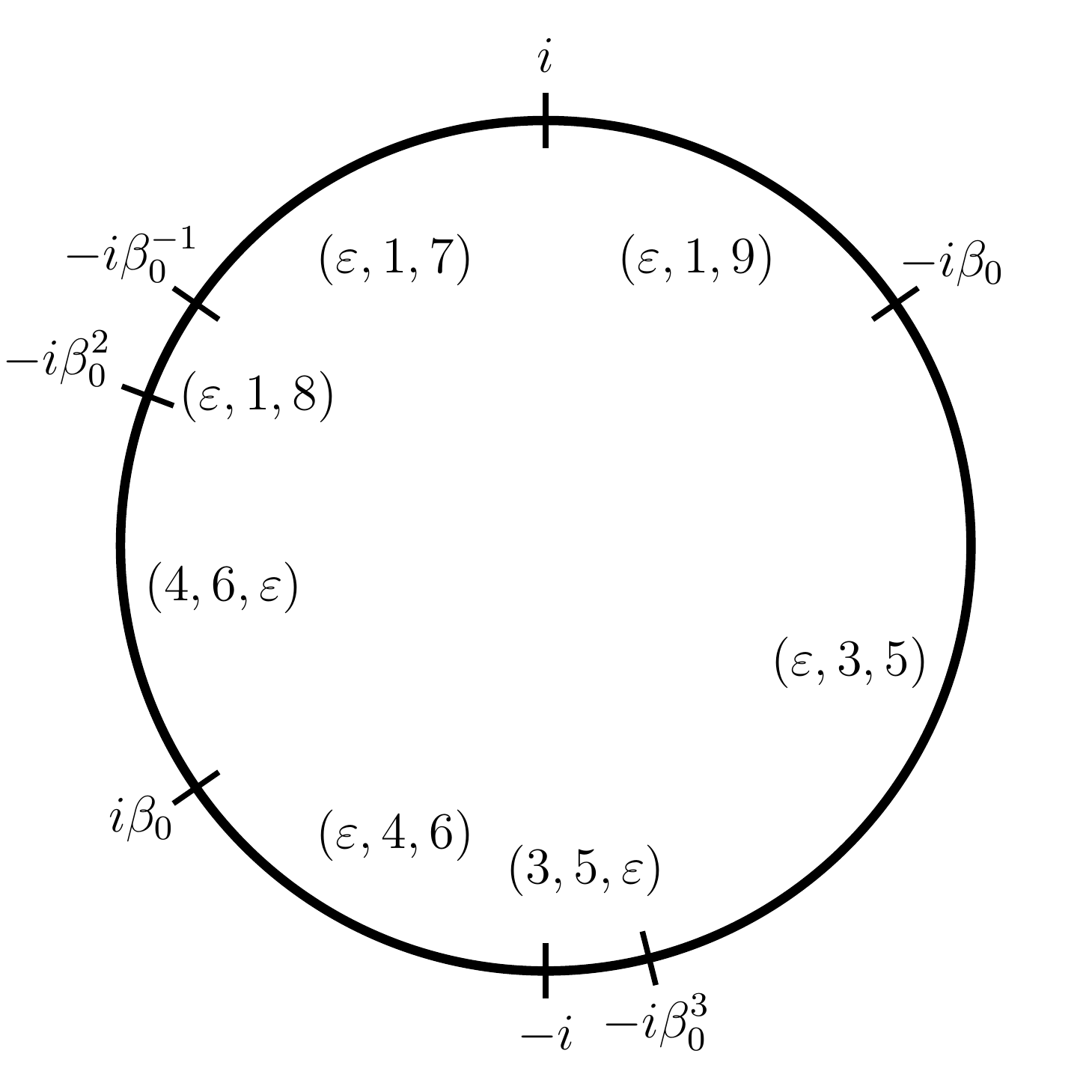} \includegraphics[scale=0.30]{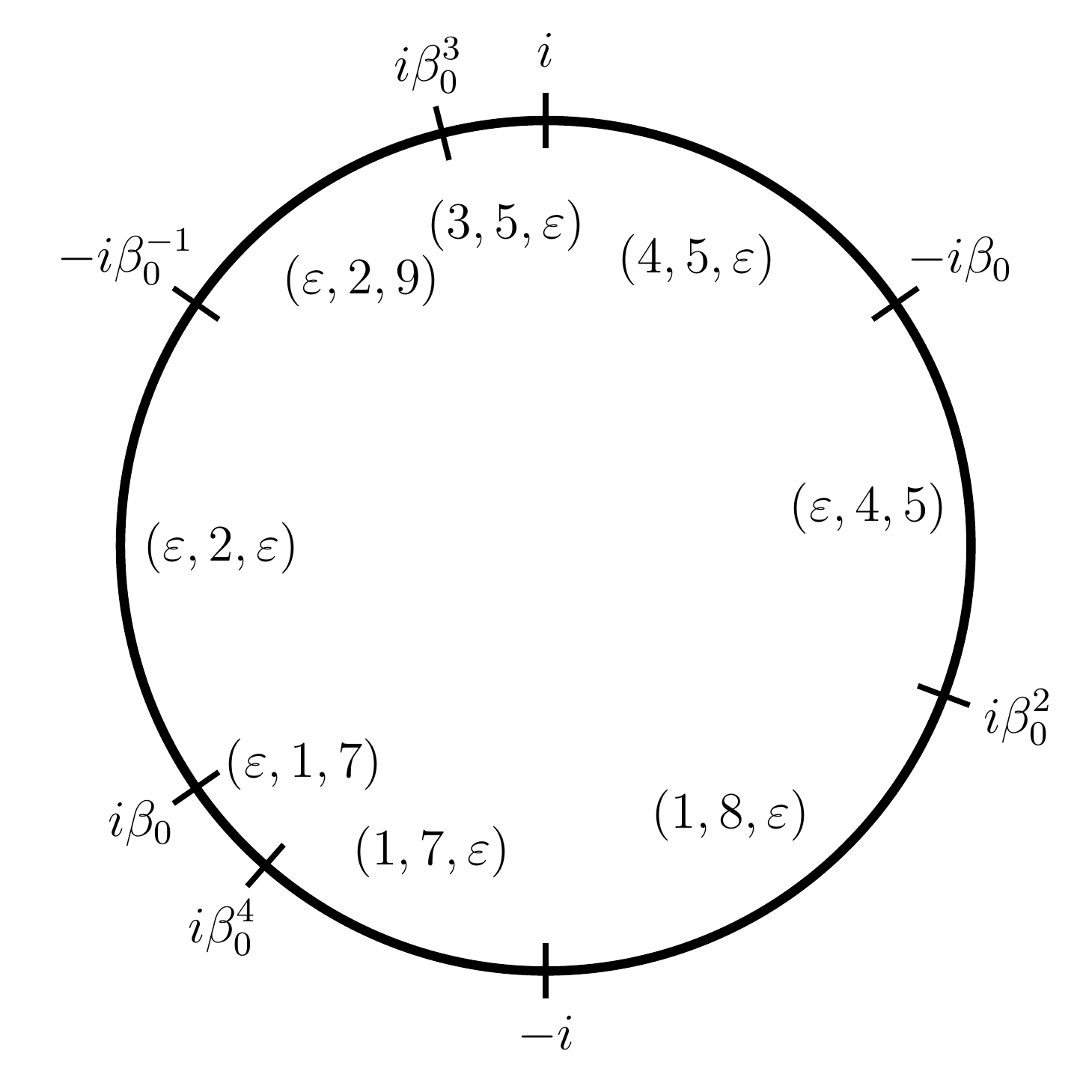}
\caption{Each circle represents a minimal component for the map $H$ associated with the cubic Arnoux--Yoccoz map. The map $H$ acts as a rotation by $\beta_0^{-1}$ in each circle and the labels in $\bA$ change in accordance with the partitions of each circle.}
\label{fig:minimalcomponents}
\end{figure}

\begin{lem}
The map $H$ has exactly two minimal components shown in Figure \ref{fig:minimalcomponents}. 
\end{lem}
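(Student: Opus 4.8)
The plan is to make the right-continuous map $H$ completely explicit on all sixteen circles $\SS^1\times\{(q,a,r)\}$, $(q,a,r)\in\bA$, using the description of the sets $\Psi_a$ in the previous lemma, and then to read off its minimal components. First I would record, for each $a\in\A$, the partition of $\SS^1$ into the intervals $J_{a,(p,c,s)}=\{\tau\in\SS^1:v_a(\tau)=v_{a,(p,c,s)}(\tau)\}$ with $(p,c,s)\in\bA_a$: by Lemma~\ref{lem:itm} these are (left-closed, right-open) intervals whose endpoints lie in $\Psi_a$, so their combinatorics is entirely determined by the previous lemma, and on the parts of $X$ where the letters $1$ or $7$ occur by the stated bounds on $\arg(\eta_1)$ and $\arg(\eta_7)$ — which, as already remarked, are sharp enough to fix the behaviour of $H$ on the arcs that matter. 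The identifications $\F_2=\F_3$, $\F_5=\F_6$ and $\F_8=\F_9$ reduce the computation of the $v_a$ and $\Psi_a$ to the subalphabet $\{1,2,4,5,7,8\}$. The outcome is an explicit formula: $H$ rotates the first coordinate by $\beta_0^{-1}$ and relabels $(q,a,r)\mapsto(p,c,s)$ according to which $J_{a,(p,c,s)}$ contains $\tau$.

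Next I would exhibit the two candidate components $Y_1,Y_2$ drawn in Figure~\ref{fig:minimalcomponents}. Each is a finite union $Y_i=\bigcup_j K_{i,j}\times\{(p^{i,j},c^{i,j},s^{i,j})\}$ of arcs $K_{i,j}\subseteq\SS^1$, chosen so that the projection $\pi_{\SS^1}$ maps $Y_i$ bijectively onto $\SS^1$ away from finitely many points; equivalently, as $\tau$ runs once around $\SS^1$ the label attached to it inside $Y_i$ changes only at finitely many prescribed directions. Using the explicit form of $H$, I would verify the equality $H(Y_i)=Y_i$; since $\pi_{\SS^1}$ then conjugates $H|_{Y_i}$, up to finitely many points, to the irrational rotation $\tau\mapsto\beta_0^{-1}\tau$, the restriction of $H$ to $Y_i$ is minimal, and passing to the completed map $\hat H$ of Section~\ref{sec:the map H} one concludes that $Y_i$ is a minimal component of $H$. (Alternatively, $Y_i$ can be presented as the closure of a single forward $H$-orbit.) As $Y_1$ and $Y_2$ carry different labels they are distinct, so $H$ has at least two minimal components.

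To see there are no others, I would invoke the lemma asserting that every minimal component $Y$ of $H$ contains a pair $(\xi,(q,a,r))$ with $\xi\in\Psi_a$. The set of such pairs is finite and completely known from the previous lemma, so it remains to chase finitely many orbits: for each admissible $(\xi,(q,a,r))$ with $\xi\in\Psi_a$ I would compute $\hat H^{N}$ of the corresponding point — working with $\hat H$ on $\hat I$ to resolve the ambiguity at $\xi$, where $N$ is the uniform constant of Lemma~\ref{lem:minimalarrival} — and check that the result lies in $Y_1$ or in $Y_2$. Since distinct minimal components are disjoint and each is the unique minimal component contained in the closed orbit of any of its points, any minimal component containing such a seed must coincide with $Y_1$ or $Y_2$. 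Hence $Y_1$ and $Y_2$ are all the minimal components of $H$, so $\Lambda_H=Y_1\sqcup Y_2$ and $H$ has exactly two minimal components, as drawn in Figure~\ref{fig:minimalcomponents}.

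I expect the main obstacle to be purely computational: correctly writing down $H$ on all sixteen circles from the $\Psi_a$ data, verifying the equalities $H(Y_i)=Y_i$, and carrying out the finite orbit chase seeded at the $\Psi$-directions. None of these steps is deep, but they are intricate, and this is exactly where the precise (numerically found, analytically confirmed) bounds on $\arg(\eta_1)$ and $\arg(\eta_7)$ enter, to pin down the combinatorics of $H$ on the relevant arcs. The explicit tables and verifications are deferred to the Appendix.
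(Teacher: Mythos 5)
Your proposal is correct but takes a genuinely different route from the paper's. The paper proves the lemma in one line: it computes that $H^{30}(\SS^1\times\bA)=H^{31}(\SS^1\times\bA)$, deduces from Corollary~\ref{thm:H^N_minimal} that $\Lambda_H=H^{30}(\SS^1\times\bA)$ is exactly the disjoint union of minimal components, and reads off the two circles of Figure~\ref{fig:minimalcomponents}. You instead first exhibit the candidate components $Y_1,Y_2$, verify $H(Y_i)=Y_i$, prove minimality by conjugating $H|_{Y_i}$ to the rotation $\tau\mapsto\beta_0^{-1}\tau$, and then exclude any further component by using the lemma (from Section~\ref{sec:proofs}) that every minimal component contains a pair $(\xi,(q,a,r))$ with $\xi\in\Psi_a$, chasing the finitely many $\Psi$-seeds forward by $\hat H^N$. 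Both arguments ultimately rest on the same kind of finite computation, but they buy different things. Your approach gives a conceptual reason the chase terminates (every component must contain a $\Psi$-seed) and avoids guessing the magic number $30$; the paper's approach, by iterating the entire space, automatically tracks whole arcs and so sidesteps the bookkeeping that your seed-based chase requires around $\eta_1$ and $\eta_7$, which are known only up to bounds — to chase those seeds correctly you must propagate small arcs rather than points and confirm the stated bounds stay sharp throughout, which you acknowledge but which is a nontrivial addition. Also note that "as $Y_1$ and $Y_2$ carry different labels they are distinct" is informal; what you mean is that some $\tau$ receives different labels in $Y_1$ and $Y_2$, which indeed follows from the figure.
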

\begin{proof}
By iterating $H$, we see that $H^{30}(\SS^1\times\bA) = H^{31}(\SS^1\times\bA)$, so the limit set $\Lambda_H = \bigcap_{n\geq 0} H^n(\SS^1\times\bA)= H^{30}(\SS^1\times\bA)$. In addition, the restriction of $H$ to $\Lambda_H$ coincides with the map in Figure \ref{fig:minimalcomponents}. 
\end{proof}

\begin{lem}
Let $A > 1$ be a real number and let $\tau, \xi \in \SS^1$ be algebraic. Then, $\liminf_{n \to \infty} A^n \llbracket \xi - \beta_0^n \tau \rrbracket = \infty$.
\end{lem}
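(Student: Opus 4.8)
The plan is to reduce the statement to an arithmetic estimate on linear forms in logarithms and then invoke Baker's theorem. First I would record that $\beta_0$ is itself algebraic: $\beta$ is an algebraic integer (an eigenvalue of the integer matrix $M$) and $\bar\beta$ is one of its Galois conjugates, so $|\beta|^2=\beta\bar\beta$ is algebraic, whence $|\beta|$ and $\beta_0=\beta/|\beta|$ are algebraic. Writing $\beta_0=e^{i\phi_0}$, $\tau=e^{i\psi}$, $\xi=e^{i\varphi}$ with real arguments, and letting $\|t\|$ denote the distance of $t\in\R$ to $2\pi\Z$, the quantity $\llbracket\xi-\beta_0^n\tau\rrbracket$ is, up to universal constants, comparable to $\|n\phi_0+\psi-\varphi\|$ whenever the latter is, say, at most $1$, and is bounded below by a positive constant otherwise. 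Since $A>1$, the terms $A^n\llbracket\xi-\beta_0^n\tau\rrbracket$ with $\llbracket\xi-\beta_0^n\tau\rrbracket$ bounded below already tend to infinity, so it suffices to prove $A^n\|n\phi_0+\psi-\varphi\|\to\infty$.

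Next I would write $\|n\phi_0+\psi-\varphi\|$ as the modulus of a linear form in logarithms. Choosing the determinations $\log\beta_0=i\phi_0$, $\log(\tau/\xi)=i(\psi-\varphi)$, $\log(-1)=i\pi$, and letting $k_n\in\Z$ be a nearest integer to $(n\phi_0+\psi-\varphi)/(2\pi)$, one gets
$$\Lambda_n := n\log\beta_0+\log(\tau/\xi)-2k_n\log(-1)=i\bigl(n\phi_0+\psi-\varphi-2\pi k_n\bigr),$$
so that $|\Lambda_n|=\|n\phi_0+\psi-\varphi\|$ and $|k_n|\le n|\phi_0|/(2\pi)+1=O(n)$. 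The form $\Lambda_n$ vanishes exactly when $\beta_0^n\,\tau/\xi=1$, i.e.\ when $\beta_0^n\tau=\xi$; since $\beta_0$ is not a root of unity this can hold for at most one $n$, which is irrelevant to the $\liminf$. For every other $n$, $\Lambda_n$ is a nonzero linear form, with rational integer coefficients $n$, $1$, $-2k_n$ all of size $O(n)$, in the logarithms of the fixed nonzero algebraic numbers $\beta_0$, $\tau/\xi$, $-1$.

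The final step is Baker's theorem on lower bounds for nonvanishing linear forms in logarithms of algebraic numbers: with $\beta_0,\tau/\xi,-1$ fixed and only the integer coefficients growing, it yields effectively computable constants $c>0$ and $\kappa>0$, depending only on $\beta_0$, $\tau$, $\xi$, such that $|\Lambda_n|\ge c\,n^{-\kappa}$ for all large $n$ with $\Lambda_n\ne 0$. Hence $A^n\|n\phi_0+\psi-\varphi\|\ge c\,A^n n^{-\kappa}\to\infty$, which completes the argument.

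The only point requiring genuine care is the bookkeeping: checking that $\beta_0$, $\tau/\xi$, $-1$ are honestly algebraic and that the auxiliary integer $k_n$ stays $O(n)$, so that Baker's bound is polynomial in $n$ and therefore crushed by $A^n$ for \emph{every} $A>1$. This last point is exactly why the full strength of Baker's theorem is needed: a cruder Liouville-type inequality only gives $\llbracket\xi-\beta_0^n\tau\rrbracket\ge C^{-n}$ for some $C$ depending on the heights of the numbers involved, which is useless when $A$ is small, whereas Baker's estimate degrades only polynomially in the size of the coefficients.
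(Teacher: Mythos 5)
Your proof is correct and follows essentially the same route as the paper: both reduce $\llbracket \xi - \beta_0^n\tau\rrbracket$ to a linear form in logarithms of fixed algebraic numbers with integer coefficients of size $O(n)$ and invoke Baker's theorem to get a lower bound that is only polynomially small in $n$, hence crushed by $A^n$. The one cosmetic difference is that by using the nonvanishing form of Baker's inequality (observing $\Lambda_n=0$ for at most one $n$ since $\beta_0$ is not a root of unity) you sidestep the paper's three-case analysis on whether $\xi\tau^{-1}$ is a root of unity or a rational power of $\beta_0$, which the paper needs only to arrange the linear-independence hypothesis.
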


\begin{proof}
We will use Baker's theorem, which relies on the following definition: given an algebraic number whose minimal primitive polynomial is $p(t) = \sum_{m = 0}^d p_m t^m$, we define its \emph{height} as $\max_{0 \leq m \leq d} |p_m|$.

Let $n \geq 0$. First observe that $\llbracket \xi - \beta_0^n \tau \rrbracket = \llbracket \xi\tau^{-1} - \beta_0^n\rrbracket$ and that $\xi\tau^{-1}$ is algebraic. 
Let $\phi, \theta \in [0, 2\pi]$ such that $\exp(i\phi) = \beta_0$ and $\exp(i\theta) = \xi\tau^{-1}$. We have that $2\pi$ and $\phi$ are linearly independent over the rational numbers, since $\beta_0$ is not a root of unity. Moreover,
$$
\llbracket \xi - \beta_0^n\tau \rrbracket = \llbracket \xi\tau^{-1} - \beta_0^n\rrbracket = \min_{m \in \Z} |2m\pi + \theta - n\phi|
$$
and, since $\theta, \phi \in [0, 2\pi]$, the number $m \in \Z$ attaining the minimum in the previous equation has absolute value at most $n$. Therefore,
$$\llbracket \xi - \beta_0^n\tau \rrbracket = \min_{-n \leq m \leq n} |2m\pi + \theta - n\phi|.$$
We consider several cases:

If $\xi\tau^{-1}$ is neither a root of unity nor a rational power of $\beta_0$, then $2\pi$, $\phi$ and $\theta$ are linearly independent over the rational numbers. The height of $m$ is $|m| \leq n$. By Baker's theorem, we obtain that $|2m\pi + \theta - n\phi| \geq n^{-C}$ for every $-m \leq m \leq n$, where $C > 0$ is a constant independent of $n$. Therefore, $\llbracket \xi - \beta_0^n \tau \rrbracket \geq n^{-C}$.

If $\xi\tau^{-1}$ is a root of unity, then $\theta = 2 q\pi$ with $q$ a rational number. Therefore,
$$\llbracket \xi - \beta_0^n\tau \rrbracket = \min_{-n \leq m \leq n} |2(m + q)\pi - n\phi|.$$
If $n$ is larger than both the numerator and denominator of $q$, then the height of $m + q$ most $n^2 + n$ for every $-n \leq m \leq n$. Baker's theorem then shows that $|2(m + q)\pi - n\phi| \geq (n^2 + n)^{-C}$ for every $-n \leq m \leq m$ with $m \neq -q$, where $C > 0$ is a constant independent of $n$. Observe that the minimum cannot be attained at $m = -q$ except for finitely many $n$. Therefore, $\llbracket \xi - \beta_0^n \tau \rrbracket \geq (n^2 + n)^{-C}$ for sufficiently large $n$.

If $\xi\tau^{-1}$ is a rational power of $\beta_0$, then $\theta = q\phi$ for some rational number $q$. Therefore,
$$\llbracket \xi - \beta_0^n\tau \rrbracket = \min_{-n \leq m \leq n} |2m\pi + (q - n)\phi|.$$
If $n$ is larger than the numerator and denominator of $q$, then the height of $q - n$ is at most $n^2 + n$. By Baker's theorem, we obtain that $|2m\pi + (q - n)\phi| \geq (n^2 + n)^{-C}$ for every $-n \leq m \leq m$ and $n \neq q$, where $C > 0$ is a constant independent of $n$. Therefore, $\llbracket \xi - \beta_0^n \tau \rrbracket \geq (n^2 + n)^{-C}$ for sufficiently large $n$.

In any case, for $n$ sufficiently large, $\llbracket \xi - \beta_0^n\tau \rrbracket$ is bounded from below by $p(n)^{-C}$, where $p$ is a polynomial and $C > 0$ is a constant independent of $n$. This fact rules out the exponential rate of convergence.

\end{proof}

\section{Possible additional examples}

In this section we present a class of i.e.m.'s for which we think it is possible to find new examples verifying the hypotheses of our main results. These hypotheses are: the existence of a suitable eigenvalue, that is, a non-real expanding eigenvalue $\beta$ such that $\beta/|\beta|$ is not a root of unity, and the unique representation property. The family of examples satisfies the first hypothesis. Nevertheless, to determine if the unique representation property holds for a specific example in this class one needs to understand the topology of the associated fractals. We expect that algebraic conditions similar to the ones in \cite{persistence} are sufficient. Indeed, conditions of this nature should imply that the fractals are ``well-behaved'' in a broad sense since this is true for classical Rauzy fractals.

We will make use of the notion of Rauzy--Veech algorithm and related concepts such as Rauzy classes. For more details on these notions we suggest \cite{viana} and \cite{yoccoz-pisa}.

\subsection{Suitable eigenvalue hypothesis} We will restrict the discussion to i.e.m.'s which are periodic for the Rauzy--Veech algorithm. This is a natural class of self-similar i.e.m.'s as explained in \cite[Section 7.2]{CGM}.

First observe that i.e.m.'s exchanging five intervals or less cannot satisfy this hypothesis. Indeed, any reciprocal quintic polynomial of a primitive matrix has at least three real roots. If the remaining roots $\beta$, $\beta^{-1}$ are non-real, then $|\beta| = 1$ as they are complex conjugates.

However, it is possible to construct an infinite family of self-similar i.e.m.'s exchanging six intervals whose induction matrices have suitable eigenvalues by finding appropriate cycles in a Rauzy class as done in \cite[Section 6]{persistence}. Indeed, consider the hyperelliptic permutation
\[
	\pi = \begin{pmatrix}
		1 & 2 & 3 & 4 & 5 & 6 \\ 6 & 5 & 4 & 3 & 2 & 1
	\end{pmatrix}.
\]
We consider three cycles on the Rauzy class of $\pi$ (see Figure \ref{fig:hyperelliptic_rauzy}):
\begin{enumerate}
	\item alternating top and bottom operations until coming back to $\pi$;
	\item alternating bottom and top operations until coming back to $\pi$;
	\item three bottom operations, followed by $2n$ top operations and two more bottom operations, for an integer $n \geq 0$.
\end{enumerate}

\begin{figure} \label{fig:hyperelliptic_rauzy}
\centering
\includegraphics[width=0.7\textwidth]{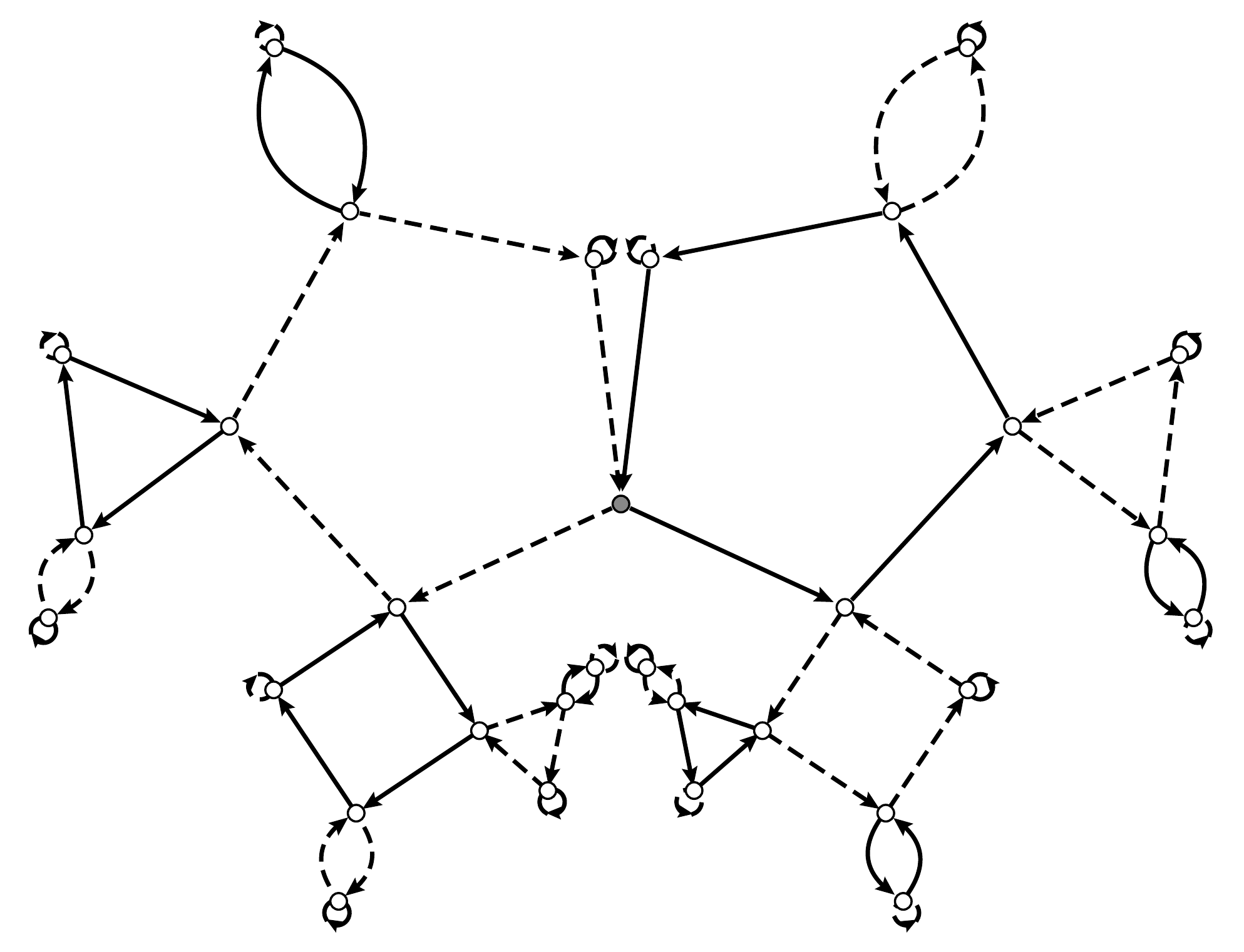}
\caption{The Rauzy class of $\pi$, which is marked with a grey dot. Solid arrows represent top operations, while dashed arrows represent bottom operations. Observe that the first cycle traverses the entire right ``half'' of the Rauzy class, while the second cycle traverses the entire left ``half''.}
\label{fig:rauzyclass}
\end{figure}

The induction matrices $M_1$, $M_2$ and $M_3$ obtained from these three cycles are, respectively, the following:
{\small \[
	M_1 = \begin{pmatrix}
		1 & 0 & 0 & 0 & 0 & 1 \\
		0 & 2 & 10 & 10 & 5 & 1 \\
		0 & 7 & 54 & 54 & 28 & 1 \\
		0 & 22 & 156 & 161 & 84 & 1 \\
		0 & 42 & 298 & 306 & 162 & 1 \\
		0 & 26 & 185 & 190 & 100 & 1
	\end{pmatrix}, \quad
	M_2 = \begin{pmatrix}
		1 & 100 & 190 & 185 & 26 & 0 \\
		1 & 162 & 306 & 298 & 42 & 0 \\
		1 & 84 & 161 & 156 & 22 & 0 \\
		1 & 28 & 54 & 54 & 7 & 0 \\
		1 & 5 & 10 & 10 & 2 & 0 \\
		1 & 0 & 0 & 0 & 0 & 1
	\end{pmatrix}
\]
\[
	M_3 = \begin{pmatrix}
		1 & 0 & n & 0 & 0 & 0 \\
		1 & 1 & 2n & 0 & 0 & 0 \\
		1 & 0 & 1+n & 0 & 0 & 0 \\
		1 & 0 & 0 & 1 & 0 & 0 \\
		1 & 0 & 0 & 0 & 1 & 0 \\
		1 & 0 & 0 & 0 & 0 & 1
	\end{pmatrix}.
\]}

Let $M = M_1M_3M_2$. We will show that $M$ has a non-real expanding eigenvalue.

A straightforward computation shows that its characteristic polynomial is given by
\begin{align*}
	p(t) &= 1 - (196351 + 51729 n)t + (740715 + 183764 n)t^2 - (1092962 +
	 269314 n) t^3 \\
	 &\quad + (740715 + 183764 n)t^4 - (196351 + 51729 n)t^5 + t^6.
\end{align*}

Let $\alpha^{-1}$ be the Perron--Frobenius root of $p$ and $s = \alpha + \alpha^{-1}$. Let $\beta$, $\beta^{-1}$, $\gamma$, $\gamma^{-1}$ be the other four roots of $p$ and $u = \beta + \beta^{-1}$, $v = \gamma + \gamma^{-1}$. By expanding the equality $p(t) = (t - \alpha)(t - \alpha^{-1})(t - \beta)(t - \beta^{-1})(t - \gamma)(t - \gamma^{-1})$, we obtain that:
\begin{itemize}
	\item $s + u + v = 196351 + 51729 n$;
	\item $3 + s u + u v + v s = 740715 + 183764 n$;
	\item $s u v + 2(s + u + v) = 1092962 +
		 269314 n$.
\end{itemize}
This can be reduced to
\begin{itemize}
	\item $s + u + v = 196351 + 51729 n$;
	\item $s u + u v + v s = 740712 + 183764 n$;
	\item $s u v = 700260 + 165856 n$.
\end{itemize}
Therefore, we have that
\begin{align*}
	q(t) &= (t - s)(t - u)(t - v) \\
	&= -(700260 + 165856 n) + (740712 + 183764 n)t - (196351 + 51729 n)t^2 + t^3.
\end{align*}
The discriminant of the cubic polynomial $q$ is negative for all $n \geq 0$, which implies that it has one real root and two non-real conjugate roots. Since $s$ is real, we obtain that $u$ and $v$ are non-real and satisfy $u = \bar{v}$. Therefore, $\beta$ is non-real. Moreover, if $|\beta| = 1$, then $\beta^{-1} = \bar{\beta}$ and $u = \beta + \bar{\beta}$ would be real. We obtain (without loss of generality) that $|\beta| > 1$. Finally, $p$ is an irreducible polynomial if $n \mod 3 = 2$. Indeed, its modulus-three reduction is $1 + 2 t + t^2 + 2 t^3 + t^4 + 2 t^5 + t^6$ in such case, which is readily seen to be irreducible over $\mathbb{F}_3$. We obtain that $\beta$ and $\alpha^{-1}$ are Galois-conjugates. By \cite[Lemma 7.9]{CGM}, we conclude that $\beta/|\beta|$ is not a root of unity.

\section*{Appendix}

\begingroup
\setcounter{thm}{0}
\renewcommand\thethm{A.\arabic{thm}}

In this section we detail the computation of the set $\Psi$ for the Arnoux--Yoccoz fractals. These fractals are extensively studied in Section 8 of \cite{CGM}.

For any $a \in \A$, $(p,c,s) \in \bA_a$ and $\tau \in \SS^1$ it is possible to compute $v_{a,(p,c,s)}^{(n)}(\tau)$ numerically by using a dynamic programming approach. This fact and the next lemma allow to compute the first coordinates of the representation of an extreme point.

\begin{lem}\label{lem:computation}
Let $a \in \A$, $\tau \in \SS^1$ and $(p,c,s), (\bar p, \bar c, \bar s) \in \bA_a$. There exists $C > 0$ such that for any $n \geq 1$ one has that
$$v_{a,(p,c,s)}(\tau) - v_{a,(\bar p, \bar c, \bar s)}(\tau) \leq v_{a,(p,c,s)}^{(n)}(\tau) - v_{a,(\bar p, \bar c, \bar s)}^{(n)}(\tau) + C|\beta|^{-n}.$$
Therefore, if $v_{a,(p,c,s)}^{(n)}(\tau) < v_{a,(\bar p, \bar c, \bar s)}^{(n)}(\tau) - C|\beta|^{-n}$, then $v_{a,(p,c,s)}(\tau) < v_{a,(\bar p, \bar c, \bar s)}(\tau)$.
\end{lem}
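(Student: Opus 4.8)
The plan is to deduce this directly from Lemma~\ref{lem:converg-exponencial}, which says that the truncated quantities $v^{(n)}_{a,(p,c,s)}$ decrease to $v_{a,(p,c,s)}$ at a uniform exponential rate. The only real content is to combine the two one-sided estimates furnished by that lemma in the right way so as to obtain the stated comparison between the labels $(p,c,s)$ and $(\bar p,\bar c,\bar s)$.

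First I would apply Lemma~\ref{lem:converg-exponencial} to the label $(p,c,s)$, using only the left half of its double inequality, to get $v_{a,(p,c,s)}(\tau) \leq v^{(n)}_{a,(p,c,s)}(\tau)$ for all $\tau \in \SS^1$ and $n \geq 1$. Then I would apply the same lemma to the label $(\bar p,\bar c,\bar s)$, this time using the right half, to get $v^{(n)}_{a,(\bar p,\bar c,\bar s)}(\tau) - v_{a,(\bar p,\bar c,\bar s)}(\tau) \leq C|\beta|^{-n}$, that is, $-v_{a,(\bar p,\bar c,\bar s)}(\tau) \leq -v^{(n)}_{a,(\bar p,\bar c,\bar s)}(\tau) + C|\beta|^{-n}$, where $C>0$ is the constant provided by Lemma~\ref{lem:converg-exponencial} (which is uniform in $a$, the label and $n$, so using a single $C$ is legitimate). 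Adding these two inequalities yields
$$v_{a,(p,c,s)}(\tau) - v_{a,(\bar p,\bar c,\bar s)}(\tau) \leq v^{(n)}_{a,(p,c,s)}(\tau) - v^{(n)}_{a,(\bar p,\bar c,\bar s)}(\tau) + C|\beta|^{-n},$$
which is the first assertion. For the second assertion I would simply observe that the hypothesis $v^{(n)}_{a,(p,c,s)}(\tau) < v^{(n)}_{a,(\bar p,\bar c,\bar s)}(\tau) - C|\beta|^{-n}$ makes the right-hand side of the displayed inequality strictly negative, so $v_{a,(p,c,s)}(\tau) < v_{a,(\bar p,\bar c,\bar s)}(\tau)$.

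I do not expect any genuine obstacle here. The one point to be careful about is to match the direction of each one-sided bound coming from Lemma~\ref{lem:converg-exponencial} to the side of the target inequality on which it is needed, so that the two error terms of size $C|\beta|^{-n}$ accumulate with the correct sign rather than cancelling the estimate; beyond that, the argument is a one-line combination of already established facts.
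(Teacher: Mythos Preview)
Your proof is correct and matches the paper's own argument essentially line for line: both use the left half of Lemma~\ref{lem:converg-exponencial} for $(p,c,s)$ and the right half for $(\bar p,\bar c,\bar s)$, then add. The paper's proof does not even spell out the ``Therefore'' part, which you handle correctly as an immediate consequence.
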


\begin{proof}
We have that $v_{a,(p,c,s)}(\tau) \leq v_{a,(p,c,s)}^{(n)}(\tau)$. Moreover, by Lemma \ref{lem:converg-exponencial}, we have that $v^{(n)}_{a,(\bar p,\bar c,\bar s)}(\tau) - v_{a,(\bar p,\bar c,\bar s)}(\tau) \le C |\beta|^{-n}$ for some $C > 0$. By adding both inequalities we obtain the desired result.
\end{proof}

The optimal constant $C > 0$ of the previous lemma is $\max\{-v_a(\tau); a \in \A, \tau \in \SS^1\}$. Any larger constant is valid as well, so we may choose any $C > 0$ such that $|z| \leq C$ for every $z \in \F_a$ and $a \in \A$. A simple choice is $C = \sum_{m \geq 1} |\beta|^{-m} |\Gamma(p)|$, where $(p,c,s) \in \bA$ is chosen so that $|\Gamma(p)| \geq |\Gamma(\bar p)|$ for every $(\bar p, \bar c, \bar s) \in \bA$.

For the case of the Arnoux--Yoccoz fractals, the prefix $p = 2$ satisfies the previous condition, so $|\Gamma(p)| = |\beta|$ and we choose $C = \frac{|\beta|}{|\beta| - 1} \approx 3.807$.

The strategy to compute $\Psi$ is the following: first notice that, since $\sigma(8) = 2$, one has that $\Psi_8 = \varnothing$. We will then assume that $a \in \{1, 2, 4, 5, 7\}$. Lemma \ref{lem:computation} allows us to know in which subfractal is the minimum attained. By using a binary search approach, we can obtain sufficiently good bounds for exactly two distinct directions in $\Psi_a$. The next lemma shows that these are the only elements of $\Psi_a$.

\begin{lem}
	Assume that $\sigma(a) = bc$. One has that $|\Psi_a| = 2$ for every $a \in \A$.
\end{lem}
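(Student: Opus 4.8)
The plan is to first use the substitutive self-similarity of the fractals to rewrite $\Psi_a$ as the zero set of one explicit continuous function on $\SS^1$, and then to close the argument with a finite, rigorous (computer-assisted) evaluation. Write $\sigma(a) = bc$ with $b, c \in \A$, so that $\bA_a$ consists exactly of the two labels $L_1 = (\varepsilon, b, c)$ and $L_2 = (b, c, \varepsilon)$; unfolding the definition of $\z_a$ on $\S_{a, L_1}$ and $\S_{a, L_2}$ (and using that $S$ maps $\S_{a, L_1}$ onto $\S_b$ and $\S_{a, L_2}$ onto $\S_c$) yields $\F_{a, L_1} = \beta^{-1}\F_b$ and $\F_{a, L_2} = \beta^{-1}\bigl(\Gamma(b) + \F_c\bigr)$. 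Substituting $\tau = \beta_0\sigma$ and using $\tau\beta^{-1} = |\beta|^{-1}\sigma$ we get
\[
	v_{a, L_1}(\beta_0\sigma) = |\beta|^{-1}v_b(\sigma), \qquad v_{a, L_2}(\beta_0\sigma) = |\beta|^{-1}\bigl(\Re(\sigma\Gamma(b)) + v_c(\sigma)\bigr),
\]
so that, setting $d_a(\sigma) = v_b(\sigma) - v_c(\sigma) - \Re(\sigma\Gamma(b))$ (continuous on $\SS^1$ by Lemma \ref{lem:continuity of v_a}), one has $\Psi_a = \beta_0\,d_a^{-1}(0)$ and, by Lemma \ref{lem:itm}, $J_{a, L_1} = \beta_0\,d_a^{-1}\bigl((-\infty, 0]\bigr)$, $J_{a, L_2} = \beta_0\,d_a^{-1}\bigl([0, +\infty)\bigr)$. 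Since Lemma \ref{lem:itm} also says that $J_{a, L_1}$ and $J_{a, L_2}$ cover $\SS^1$ with disjoint interiors, $\Psi_a$ is precisely their common boundary; therefore $|\Psi_a| = 2$ is equivalent to the statement that $d_a^{-1}\bigl((-\infty, 0]\bigr)$ is a single nondegenerate closed arc, i.e.\ that $d_a$ takes both signs and changes sign exactly twice on $\SS^1$.

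By Lemma \ref{lem:psi finito} the set $d_a^{-1}(0)$ is finite, so $\SS^1 \setminus d_a^{-1}(0)$ is a finite union of open arcs on each of which $d_a$ has constant sign. Consequently it suffices to certify the whole sign pattern of $d_a$: that $d_a$ is negative on one arc, positive on the complementary arc, and vanishes exactly at the two shared endpoints (ruling out both additional crossings and ``bouncing'' zeros where $d_a$ touches $0$ without changing sign). This is carried out numerically with rigorous error control: evaluate $v_b^{(n)}$ and $v_c^{(n)}$ on a fine grid of directions by the dynamic-programming recursion, bound the truncation by $0 \le v_b^{(n)} - v_b \le C|\beta|^{-n}$ and $0 \le v_c^{(n)} - v_c \le C|\beta|^{-n}$ using Lemma \ref{lem:converg-exponencial}, and apply Lemma \ref{lem:computation} to certify, at each grid point, which of $\F_{a, L_1}$, $\F_{a, L_2}$ realizes the minimum; together with a Lipschitz bound for $v_b$ and $v_c$ and the finiteness just recalled, this determines the sign of $d_a$ at every direction and exhibits $d_a^{-1}\bigl((-\infty,0]\bigr)$ as a single nondegenerate arc. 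Equivalently, one may iterate the map $H$ explicitly; exactly as in the computation of $\Lambda_H$ this stabilizes after $30$ steps and directly displays each $J_{a, L_1}$ and $J_{a, L_2}$ as a single arc.

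The verification is run for the representatives $a \in \{1, 2, 4, 5, 7\}$; the cases $a \in \{3, 6\}$ follow at once because $\F_2 = \F_3$, $\F_5 = \F_6$ and $\F_8 = \F_9$ force $d_3 = d_2$ and $d_6 = d_5$, and for $a \in \{8, 9\}$ the hypothesis $\sigma(a) = bc$ fails (there $\bA_a$ is a singleton and $\Psi_a = \varnothing$). I expect the quantitative certification of the sign pattern — in particular the bookkeeping of the error bounds on the two short arcs surrounding the zeros, where $d_a$ is small and Lemma \ref{lem:computation} is informative only for $n$ large — to be the main obstacle; the self-similar reduction of the first paragraph and the finiteness input of Lemma \ref{lem:psi finito} (together with the non-degeneracy of the zeros provided by Lemma \ref{lem:central}) are already available.
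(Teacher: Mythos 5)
Your self-similar unfolding $\F_{a,(\varepsilon,b,c)} = \beta^{-1}\F_b$, $\F_{a,(b,c,\varepsilon)} = \beta^{-1}(\Gamma_b + \F_c)$ and the resulting reformulation $\Psi_a = \beta_0\,d_a^{-1}(0)$ are correct, but they stop exactly where the difficulty starts. You must still show that $d_a$ changes sign exactly twice, and you defer this to a certified numerical sign computation that you yourself flag as the main obstacle: near a zero of $d_a$, the truncation bound $C|\beta|^{-n}$ together with a Lipschitz bound cannot, by themselves, exclude nearby or tangential zeros without an explicit, uniform quantitative version of Lemma~\ref{lem:central}, and you provide none. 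Your ``equivalent'' alternative of iterating $H$ is circular: computing $\Lambda_H$ presupposes knowledge of the partition sets $J_{a,(p,c,s)}$, which is precisely what the sign pattern of $d_a$ encodes. As written, the upper bound $|\Psi_a| \le 2$ is a plan, not a proof.

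The step you are missing is the use of Corollary~8.7 of \cite{CGM}: each $\F_a$, and each of its two subfractals, is the closure of the Jordan interior of a Jordan curve. The paper's proof of $|\Psi_a| \le 2$ is purely topological and requires no further computation: if $\tau_1, \tau_2, \tau_3 \in \Psi_a$ were distinct, the u.r.p.\ gives, for each $i$, a pair of distinct extreme points $z_i \in \F_{a,(\varepsilon,b,c)}$ and $z_i' \in \F_{a,(b,c,\varepsilon)}$; a curve through $z_1, z_2, z_3$ lying otherwise in the interior of $\F_{a,(\varepsilon,b,c)}$ is disjoint from $\F_{a,(b,c,\varepsilon)}$ (Lemma~8.6 of \cite{CGM}), and cutting the triangle whose boundary carries the six points along this curve produces components each meeting at most two of its three sides, contradicting that $\F_{a,(b,c,\varepsilon)}$ must meet all three. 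With this a priori bound in hand, numerics are needed only for the complementary inequality $|\Psi_a| \ge 2$, which the discussion preceding the lemma already settles via Lemma~\ref{lem:computation}.
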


\begin{proof}
	By the previous discussion, $|\Psi_a| \geq 2$. We will show that $|\Psi_a| \leq 2$.

	By Corollary 8.7 and Lemma 8.8 in \cite{CGM}, we know that $T$ has the u.r.p.\ for $\beta$ and that each $\F_a$ is the closure of the Jordan interior of a Jordan curve $\CC_a$.
	
	Since $\sigma(a) = bc$, we have that $\F_a = \F_{a,(\varepsilon,b,c)} \cup \F_{a,(b,c,\varepsilon)}$. By Lemma 8.6 in \cite{CGM}, we have that the interiors of $\F_{a,(\varepsilon,b,c)}$ and $\F_{a,(b,c,\varepsilon)}$ are disjoint. Assume by contradiction that $\tau_1, \tau_2, \tau_3 \in \SS^1$ are distinct elements of $\Psi_a$. Let $z_1, z_2, z_3 \in \F_{a,(\varepsilon,b,c)}$ and $z_1', z_2', z_3' \in \F_{a,(b,c,\varepsilon)}$ be extreme points for the directions $\tau_1, \tau_2$ and $\tau_3$, respectively. By the u.r.p., $z_1 \neq z_1'$, $z_2 \neq z_2'$ and $z_3 \neq z_3'$.
	
	Since $\F_{a,(\varepsilon,b,c)}$ is the closure of the Jordan interior of a Jordan curve, it is homeomorphic to a closed disc, so there exists a curve $\kappa\colon [0, 1] \to \F_{a,(\varepsilon,b,c)}$ such that $\kappa(0) = z_1$, $\kappa(1/2) = z_2$, $\kappa(1) = z_3$ and $\kappa(t)$ lies in the interior of $\F_{a,(\varepsilon,b,c)}$ for every $t \notin \{0, 1/2, 1\}$. We have that $\kappa(t) \notin\F_{a,(b,c,\varepsilon)}$ for every $t \in [0, 1]$.
	
Let $\Delta$ be the unique 2-simplex with $z_1, z_1', z_2, z_2', z_3, z_3' \in \partial\Delta$. Note that it is not possible that $z_1 = z_2 = z_3$ or $z_1' = z_2' = z_3'$, so $\Delta$ is indeed a non-degenerate 2-simplex. By definition, one has that $\kappa(t) \notin \partial\Delta$ if $\kappa(t) \notin \{0, 1/2, 1\}$. Therefore, $\Delta \setminus \kappa([0, 1])$ has at least two arc-connected components, one of which contains $\F_{a,(b,c,\varepsilon)}$. Each connected component intersects at most two of the three line segments in $\partial \Delta$, which is a contradiction since $\F_{a,(b,c,\varepsilon)}$ intersects the three lines.
\end{proof}

Lemma \ref{lem:computation} then allows to compute the first coordinates of the extreme points for the upper and lower bound for the directions in $\Psi_a$. We observe that, in most cases, these coordinates are equal after a few steps, even if they start in different subfractals. This fact produces an equation for some elements of $\Psi_a$.

For the other cases, the coordinates do not appear to become equal after any numbers of steps. For these directions we can only obtain bounds and we are not able to compute the exact values. Nevertheless, the bounds are good enough to compute the exact minimal components of $H$.
\bigbreak
\endgroup

\textbf{Acknowledgement:} The first author is grateful to the MathAmsud grant DCS-2017.
The second and third authors are grateful to CMM-Basal grant PFB-03.

\sloppy\printbibliography

\end{document}